\documentclass[11pt,reqno]{amsart}%
\usepackage[a4paper,margin=3cm]{geometry}%
\usepackage[pagebackref]{hyperref}%
\usepackage[T1]{fontenc}%
\usepackage[utf8]{inputenc}%
\usepackage{lmodern,mathtools}
\usepackage{amsmath,amsfonts,amssymb,amsthm}
\usepackage{amsmath}
\usepackage{tikz}
\usepackage{dsfont}

\title[\tiny{Improved well-posedness of the flow of differentiation of roots of polynomials}]{Improved well-posedness for the limit flow of differentiation of roots of polynomials}

\author{Charles Bertucci}
\address[CB]{CEREMADE, CNRS, Université Paris Dauphine-PSL, UMR 7534, 75016 Paris, France.}
\email{\url{mailto:bertucci@ceremade.dauphine.fr}}
\author{Valentin Pesce}
\address[VP]{CMAP, Ecole Polytechnique, UMR 7641, 91120 Palaiseau, France.}
\email{\url{mailto:valentin.pesce@polytechnique.edu}}

\date{Winter 2026, compiled \today}
\newtheorem{theorem}{Theorem}[section]%
\newtheorem{definition}[theorem]{Definition}
\newtheorem{proposition}[theorem]{Proposition}%
\newtheorem{lemma}[theorem]{Lemma}%
\newtheorem{remark}[theorem]{Remark}%

\let\ensembleNombre\mathbb
\newcommand\tab[1][0.5cm]{\hspace*{#1}}
\newcommand\mes{\mathcal P(\R)}
\newcommand\mesC{\mathcal P(\C)}
\newcommand\mesT{\mathcal P(\mathbb T)}
\newcommand\N{\ensembleNombre{N}}
\newcommand\Z{\ensembleNombre{Z}}

\newcommand\R{\ensembleNombre{R}}
\newcommand\C{\ensembleNombre{C}}
\newcommand\T{\ensembleNombre{T}}
\newcommand\dis{\displaystyle}

\DeclareMathOperator{\leb}{Leb} 
\DeclareMathOperator{\cotan}{cotan} 
\DeclareMathOperator{\divergence}{div} 
\DeclareMathOperator{\convexhull}{Conv} 
\DeclareMathOperator{\arcot}{arcot} 

\makeatletter
\def\@MRExtract#1 #2!{#1}     
\renewcommand{\MR}[1]{
  \xdef\@MRSTRIP{\@MRExtract#1 !}%
  \href{http://www.ams.org/mathscinet-getitem?mr=\@MRSTRIP}{MR-\@MRSTRIP}}
\makeatother

\numberwithin{equation}{section}

\keywords{Random polynomials, Partial differential equation, Viscosity solutions, Mean field limit.}

\begin{document}
\begin{abstract}
In this paper, we study the partial differential equation on the circle that was heuristically obtained by Steinerberg \cite{steinerberger2019nonlocal} on the real line and which represents the evolution of the density of the roots of polynomials under differentiation. After integrating the partial differential equation in question, we observe that it can be treated with the theory of viscosity solutions. This equation at hand is a non linear parabolic integro-differential equation which involves the elliptic operator called the half-Laplacian. Due to the singularity of the equation, we restrict our study to strictly positive initial condition. We obtain a comparison principle for solutions of the primitive equation which yields uniqueness, existence, continuity with respect to initial condition. We also present heuristics to justify that the system of particles indeed approximates the solution of the equation.
\end{abstract}
\maketitle
{\footnotesize\tableofcontents}
\section{Introduction}
\subsection{General introduction}
In this paper we study the limit partial differential equation (PDE) that was heuristically obtained by Steinerberg \cite{steinerberger2019nonlocal} in a dynamical approach to characterize the flow of differentiation. The heuristic is quite difficult to set going but Steinerberg points in his paper some explicit solutions of the PDE which come from orthogonal polynomials families. This motivates the studying of the limit PDE to make rigorous the heuristic of the convergence of particles. 
Moreover this problem has been surprisingly connected to free probability, which motivated several works in this direction \cite{hall2023zeros,kabluchko2022lee,jalowy2025zeros,jalowy2025zeros2}.
\newline
\tab The problem was first studied for random roots of trigonometric polynomials because there is not a loss of roots after differentiation (and so at the level of the PDE in the trigonometric case the mass of a solution is preserved). In this case a connection with free probability was also studied \cite{kabluchko2021repeated}.
In \cite{kiselev2022}, Kiselev and Tan study the limit PDE on the circle and they prove the existence and uniqueness of smooth solutions of the PDE starting from a strictly positive smooth initial condition. They also obtain a notion of convergence of the system of particles towards the strong solution of the PDE under assumptions of smoothness. 
\newline
\newline
\tab We first briefly state already known results about the location of roots of polynomials after differentiation both for the sake completeness and to motivate the problem. We recall the heuristic of the proof of Steinerberg \cite{steinerberger2019nonlocal} to formally obtain the PDE that shall be studied.
\newline
\tab Then, the main goal of this paper is to introduce a new kind of solutions of this PDE in the trigonometric case and to prove existence and uniqueness for this new notion of solutions. Thanks to this approach we could study the PDE for less regular initial conditions in comparison with what was done in \cite{kiselev2022}.
We note that if we integrate the PDE, which means we look at the PDE satisfied by the cumulative distribution function of a solution of the PDE, we obtain a new PDE that can be studied using the theory of viscosity solutions. 
First, this PDE is invariant by translation in space and so the integration of the initial PDE does not depend on the angle of reference we chose to primitive a measure on the circle.
Then, this primitive equation is a parabolic integro-differential partial differential equation.
The integro-differential operator that appears is known as the half Laplacian on the circle and satisfies an elliptic property which hints to look at viscosity solutions of this equation. However, this PDE has a singularity which makes difficult to state properly a comparison principle as we usually do in viscosity theory. To overcome this issue we change the PDE by truncating the singularity. At the level of the original PDE it physically says that we are looking at solution starting from a strictly positive initial condition as in \cite{kiselev2022}. For these truncated primitive equations we get a comparison principle and so uniqueness of the solution of the PDE by using standard arguments in viscosity theory solutions (\cite{crandall1992} for fully non linear PDE of second order and \cite{arisawa2006,arisawa2008,barles2008} for PDE with an integro-differential term). Furthermore we can show that starting from an appropriate initial condition, we can guarantee that truncated PDEs and the original PDE are the same.
\newline
\tab Later we go back to the system of particles that are the roots of trigonometric polynomials. We try to understand the solutions that we just defined as a limit of the system of particles as it was done in a similar approach to the Dyson equation for the eigenvalues of random matrices \cite{bertucci2022spectral,bertucci2025new}. We first prove a discrete comparison principle for the particles. Finally, even if we were not able to provide a complete proof, we explain heuristically how to obtain the viscosity solutions of the PDE as the limit of the system of particles. 
\subsection{Overview of the results of the paper}
The goal of this paper is to study the PDE: \begin{equation}
\label{eq:randompolpdeintro}
\boxed{
\partial_t u+\frac{1}{\pi}\,\partial_\theta\left(\arctan\left(\cfrac{H[u]}{u}\right)\right)=0\,\,\text{in } (0,+\infty)\times\T,}
\end{equation}
where $\T=\R/2\pi\Z$ and $H[u]$ is the periodic Hilbert transform. In \cite{kiselev2022}, it was proved that if the initial condition of this PDE is smooth (in $H^s$ for $s>3/2$) and strictly positive there exists a unique strong solution of this PDE and some properties as regularization, long time behaviour of these solutions have been studied.
\newline
The PDE \eqref{eq:randompolpdeintro} is very singular and having a "good" notion of solutions to obtain uniqueness or existence is not clear at all. Moreover a regularizing property of \eqref{eq:randompolpdeintro} is not expected in general. Indeed, as we shall mention from the heuristic derivation of the PDE it is expected that if the initial condition of \eqref{eq:randompolpdeintro} is the Dirac mass $\delta_0$ then the solution of the PDE will contain a Dirac mass for $t\in[0,1]$ at least.
\newline
We introduce a new notion of solutions to study the PDE \eqref{eq:randompolpdeintro} using the theory of viscosity solutions. Thanks to this approach one can obtain existence and uniqueness of solutions of \eqref{eq:randompolpdeintro} without assumptions of regularity of the initial condition but still with hypothesis of strict positivity. We integrate the PDE \eqref{eq:randompolpdeintro} to obtain a PDE satisfied by the primitive of a solution of \eqref{eq:randompolpdeintro}: 
\begin{equation}
\boxed{
\partial_t F(t,\theta)+\cfrac{1}{\pi}\left(\arctan\left( \cfrac{A_0[F(t,.)](\theta)}{(\partial_\theta F(t,\theta))_+}\right)+\cfrac{\pi}{2}\right)=0\,\, \text{in } (0,+\infty)\times \R,
\label{eqPrimitiveintro}}
\end{equation}
where $A_0$ is the so-called periodic half Laplacian.
This PDE is well settled in the viscosity solutions theory thanks to the elliptic property of the half Laplacian. Thanks to this notion of solution we can now study solutions of \eqref{eq:randompolpdeintro} starting for instance from $\delta_0(dx)$.
\newline
However, the singularity of the PDE makes difficult to properly establish comparison principles for this PDE. So we consider truncated modification of \eqref{eqPrimitiveintro} for which the comparison principle holds:
\begin{align}
\boxed{
\tag{$E_m$}
\partial_t F(t,\theta)+\cfrac{1}{\pi}\left(\arctan\left( \cfrac{A_0[F(t,.)](\theta)}{\max((\partial_\theta F(t,\theta))_+,m)}\right)+\cfrac{\pi}{2}\right)=0\,\, \text{in } (0,+\infty)\times \R,}
\label{eqPrimitiveminintro}
\end{align}
where $m>0$ is a fixed real number.
\newline
The consequence of the truncation of the PDE \eqref{eqPrimitiveintro} is that we have to consider solutions of \eqref{eq:randompolpdeintro} starting from strictly positive initial condition (or equivalently solutions of \eqref{eqPrimitiveintro} with strictly non-decreasing initial condition). 
For a suitable notion of viscosity solutions introduced in Section \ref{def:viscoeq} we prove existence and uniqueness of strictly non-decreasing solutions. 
\begin{theorem}
\label{thm:existenceintro}
Let $\mu\in\mesT$ such that $F_\mu$, the cumulative distribution function of $\mu$, satisfies a condition of strict monotony $\eqref{hypothesis min}$ for a $m>0$. Then there exists a viscosity solution of \eqref{eqPrimitiveintro} with initial condition $F_\mu$ satisfying \eqref{hypothesis min} for all time. 
\end{theorem}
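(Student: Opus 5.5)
The plan is to build a solution of the truncated equation \eqref{eqPrimitiveminintro} by Perron's method, and then show that it never meets the truncation, so that it also solves \eqref{eqPrimitiveintro}.

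\emph{Step 1: existence for $(E_m)$.} For fixed $m>0$ the Hamiltonian
\[
(p,\ell)\mapsto \frac1\pi\Bigl(\arctan\bigl(\ell/\max(p_+,m)\bigr)+\frac\pi2\Bigr)
\]
has three useful properties:
\begin{itemize}
\item it is Lipschitz and bounded;
\item it is nondecreasing in $\ell=A_0[F](\theta)$, which is the degenerate ellipticity coming from the half Laplacian;
\item it is nonincreasing in $p$ wherever $\ell\ge 0$. Monotonicity in $p$ is not actually needed, since the equation is first order in $\partial_\theta F$.
\end{itemize}
These are exactly the assumptions under which the comparison principle for $(E_m)$ proved later in the paper holds. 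I would take that comparison principle for periodic-shift data $F(\theta+2\pi)=F(\theta)+1$ as available.

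Next I need barriers. The functions $F_\mu(\theta)\pm Ct$ are sub- and supersolutions, because the Hamiltonian lies in $[0,1]$. Since $F_\mu$ may be discontinuous, I would either regularize the initial datum or use sup/inf convolutions in $\theta$ to obtain barriers that attain the initial datum continuously. Perron's method, in the Barles--Imbert form for nonlocal operators, then produces a discontinuous viscosity solution sandwiched between the barriers. Comparison forces $F^*=F_*$, so the solution is continuous and attains $F_\mu$.

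\emph{Step 2: strict monotonicity is preserved.} Hypothesis \eqref{hypothesis min} should say that $F(\theta+h)-F(\theta)\ge mh$ for $h\ge0$. To show this persists in time, fix $h>0$ and set $G(t,\theta)=F(t,\theta+h)-mh$. Because the equation is translation invariant and $A_0$ kills constants, $G$ is again a solution of $(E_m)$. At time $0$ we have $G(0,\cdot)\ge F(0,\cdot)$, so comparison gives $G\ge F$ for all $t$, that is $F(t,\theta+h)-F(t,\theta)\ge mh$.

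Consequently, at any test function touching $F$ from above or below we have $\partial_\theta\phi\ge m$. Hence $\max((\partial_\theta\phi)_+,m)=(\partial_\theta\phi)_+$, and $F$ is a viscosity solution of \eqref{eqPrimitiveintro}. The claim that $\partial_\theta\phi\ge m$ needs checking for the one-sided test functions; it works because $F-mh$-type lower bounds on difference quotients pass to smooth touching functions.

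\emph{Main obstacle.} The main obstacle is Step 1, namely the comparison principle for $(E_m)$ together with the handling of a possibly discontinuous $F_\mu$. The nonlocal term $A_0[F](\theta)$ behaves like a singular integral $\int \frac{F(\theta+z)-F(\theta)-\dots}{\sin^2(z/2)}\,dz$, which has to be split into a small-ball part evaluated on the test function and an outer part evaluated on $F$. Doubling variables requires controlling the outer part uniformly. If $F_\mu$ has jumps, comparison may fail at $t=0$. I would therefore first approximate $\mu$ by smooth strictly positive densities satisfying the same bound $m$, solve for each, and pass to the limit using the stability of viscosity solutions and the uniform-in-$n$ monotonicity bound of Step 2.
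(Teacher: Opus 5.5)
Your Step 2 is the paper's monotone principle (Proposition~\ref{propmonotoneprinciple}), and your removal of the truncation is Remark~\ref{remark fonction test}. Replacing the paper's approximants (Kiselev--Tan smooth strong solutions with regularized data) by Perron's method is a legitimate alternative when $F_\mu$ is \emph{continuous}. The price is carrying out the Barles--Imbert Perron construction in the class $\mathcal F_{t,2\pi}$.

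The gap is the discontinuous case, which the theorem explicitly covers. $(H_m)$ only says that $\mu$ dominates $m$ times Lebesgue measure, so $\mu$ may have atoms and $F_\mu$ may jump.
\begin{itemize}
\item Your claim that ``comparison forces $F^*=F_*$, so the solution is continuous and attains $F_\mu$'' is then false: a continuous function cannot have a discontinuous trace. At $t=0$ you only know $F^*(0,\cdot)\le (F_\mu)^*$ and $F_*(0,\cdot)\ge (F_\mu)_*$. These are not ordered at atoms, so Theorem~\ref{thm:comparison principle} cannot be applied to $F^*$ and $F_*$.
\item Your fallback hits the same wall. After approximating, stability gives a subsolution $\overline F=\limsup^* F^n$ and a supersolution $\underline F=\liminf_* F^n$. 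A solution in the paper's sense needs a usc subsolution whose lower envelope is a supersolution, i.e.\ you must prove $(\overline F)_*=\underline F$. The uniform bound $(H_m)$ says nothing about how $\overline F$ and $\underline F$ compare.
\end{itemize}

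The missing idea is a shifted comparison that uses the monotonicity of $F_\mu$ and the translation invariance of the equation.
\begin{itemize}
\item First pin down the initial layer. Since the Hamiltonian takes values in $[0,1]$, you have $F^n(0,\cdot)-t\le F^n\le F^n(0,\cdot)$. With mollifiers supported in $[-1/n,1/n]$, this gives $\overline F(0,\cdot)\le (F_\mu)^*=F_\mu$ and $\underline F(0,\cdot)\ge (F_\mu)_*$.
\item Then, for every $\gamma>0$, $\overline F(0,x)\le F_\mu(x)\le (F_\mu)_*(x+\gamma)\le \underline F(0,x+\gamma)$. Since $\underline F(\cdot,\cdot+\gamma)$ is still a supersolution, comparison yields $\overline F(t,x)\le\underline F(t,x+\gamma)$.
\item Evaluating at $x-\gamma$ and letting $\gamma\downarrow 0$ gives $(\overline F)_*\le\underline F$. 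The reverse inequality $\underline F\le(\overline F)_*$ holds because $\underline F\le\overline F$ and $\underline F$ is lsc.
\end{itemize}
Hence $(\overline F)_*=\underline F$ is a supersolution, and $\overline F$ is a viscosity solution with initial condition $F_\mu$ in the weak sense of the definition. This is exactly how the paper closes the second part of Proposition~\ref{prop:existence}.

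The same argument applied to the Perron function $W$ would repair your Step 1 directly, giving $(W^*)_*=W_*$ instead of continuity. Use the barriers $F_\mu-t\le W\le F_\mu$, noting that the upper barrier is a supersolution only through its lsc envelope $(F_\mu)_*$.
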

\begin{theorem}
\label{thm:uniquenessintro}
Let $\mu\in\mesT$ such that $F_\mu$ satisfies $\eqref{hypothesis min}$ for a $m>0$. Then there exists at most one viscosity solution of \eqref{eqPrimitiveintro} which satisfies $(H_{m'})$ for $0<m'\le m$ with initial condition $F_\mu$.
\end{theorem}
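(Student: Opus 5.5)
The plan is to reduce uniqueness for \eqref{eqPrimitiveintro} to a comparison principle for the truncated equation \eqref{eqPrimitiveminintro}. First I would check that a viscosity solution $F$ of \eqref{eqPrimitiveintro} satisfying $(H_{m'})$ is also a viscosity solution of $(E_{m'})$. Condition $(H_{m'})$ means that $F(t,\theta')-F(t,\theta)\ge m'(\theta'-\theta)$ for $\theta'\ge\theta$. If a smooth test function $\phi$ touches $F$ from above or from below at $(t_0,\theta_0)$, this lower bound on increments forces $\partial_\theta\phi(t_0,\theta_0)\ge m'$. Hence $\max((\partial_\theta\phi)_+,m')=(\partial_\theta\phi)_+$, and the two Hamiltonians coincide at every touching point. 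The nonlocal term $A_0[\cdot]$ is evaluated through the usual splitting: test function near the point, $F$ itself away from it. It is therefore unaffected by the truncation, and the sub- and supersolution inequalities transfer verbatim. So two solutions $F_1,F_2$ with the same initial datum $F_\mu$, satisfying $(H_{m_1'})$ and $(H_{m_2'})$, are both solutions of $(E_{m''})$ with $m''=\min(m_1',m_2')>0$.

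Next I would prove a comparison principle for $(E_{m''})$: if $F_1$ is a bounded-increment subsolution and $F_2$ a supersolution with $F_1(0,\cdot)\le F_2(0,\cdot)$, then $F_1\le F_2$. Applying it both ways gives $F_1=F_2$. The relevant functions have the form $\theta\mapsto\theta/2\pi+\text{periodic}$, so $F_1-F_2$ is $2\pi$-periodic in $\theta$ and maxima over $\T$ exist. I would argue by contradiction, assuming $\sup(F_1-F_2-\eta t)>0$ on $[0,T]\times\T$, and double the variables with a penalization
\[
F_1(t,\theta)-F_2(s,\vartheta)-\frac{|\theta-\vartheta|^2}{2\varepsilon}-\frac{|t-s|^2}{2\delta}-\eta t .
\]
Using the nonlocal viscosity framework of \cite{barles2008,arisawa2006}, I would write the sub- and supersolution inequalities at the maximum points with the same gradient $p=(\theta-\vartheta)/\varepsilon$ in both. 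The Hamiltonian $H(p,a)=\frac1\pi(\arctan(a/\max(p_+,m''))+\frac\pi2)$ is increasing in the nonlocal argument $a$. It is Lipschitz in $p$ because $\max(p_+,m'')\ge m''$, and it is Lipschitz in $a$ with constant $1/(\pi m'')$; this is exactly where the truncation is used. Since the gradients are equal, the only term to control is the difference of the nonlocal terms $A_0[F_1](\theta)-A_0[F_2](\vartheta)$.

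The main obstacle is this nonlocal term. The half-Laplacian $A_0[F](\theta)$ is essentially a principal-value integral of $(F(\theta+z)-F(\theta))\cot(z/2)$-type kernels. Its ellipticity gives: at a maximum point of $F_1(\cdot)-F_2(\cdot+\vartheta-\theta)$, the translated difference is maximal, so the far part of $A_0[F_1](\theta)-A_0[F_2](\vartheta)\le0$. I would split the integral into $|z|<r$ and $|z|\ge r$, using the penalization's second derivative $1/\varepsilon$ on the inner part, which contributes $O(r/\varepsilon)$-type error since the kernel is of order one, and using the maximum property on the outer part. Letting $r\to0$ first, then $\varepsilon,\delta\to0$, yields $\eta\le 0$ up to vanishing errors, which is a contradiction. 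Two further points need attention: the initial condition must be attained uniformly, which I would take from continuity built into the definition in Section \ref{def:viscoeq}, and the kernel is only weakly singular. I expect the careful handling of the principal value near $z=0$ in the doubled-variable inequality to be the delicate step.
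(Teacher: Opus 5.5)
Your architecture matches the paper's. First, $(H_{m'})$ forces $\partial_\theta\phi\ge m'$ at every touching point, so solutions of \eqref{eqPrimitiveintro} with $(H_{m'})$ solve the truncated equation; taking $m''=\min(m_1',m_2')$ is a clean way to handle two constants. Then comparison for the truncated equation follows by doubling variables, with the truncation supplying the Lipschitz constant $1/(\pi m'')$ in the nonlocal argument.

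\textbf{The initial condition (genuine gap).} The definition builds in no continuity at $t=0$. It only requires $F^*(0,\cdot)\le (F_\mu)^*$ and $F_*(0,\cdot)\ge (F_\mu)_*$. Moreover, $(H_m)$ allows atoms (e.g.\ $\mu=\frac12\frac{d\theta}{2\pi}+\frac12\delta_0$), which is exactly the non-smooth case the theorem covers. At an atom $x_0$, semicontinuity forces $F(0,x_0)=F_\mu(x_0)$ and $F_*(0,x_0)=F_\mu(x_0^-)$ for \emph{every} solution $F$. So the hypothesis $F_1(0,\cdot)\le (F_2)_*(0,\cdot)$ of your comparison principle fails there, even when $F_1=F_2$, and you cannot apply it both ways.

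The missing idea is a spatial shift, using monotonicity of $F_\mu$ and translation invariance of the equation. For $\varepsilon>0$ you have $F_1(0,x)\le (F_\mu)^*(x)\le (F_\mu)_*(x+\varepsilon)\le (F_2)_*(0,x+\varepsilon)$, and $(t,x)\mapsto (F_2)_*(t,x+\varepsilon)$ is again a supersolution. Comparison then yields $F_1(t,x)\le (F_2)_*(t,x+\varepsilon)\le F_2(t,x+\varepsilon)$. Letting $\varepsilon\downarrow0$ and using upper semicontinuity of $F_2$ gives $F_1\le F_2$, and symmetrically $F_2\le F_1$. This is how the paper proves Proposition~\ref{prop:uniqueness}. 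Inside the comparison proof, pointwise ordering at $t=0$ plus semicontinuity is enough to keep $t^*,s^*>0$; no uniform attainment is needed.

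\textbf{The ellipticity step (wrong kernel and wrong sign).} $A_0$ is not a $\cot$-type integral of first differences. With an odd, sign-changing kernel, maximality of the doubled function gives no sign at all. Use instead
$A_0[f](\theta)=\frac{1}{8\pi}\int_\T\frac{2f(\theta)-f(\theta+z)-f(\theta-z)}{\sin^2(z/2)}\,dz$.
Maximality of the doubled function then gives
$2F_1(t,\theta)-F_1(t,\theta+z)-F_1(t,\theta-z)\ge 2(F_2)_*(s,\vartheta)-(F_2)_*(s,\vartheta+z)-(F_2)_*(s,\vartheta-z)$.
Hence the far part of $A_0[F_1](\theta)-A_0[F_2](\vartheta)$ is $\ge 0$, not $\le 0$ as you wrote.

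This is the direction you need. The two viscosity inequalities combine into $H(p,a_2)-H(p,a_1)\ge\eta$. Since $H$ is increasing in $a$, the contradiction comes from $a_2-a_1\le O(r/\varepsilon)$. With your sign, nothing follows.
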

We also prove a continuity of this notion of solutions with respect to the initial condition. 
\begin{theorem}
\label{thm:stabilityintro}
Fix $m>0$. Let $(\mu_n)_{n\in\N}\in\mesT^\N$ be a family of probabilities measures on $\T$ such that for all $n\ge 0$ $F_{\mu_n}$, the cumulative distribution function of $\mu_n$, satisfies \eqref{hypothesis min}. Assume that $(\mu_n)_{n\in\N}\in\mesT^\N$ converges in law towards $\mu\in\mesT$ and that $\mu$ is absolutely continuous with respect to Lebesgue measure.
Let $(F_n(t,x))_{t\ge 0,x\in\R}$ (resp. $(F_\infty(t,x))_{t\ge 0,x\in\R}$) be a viscosity solution of \eqref{eqPrimitive} with initial condition $F_{\mu_n}$ (resp. $F_\mu$). 
Then we have the following convergence $$\sup_{t\ge 0}||F_n(t,.)-F_\infty(t,.)||_{L^\infty(\R)}\underset{n\to+\infty}{\longrightarrow} 0.$$
\end{theorem}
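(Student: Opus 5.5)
Since the limit $\mu$ is absolutely continuous, $F_\mu$ is continuous. Weak convergence $\mu_n\to\mu$ on $\T$ then gives uniform convergence of the cumulative distribution functions: $\|F_{\mu_n}-F_\mu\|_{L^\infty(\R)}\to 0$. This is Pólya's theorem, which applies because the limit has no atoms. One should check that the normalization of the primitive on $\R$, namely $F(\theta+2\pi)=F(\theta)+1$ with a fixed reference angle, is compatible with this. It is, because the primitive of a measure with continuous CDF is determined up to the reference point, and the equation is translation invariant.

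The plan is then to reduce the whole statement to a comparison principle for the truncated equation \eqref{eqPrimitiveminintro}. That principle should read: if $F$ and $G$ are sub- and supersolutions with $F(0,\cdot)\le G(0,\cdot)$, then $F\le G$ for all times.

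First I would pass the strict-monotonicity hypothesis to the limit. Each $F_{\mu_n}$ satisfies \eqref{hypothesis min} with the same $m$, which I read as $F_{\mu_n}(y)-F_{\mu_n}(x)\ge m(y-x)$ for $x\le y$. Uniform convergence preserves this inequality, so $F_\mu$ satisfies it as well. By the existence and uniqueness results (Theorems \ref{thm:existenceintro} and \ref{thm:uniquenessintro}), $F_n$ and $F_\infty$ are then the unique solutions, and they satisfy $(H_m)$ for all times. On functions with $\partial_\theta F\ge m$ in the viscosity sense, equations \eqref{eqPrimitiveintro} and \eqref{eqPrimitiveminintro} coincide. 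Hence $F_n$ and $F_\infty$ are solutions of the truncated equation with the same parameter $m$.

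Next I would use two invariances of \eqref{eqPrimitiveminintro}. The equation is invariant under addition of constants, because $A_0$ annihilates constants and only $\partial_\theta F$ and $A_0[F]$ appear. Set $\varepsilon_n=\|F_{\mu_n}-F_\mu\|_\infty$. Then $F_\infty\pm\varepsilon_n$ are solutions, and they bracket $F_{\mu_n}$ at time $0$. The comparison principle for \eqref{eqPrimitiveminintro} then yields
$$F_\infty(t,\cdot)-\varepsilon_n\le F_n(t,\cdot)\le F_\infty(t,\cdot)+\varepsilon_n\quad\text{for all }t\ge0.$$
This is a contraction estimate uniform in time, and it gives $\sup_{t\ge0}\|F_n(t)-F_\infty(t)\|_\infty\le\varepsilon_n\to0$.

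The main obstacle is ensuring that the comparison principle really applies to these functions. It must be valid on $[0,\infty)\times\R$ for functions that are periodic up to the linear drift $\theta/2\pi$, and it must hold without a time horizon. The latter follows by applying it on each $[0,T]$, since the resulting bound is independent of $T$. It also requires continuity up to $t=0$. This is where absolute continuity is used again: the $F_n$ attain their initial data continuously, and $F_\infty$ attains a continuous datum, so no boundary-layer issue arises at $t=0$. If the comparison proved earlier in the paper is only stated for data with a common modulus of continuity, I would check that \eqref{hypothesis min}, together with the uniform convergence, provides what is needed.
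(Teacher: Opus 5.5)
Your proposal follows the paper's proof: uniform convergence of $F_{\mu_n}$ to the continuous $F_\mu$ (the paper uses a Dini-type argument where you use P\'olya), passage of $(H_m)$ to the limit, identification of the solutions with solutions of $(E_m)$, and comparison with $F_\infty\pm\varepsilon_n$; like the paper, you implicitly restrict to the class $(H_{m'})$, which is where uniqueness holds. One correction to your last paragraph: the $F_n$ need not attain their data continuously, since $(H_m)$ still allows $\mu_n$ to have atoms, but you do not need this, because the comparison principle only requires the semicontinuous envelopes to be ordered at $t=0$, and $(F_n)_*(0,\cdot)\ge (F_{\mu_n})_*\ge F_\mu-\varepsilon_n$ follows from the continuity of $F_\mu$ alone, which is exactly how the paper obtains the lower bound.
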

\subsection{Structure of the paper}
First, in Section \ref{sect:briefoverviewoftheproblem} we recall some well known results about random polynomials that motivate the problem and the study of the PDE \eqref{eq:randompolpdeintro}.
\newline
Then, in Section \ref{section:studypde} we study the PDE \eqref{eq:randompolpdeintro} by integrating it and introducing a notion of viscosity solutions for this primitive equation. 
\newline
Finally, in Section \ref{section:systemofparticles} we investigate how viscosity solutions of \eqref{eqPrimitiveintro} can be constructed as limits of the system of particles heuristically introduced by Steinerberger in \cite{steinerberger2019nonlocal}. Even if we were not able to completely derive the convergence, we present the partial proof that we were able to find in order to justify this heuristic at the level of viscosity solutions of \eqref{eqPrimitiveintro}.
\section{A brief overview of the problem}
\label{sect:briefoverviewoftheproblem}
\subsection{Location of the roots of the derivative of a polynomial}
The problem of the location of the roots of the derivative of a polynomial knowing the roots of this polynomial have been a very well studied problem in the literature. For instance, see the non exhaustive references \cite{curgus2004contraction,stoyanoff1925theoreme,szeg1939orthogonal} and references therein.
\newline
The most important result on this topic is the so-called Gauss-Lucas theorem. This theorem can be viewed as a generalisation in $\C$ of the Rolle theorem for polynomials. We quickly recall proofs of this result since similar arguments shall be used later on. 
For $P\in\C[X]$, let $\mathcal Z(P)$ be the set of the roots of $P$. We write $\convexhull(A)$ the convex hull of $A\subset \C$ which is the smallest convex of $\C$ that contains $A$.
\begin{theorem}[Gauss-Lucas theorem] 
\label{gauss lucas}
Let $P\in \C[X]$, then $\mathcal Z(P')\subset \convexhull(\mathcal Z(P))$.
\end{theorem}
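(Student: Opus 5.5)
The plan is to use the classical logarithmic-derivative argument. It suffices to treat the case $\deg P \ge 1$ and write $P(z)=c\prod_{k=1}^n (z-z_k)$ with $c\neq 0$ and $z_1,\dots,z_n$ the roots listed with multiplicity; if $\deg P=0$ then $P'=0$ and the statement is vacuous or conventional, so we set it aside. Let $w$ be a root of $P'$. If $w$ is already a root of $P$, then $w\in\mathcal Z(P)\subset\convexhull(\mathcal Z(P))$ and there is nothing to prove. So we may assume $P(w)\neq 0$.

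In that case we differentiate the product to obtain the identity
\begin{equation*}
\frac{P'(w)}{P(w)}=\sum_{k=1}^n \frac{1}{w-z_k}=\sum_{k=1}^n\frac{\overline{w}-\overline{z_k}}{|w-z_k|^2}.
\end{equation*}
Since $P'(w)=0$, the left side vanishes. Taking complex conjugates gives $\sum_k \lambda_k (w-z_k)=0$ with weights $\lambda_k=|w-z_k|^{-2}>0$. Solving for $w$ yields
\begin{equation*}
w=\frac{\sum_k \lambda_k z_k}{\sum_k\lambda_k},
\end{equation*}
which is a convex combination of the roots of $P$. Hence $w\in\convexhull(\mathcal Z(P))$.

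There is no real obstacle here. The only points needing care are the case distinction $P(w)=0$ versus $P(w)\neq0$, which is needed so that the logarithmic derivative is defined and the weights are finite, and keeping track of the conjugation so that the weights come out real and positive.

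An alternative I would mention, since similar arguments are used later for particle systems, is a separation argument. If $w\notin\convexhull(\mathcal Z(P))$, choose a closed half-plane $\{\Re(e^{-i\alpha}z)\le \Re(e^{-i\alpha}w)-\varepsilon\}$ containing all roots. Then every term $e^{i\alpha}/(w-z_k)$ has strictly positive real part, since $\Re\bigl(e^{-i\alpha}(w-z_k)\bigr)>0$ implies $\Re\bigl(e^{i\alpha}/(w-z_k)\bigr)>0$. So the sum cannot vanish, which contradicts $P'(w)=0$.
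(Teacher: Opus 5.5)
Your main argument is the paper's first proof: you evaluate $P'/P=\sum_k 1/(X-z_k)$ at a critical point $w\notin\mathcal Z(P)$, conjugate, and read off $w$ as a convex combination of the roots with weights $|w-z_k|^{-2}$. Your separation alternative is a rigorous form of the paper's second (electrostatic) proof, since $\overline{P'/P}$ is a positive multiple of the field there; one small correction is that a nonzero constant $P$ is not a vacuous case (it gives $\mathcal Z(P')=\C$ but $\mathcal Z(P)=\emptyset$), so it has to be excluded by convention, as the paper tacitly does.
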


We give two proofs of this result because these two proofs use arguments that shall be used later on. The first one is based on the fundamental identity: 
\begin{equation}
\label{P'/P decomp}
\cfrac{P'(X)}{P(X)}=\sum_{i=1}^n \cfrac{1}{X-x_j},
\end{equation}
for $P\in \C_n[X]$ (where $\C_n[X]$ is the set of polynomial of $\C[X]$ of degree at most $n$) whose roots are $(x_i)_{i=1}^n$ with multiplicity. This relation is the key ingredient in what shall follow to obtain the partial differential equation. 
\newline
The second proof gives an electrostatic point of view for the Theorem \ref{gauss lucas}. It makes a link between the roots of complex polynomials and potential theory. This point of view was a key ingredient to obtain results about random polynomials as we will mention in Section \ref{section random pol}.
\begin{proof}
$\bullet$ First proof. Let $x_1,...,x_k$ be the roots of $P$ and $z\in\mathcal Z(P')$. If $z$ is a multiple root of $P$ the result is obvious.  Now assume that $z\in\mathcal Z(P')\setminus\mathcal Z(P)$. Evaluating $$\cfrac{P'(X)}{P(X)}=\sum_{i=1}^k \cfrac{1}{X-x_j}$$ in $z$ yields: $$0=\sum_{i=1}^k \cfrac{1}{z-x_j}.$$
To express $z$ as a convex combination of the $z_j$ we write: 
\begin{align*}
0&=\sum_{j=1}^k \cfrac{1}{z-x_j}=\sum_{j=1}^k \cfrac{\overline z -\overline x_j}{|z-x_j|^2}=\sum_{j=1}^k \cfrac{z -x_j}{|z-x_j|^2}
\end{align*}
It gives: $$z=\cfrac{\dis\sum_{j=1}^k \cfrac{x_j}{|z-x_j|^2}}{\dis\sum_{j=1}^k \cfrac{1}{|z-x_j|^2}}\in\convexhull(\mathcal Z(P)).$$
$\bullet$ Second proof. Let $x_1,...,x_k$ be the roots of $P$ and assume that $P$ is monic. Consider the $x_i$ as positive electrical charges in $\C=\R^2$. The electric charge density created by these charges is $\rho=\sum_{i=1}^k\delta_{x_j}$. We can compute the electrical field $\overrightarrow{E}$ created by the charges using the Maxwell-Gauss law. Indeed, we have:
\begin{align*}
\divergence (\overrightarrow E)&=\cfrac{\rho}{\varepsilon_0}\\
\overrightarrow E&=-\overrightarrow {\nabla} V,
\end{align*}
where $V$ is the electric potential associated to $\overrightarrow E$ and $\varepsilon_0$ is the vacuum permittivity.
It yields the so-called Laplace equation $$-\Delta V=\cfrac{\rho}{\varepsilon_0}.$$
We recall that the fundamental solution of the Laplacian in $\R^2$ is $g(x,y)=-\cfrac{1}{2\pi}\ln(|(x,y)|)$ (with $|(x,y)|$ the canonic norm in $\R^2$ of $(x,y)$). More precisely we have that for all $a\in \R^2$ in the sense of distributions: $$-\Delta \left[-\ln (|(x,y)-a|)\right]=2\pi\delta_a.$$
Hence the solution of the Laplace equation is $$V(z)=-\cfrac{1}{2\pi\varepsilon_0}\sum_{i=1}^k\ln|z-x_i|=-\cfrac{\ln|P(z)|}{2\pi\varepsilon_0},\, z\in\C.$$
So the electric field $\overrightarrow E=-\overrightarrow\nabla V$ vanishes exactly where $P'$ vanishes. However since the charges are all positive, the electric field can not vanish outside the convex hull of the charges proving the result.
\end{proof}
 
\subsection{Known results about random polynomials}
\label{section random pol}
\paragraph{Notation} We introduce the notation of measure theory that shall be used in this section.
\newline
$\bullet$ For $\mathbb K=\R$ or $\C$ we write $\mathcal P(\mathbb K)$ the set of probability measures supported on $\mathbb K$.
\newline
$\bullet$ For $\mathbb K=\R$ or $\C$ and $\mu \in\mathcal P(\mathbb K)$ and $f$ a reasonable function we write $\mu(f)=\int_{\mathbb K} fd\mu$.
\newline
$\bullet$ Let $\mathbb K=\R$ or $\C$. We recall that we say that a sequence of probability measures $\mu_N$ on $\mathcal P(\mathbb K)$ converges weakly (or in law) towards $\mu\in \mathcal P(\mathbb K)$ if and only if for all $f: \mathbb K\to \R$ continuous and bounded functions we have $\mu_N(f)\underset{N\to+\infty}{\longrightarrow}\mu(f)$. In this case, we write $\mu_N\overunderset{\mathcal{L}}{N\to+\infty}{\longrightarrow}\mu$.
\newline
\newline
There are two natural ways to generate a random polynomial. We can sample its roots or its coefficients. These two constructions are linked by the Vieta formulas. 
\newline 
\tab A question that really emerged during the $20^{th}$ century was: given a polynomial in $\R[X]$ whose coefficient are independent and identically distributed, could we estimate the average number of real roots this random polynomial has? Up to the knowledge of the authors Bloch and  Pólya were the first one to try to answer this question in \cite{bloch1932roots}. Littlewood and Offord \cite{littlewood1938number} dealt with this question in particular cases as when the coefficients are uniform on $[0,1]$ or Bernoulli random variables. Then, an astonishing result was obtained by Kac by giving an exact formula to compute the expectation of real roots of such polynomials and computing asymptotically the expectation when the degree of the polynomial goes to $+\infty$ and when the coefficients are Gaussian \cite{kac1943average}. Actually, the result obtained by Kac is quite "universal" in the sense that for reasonable distributions this asymptotic still holds when the degree increases \cite{stevens1969average} and similar computations were also done for random trigonometric polynomials \cite{dunnage1966number,farmer2006crystallization} using the Kac approach. This result was improved by obtaining an expansion of this expectation for instance using a geometric interpretation of the Kac formula obtained by Edelman and Kostlan \cite{edelman1995many}. We refer to \cite{do2015real,do2023random} for recent works on this topic and references therein. We also refer to \cite{hough2009zeros} which is a general textbook about the study of the roots of standard models of random polynomials. 
\newline
\tab In this paper, we shall focus on the problem of random polynomials when the roots are sample independently and following the same distribution. This question was more recently asked by Pemantle and Rivin in \cite{pemantle2013distribution}. More precisely, let $\mu\in \mesC$ be a probability measure on $\C$ and $(X_i)_{i\in\N}$ a sequence of independent and identically distributed random variables with law $\mu$, we define for all $n\in\N$ the random polynomial $P_n(X)=\prod_{i=1}^n (X-X_i)$ and the empirical measure of the roots of $P_n'$ as $$\nu_n:=\frac{1}{n-1}\sum_{z\in\C,\, P_n'(z)=0 }\delta_z.$$
By the law of large numbers the empirical measure of the roots of $P_n$ almost surely converges weakly towards $\mu$. The question raised by Pemantle and Rivin was: how does the empirical measure of the roots of $P_n'$ behave? Thanks to their simulations Pemantle and Rivin conjectured that $(\nu_n)_{n\in\N}$ also almost surely converges weakly towards $\mu$. They proved the result under a technical assumption that must be satisfied by the measure $\mu$. However, this assumption fails for certain measures such as the uniform measure on the unit circle. Then, a universal result was obtained by Kabluchko in \cite{kabluchko2015critical}. He proved that for all $\mu\in\mesC$ the empirical mean of the roots of $P_n'$ converges in probability towards $\mu$. This proof is based on the electrostatic interpretation of the zeros of polynomials as it was mentioned in the second proof of Theorem \ref{gauss lucas}. He used that for $f$ a non zero holomorphic function on $\C$ we have that $\Delta\ln|f(z)|=2\pi\sum_{z\in \C,\, f(z)=0}\delta_z$ in the sense of distributions. Using this formula yields that for all $\phi$ smooth and compactly supported functions
\begin{equation}
\label{eq:equationdistrib}
\cfrac{1}{2\pi n}\int_\C \ln (|L_n(z)|) \Delta \phi(z) d\leb(z)=\cfrac{1}{n} \sum_{z\in \C,\, f'(z)=0}\phi(z)-\cfrac{1}{n}\sum_{z\in \C,\, f(z)=0}\phi(z),
\end{equation}
with $L_n(z):=\frac{P_n'(z)}{P_n(z)}$. The technical part of the proof consists in proving that the left hand side of \eqref{eq:equationdistrib} goes to 0 in probability which implies the result obtained by Kabluchko. The key argument is the use of potential theory tools as the Poisson–Jensen formula. This result represents an important step in the proof of the conjecture of Pemantle and Rivin but the convergence obtained by Kabluchko is in probability and not almost surely. Finally, the conjecture was solved by Angst $\&$ al. in \cite{angst2024almost}. The proof is based on the same idea as Kabluchko but they combined it with an anti-concentration inequality \cite{galligo2024anti}. Using the same arguments, this result was extended by proving that the result holds when we look at the empirical measure of the roots of the $k^{th}$ derivatives of $P_n$ \cite{michelen2024almost,michelen2024zeros} (also when $k$ depends on $n$ but slow enough).
\newline
\tab To finish this section and for the sake of completeness we give a counterpart of the proof of the Pemantle and Rivin conjecture in the case of zeros of trigonometric polynomials since we shall work on this case later on. This proof is nearly similar with the proof of the Pemantle and Rivin conjecture in the particular case when $\mu\in\mes$.

\begin{proposition}
Let $\mu\in\mesT$ and $(X_i)_{i\in\N}$ be a family of independent and identically distributed random variables of law $\mu$. For $N\ge 1$, let us define $P_N(X)=\prod_{i=1}^{2N}\sin\left(\frac{X-X_i}{2}\right)$ the trigonometric polynomial whose roots are the $(X_i)_{1\le i\le 2N}$. Let $\nu_N=\frac{1}{2N}\sum_{\theta\in\mathbb T,\, P_N'(\theta)=0 }\delta_\theta$ be the empirical measure of the roots of $P_N'$. Then almost surely $$\nu_N\overunderset{\mathcal L}{N\to\infty}{\longrightarrow}\mu.$$
\end{proposition}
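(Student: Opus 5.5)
The plan is to use Rolle's theorem on the circle rather than the potential-theoretic machinery of Kabluchko and Angst et al. that the real case requires. On $\T$ the roots of $P_N'$ interlace with those of $P_N$, so $\nu_N$ is deterministically within $O(1/N)$ of the empirical measure $\mu_N:=\frac{1}{2N}\sum_{i=1}^{2N}\delta_{X_i}$. The almost sure statement then reduces to the strong law of large numbers for $\mu_N$.

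\textbf{Step 1 (algebraic preliminaries).} Write $\sin\left(\frac{X-X_i}{2}\right)=\frac{1}{2i}\left(e^{i(X-X_i)/2}-e^{-i(X-X_i)/2}\right)$. A product of $2N$ such factors is a real trigonometric polynomial of degree $N$ in $X$, since the exponents are integers in $[-N,N]$. Hence $P_N'$ is a real trigonometric polynomial of degree at most $N$. It is not identically zero, because $P_N$ is nonconstant: it vanishes at $X_1$ and is not identically $0$. Multiplying by $e^{iN X}$ turns $P_N'$ into an algebraic polynomial of degree at most $2N$ in $e^{iX}$. So $P_N'$ has at most $2N$ zeros on $\T$, counted with multiplicity.

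\textbf{Step 2 (Rolle on the circle: interlacing).} Let $y_1<\dots<y_r$ be the distinct roots of $P_N$ in a fundamental domain, with multiplicities $k_1,\dots,k_r$ summing to $2N$. Each $y_j$ is a root of $P_N'$ of multiplicity $k_j-1$. If $r\ge 2$, Rolle's theorem applied to $P_N$ on each of the $r$ cyclic arcs $(y_j,y_{j+1})$ (indices mod $r$) gives one further zero of $P_N'$ in each open arc. This yields at least $\sum_j(k_j-1)+r=2N$ zeros, so by Step 1 there are exactly $2N$. Consequently there is exactly one zero of $P_N'$ in each open arc and no other zeros. The degenerate case $r=1$ (all $X_i$ equal to some $a$) is checked by hand: $P_N'$ is proportional to $\sin^{2N-1}\left(\frac{X-a}{2}\right)\cos\left(\frac{X-a}{2}\right)$, with zeros $a$ (multiplicity $2N-1$) and $a+\pi$.

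\textbf{Step 3 (comparison of measures).} Build a bijection between the $2N$ roots of $P_N$ (with multiplicity) and the $2N$ roots of $P_N'$. Match the $k_j-1$ copies of $y_j$ in $P_N'$ with $k_j-1$ copies of $y_j$ in $P_N$, at distance $0$. Match the remaining copy of $y_j$ with the Rolle zero $z_j\in(y_j,y_{j+1})$, at distance at most the arc length. The arcs are disjoint, so the total transport cost is at most $2\pi$. For $f$ Lipschitz on $\T$ this gives
\[
|\nu_N(f)-\mu_N(f)|\le \frac{1}{2N}\sum_{j}|f(z_j)-f(y_j)|\le \frac{2\pi\,\mathrm{Lip}(f)}{2N}.
\]
This bound holds for every realisation, with no probability involved.

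\textbf{Step 4 (conclusion).} By the strong law of large numbers applied to a countable family of Lipschitz functions dense in $C(\T)$, for example trigonometric monomials, almost surely $\mu_N(f)\to\mu(f)$ for all $f$ in the family, hence $\mu_N\to\mu$ weakly, since $\T$ is compact. Step 3 then gives $\nu_N(f)\to\mu(f)$ on this family, and density extends this to all of $C(\T)$, since all measures involved are probabilities. The only delicate point is the exact root count in Steps 1–2. The degree bound and Rolle must match exactly so that no stray zeros of $P_N'$ escape the interlacing; the degenerate and multiple-root cases are handled by the multiplicity bookkeeping above.
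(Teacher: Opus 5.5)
Your proof is correct and follows the same route as the paper. You use Rolle interlacing on the circle and pair each root of $P_N$ with either itself (for the extra multiplicities) or the root of $P_N'$ in the next arc, so the total Lipschitz cost is at most $2\pi$; this gives the deterministic $O(1/N)$ bound, and the strong law of large numbers finishes the argument. Two points the paper passes over are made explicit in your version: the degree count in Step 1, which rules out stray zeros of $P_N'$, and the countable family of test functions in Step 4, which justifies the ``almost surely for all $f$'' quantifier.
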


\begin{proof}
For all $N>0$ we relabel the roots of $P_N$ as follows. We denote $k(N)$ the (random) number of distinct roots of $P_N$ and we introduce $(\alpha_i(N))_{1\le i\le k(N)}$ the (random) multiplicity of the respective distinct roots of $P_N$. We first order the roots of $P_N$ $$0\le X_{1,1}=...=X_{1,\alpha_1(N)}<X_{2,1}=...=X_{2,\alpha_2(N)}<...<X_{k(N),1}=...=X_{k(N),\alpha_{k(N)}(N)}<2\pi.$$
We can now relabel the roots of $P_N$ such that $\overline {X_1}:=X_{1,1}$, ..., $\overline {X_{\alpha_1(N)}}:=X_{1,\alpha_1(N)}$, $\overline {X_{\alpha_1(N)+1}}:=X_{2,1}$, ..., $\overline {X_{\alpha_1(N)+\alpha_2(N)}}:=X_{2,\alpha_2(N)}$, ..., $\overline {X_{2N}}:=X_{k(N),\alpha_{k(N)}(N)}$.
\newline
By the Rolle theorem we know that there are two kinds of roots of $P_N'$: the same roots as $P_N$ but with multiplicity $\alpha_i(N)-1$ for all $1\le i\le k(N)$ and roots (that are simple) obtained by using the Rolle theorem between $X_{i,\alpha_i(N)}<X_{i+1,1}$ for all $1\le i\le k(N)-1$ and between $X_{k(n),\alpha_{k(N)}(N)}<X_{1,1}+2\pi$.
For all $N>0$, let $(Y_i^N)_{1\le i\le 2N}$ be the $2N$ roots of $P_N'$ such that $Y_i^N$ is either $\overline{X_i}$ in the case of a multiple root or the unique root of $P_N'$ between $\overline{X_i}$ and $\overline{X_{i+1}}$ otherwise.
Let $f$ be a bounded Lipschitz function with Lipschitz constant $K$. Almost surely, for all $N$ we have: 
\begin{equation}
\begin{split}
\label{eq:lawoflargenumberscircle}
\left|\cfrac{1}{2N}\sum_{i=1}^{2N} f(\overline{X_i})-\cfrac{1}{2N}\sum_{i=1}^{2N} f(Y_i^N)\right|&\le\cfrac{1}{2N}\sum_{i=1}^{2N} \left|f(\overline{X_i})-f(Y_i^N)\right|\\
&\le\cfrac{K}{2N}\sum_{i=1}^{2N} \left|\overline{X_i}-Y_i^N\right|\\
& \le\cfrac{K}{2N}\sum_{i=1}^{2N} \left[\overline{X_{i+1}}-\overline{X_{i}}\right],
\end{split}
\end{equation}
where $\overline{X_{2N+1}}=\overline{X_1}+2\pi$ by convention. 
\newline
So we deduce that almost surely, for all $N$
$$\left|\cfrac{1}{2N}\sum_{i=1}^{2N} f(\overline{X_i})-\cfrac{1}{2N}\sum_{i=1}^{2N} f(Y_i^N)\right|\le \cfrac{K}{2N}\,(\overline{X_{N+1}}-\overline{X_1})=\cfrac{\pi K}{N}.$$
By the strong law of large numbers for the $X_i$ we deduce that almost surely for all $f$ bounded Lipschitz function $$\cfrac{1}{2N}\sum_{i=1}^{2N} f(Y_i)\underset{N\to+\infty}{\longrightarrow}\int_\T f d\mu$$
which proves the result.
\end{proof}

\subsection{Heuristic of the PDE}
\label{section: heuristic pde}
In this section we recall how the PDE was formally obtained in \cite{steinerberger2019nonlocal}. We give the heuristic of the proof for polynomials and not trigonometric polynomials to focus on the main ideas. The heuristic of the proof for trigonometric polynomials is quite similar and can be found in \cite{kiselev2022}.
\newline
Let $(x_i)_{i\in\N}\in \R^\N$ be all distinct and write $p_N(X)=\prod_{i=1}^N(X-x_i)$.
Let $u\in \mes$ be a distribution that approximates the $x_i$ in the sense that $\frac{\sum_{i=1}^N \delta_{x_i}}{N} \sim u$. Let $y_i$ be the unique root of $p_N'$ in $(x_i,x_{i+1})$. The goal is to understand the flow created by the derivation and more precisely to look at $y_i-x_i$. Fix $m\in\N$.
As for the Gauss-Lucas theorem, we start from the identity: $$0=\sum_{i=1}^N\cfrac{1}{y_m-x_i}.$$ 
Since we look at a mean field limit we naturally renormalise the sum by starting from the identity: $$0=\cfrac{1}{N}\sum_{i=1}^N\cfrac{1}{y_m-x_i}.$$ 
We separate the sum in two parts associating to a near interaction and a far away interaction: 

\begin{equation}
\label{eq: heuristic 1}
0=\cfrac{1}{N}\sum_{|x_i-y_m|\le N^{-1/2}}\cfrac{1}{y_m-x_i}+\cfrac{1}{N}\sum_{|x_i-y_m|> N^{-1/2}}\cfrac{1}{y_m-x_i}=:S_{1,N}+S_{2,N}.
\end{equation}
Since the distribution $u$ approximates the $x_i$ we formally have: 
\begin{equation}
\label{eq: formally 1}
\cfrac{1}{N}\sum_{|x_i-y_m|> N^{-1/2}}\cfrac{1}{y_m-x_i}\sim \underset {\varepsilon\to 0}{\lim}\int_{|x-y_m|>\varepsilon}\cfrac{1}{y_m-x}u(dx)=\pi H[u](y_m),
\end{equation}
where $H[u]:=\frac{1}{\pi}P.V.(1/x)\ast u$ is the real Hilbert transform of $u$ with $P.V.$ the principal value.
\newline
About $S_{1,N}$, the idea introduced by Steinerberger is to approximate the $x_i$ near $y_m$ by equally distributed points around $x_m$.
We want to construct points $\tilde x_i$ such that $$\int_{x_m}^{\tilde x_i} u(x)dx=\cfrac{i-m}{N}.$$
Formally, approximating $\int_{x_m}^{\tilde x_i} u(x)dx\sim u(x_m)(\tilde x_i-x_m)$, we set $\tilde x_i=x_m+\cfrac{i-m}{N u(x_m)}$.
\newline
Formally, we rewrite $S_{1,N}$ by changing the $x_i$ by these new points: 
\begin{equation}
\label{eq formallu 2}
\begin{split}
S_{1,N}=\cfrac{1}{N}\sum_{|x_i-y_m|<N^{-1/2}}\cfrac{1}{y_m-x_i}&\sim \cfrac{1}{N}\sum_{|\tilde x_i-y_m|<N^{-1/2}}\cfrac{1}{y_m-\tilde x_i}\\
&\sim \cfrac{1}{N}\sum_{|k|<N^{1/2}}\cfrac{1}{y_m-x_m-\cfrac{k}{Nu(x_m)}}\\
&\sim u(x_m)\sum_{k\in\Z}\cfrac{1}{(y_m-x_m)Nu(x_m)+k}.
\end{split}
\end{equation}
We recall the Euler identity for the function $\cotan:=\frac{\cos}{\sin}$: $$\cotan(z)=\sum_{k\in\Z}\cfrac{1}{z+k\pi}.$$
Using this identity and \eqref{eq formallu 2}, we get: 
\begin{equation}
\label{eq:formmally 3}
S_{1,N}\sim \pi u(x_m)\cotan(N\pi u(x_m)(y_m-x_m))
\end{equation}
Passing to the limit in \eqref{eq: heuristic 1} formally yields:
\begin{equation}
\label{eq:formmally 4}
0=\pi H[u](y_m)+\pi u(x_m)\cotan(N\pi u(x_m)(y_m-x_m)).
\end{equation}
Since $u$ is a density that approximates the $x_i$, we have $$u(x_m)(x_{m+1}-x_m)\approx\int_{x_m}^{x_{m+1}} u(y)dy=\cfrac{1}{N}$$ which gives that $$0\le y_m-x_m\le x_{m+1}-x_m \le \cfrac{1}{Nu(x_m)},$$ and so $$0\le N\pi u(x_m)(y_m-x_m)\le \pi.$$ 
The formula \eqref{eq:formmally 4} yields a flow created by the derivation: $$y_m-x_m=\cfrac{1}{\pi Nu(x_m)}\arcot\left(-\cfrac{H[u](x_m)}{u(y_m)}\right),$$
where $\arcot$ is the reciprocal function of $\cotan :(0,\pi)\to \R$.
Using the identity $\arcot(-\theta)=\arctan(\theta)+\pi/2$ for all $\theta\in\R$, the flow creating by the derivation is
\begin{equation}
\label{eq: heuristic speed}
y_m-x_m=\cfrac{1}{\pi Nu(x_m)}\left(\arctan\left(\cfrac{H[u](y_m)}{u(x_m)}\right)+\cfrac{\pi}{2}\right).
\end{equation}
Taking $\Delta t=1/N$ as time scale, \eqref{eq: heuristic speed} gives that the derivation creates a macroscopic flux at speed: $$v(x_m)=\cfrac{1}{\pi u(x_m)}\left(\arctan\left(\cfrac{H[u](y_m)}{u(x_m)}\right)+\cfrac{\pi}{2}\right).$$
It gives at a macroscopic level the following conservative equation: 
\begin{equation}
\label{eq: heuristic equation}
\partial_t u+\partial_x(uv)=\partial_t u+\cfrac{1}{\pi}\,\partial_x\left(\arctan\left(\cfrac{H[u]}{u}\right)\right)=0.
\end{equation}
Let us also mention that in \cite{steinerberger2019nonlocal} Steinerberger also finds explicit solutions for the PDE \eqref{eq: heuristic equation} that can be constructed using well known results on orthogonal polynomials.
Another example starting from the degenerate condition $x_i=0$ for all $i\in\N$ gives that a solution of the PDE \eqref{eq: heuristic equation} starting from $\delta_0(dx)$ should be $u(t,dx)=(1-t)\delta_0(dx)$. 
\section{Study of the PDE and primitive equation}
\label{section:studypde}
We shall study the following PDE which is the analogous of \eqref{eq: heuristic equation} for trigonometric polynomials: 
\begin{equation}
\label{eq:randompolpde}
\boxed{
\partial_t u+\frac{1}{\pi}\,\partial_\theta\left(\arctan\left(\cfrac{H[u]}{u}\right)\right)=0\,\,\text{in } (0,+\infty)\times\T,}
\end{equation}
where $\T=\R/2\pi\Z$ and $H[u]:=\frac{1}{2\pi}P.V.\cotan\left(\frac{.}{2}\right)\ast u$ is the periodic Hilbert transform with $P.V.$ the principal value. If $u$ is smooth enough we can more explicitly write $H[u](y)=\frac{1}{2\pi}P.V.\int_\T\cotan(\frac{y-x}{2})(u(x)-u(y))dx$. Since the mass of a solution of \eqref{eq:randompolpde} is preserved, we shall look at solution $u$ of \eqref{eq:randompolpde} such that $u(t,.)\in\mesT$ for every $t\ge 0$. The existence, regularity and long time behaviour of solutions of \eqref{eq:randompolpde} starting from a smooth initial condition have been studied in \cite{kiselev2022}. The main idea of this section is to integrate the equation \eqref{eq:randompolpde} to obtain what we call the "primitive" equation of \eqref{eq:randompolpde}. This new equation shall be treated with the theory of viscosity solutions \cite{crandall1992,barles2008,arisawa2006,arisawa2008} 
to define a new kind of solutions of \eqref{eq:randompolpde}. This allows us to improve the existing results of existence and uniqueness of solutions of \eqref{eq:randompolpde} namely by allowing us to consider more general initial conditions.
\subsection{Notation}
We introduce the notation for the functional spaces that shall be used in the following parts.
\newline
$\bullet$ A function $f:\R\to\R$ is an element of $C^k(\T)$ for $k\in[0,\infty]$ (resp. $L^p(\T)$ for $p\in[1,\infty]$) if $f$ is a $2\pi$ periodic function of class $C^k$ (resp. if $f$ is a $2\pi$ periodic function in $L^p([0,2\pi])$). We write $\mathcal F(\T)$ for the space of functions on the circle that means the functions from $\R$ to $\R$ that are $2\pi$ periodic.
\newline
$\bullet$ Let $\mathcal F_{2\pi}$ (resp. $\mathcal F_{t,2\pi}$) be the space of functions $F$ defined on $\R$ (resp. $\R^+\times \R$) for which there exists $a\in\R$ such that for all $x\in\R$: $F(x+2\pi)=F(x)+a$ (resp. for all $t\ge 0$, for all $x\in\R$, we have $F(t,x+2\pi)=F(t,x)+a$).
\newline
$\bullet$ We say that $F\in \mathcal F_{2\pi}$ (resp. $\mathcal F_{t,2\pi}$) is bounded if there exists exists $C>0$ such that $F$ is bounded by $C$ on $[0,2\pi]$ (resp. for all $t\ge 0$, $F(t,.)$ is bounded by $C$ on $[0,2\pi]$). 
\newline
$\bullet$ Let $C_x^{1,1}(\R^+\times\R)$ be the space of functions $f(t,x)$ defined from $\R^+\times \R$ with values in $\R$ such that $f$ is a $C^1$ function and $\partial_x f$ is Lipschitz. 
\newline
$\bullet$ Let $C^{1,1}_{2\pi}$ be the space of functions $F\in C^1(\R,\R)$ such that $F'$ is a Lipschitz function and for which there exists $a\in\R$ such that for all $x\in\R$, we have $F(x+2\pi)=F(x)+a$.
\newline
$\bullet$ Let $C^{1,1}_{t,2\pi}$ be the space of functions of $F\in C_x^{1,1}(\R^+\times\R)$ , for which there exists $a\in\R$ such that for all $t\ge 0$, for all $x\in\R$, we have $F(t,x+2\pi)=F(t,x)+a$. 
\newline
$\bullet$ For a locally bounded function $F$ on a subset $\Omega$ of $\R^d$, we define for all $x\in\Omega$
$$F_*(x)=\underset{y\to x}{\liminf}\, F(y),\,\,F^*(x)=\underset{y\to x}{\limsup}\, F(y).$$
\subsection{Primitive equation} 
\label{subsection:Primitiveequation}
In \cite{bertucci2022spectral,bertucci2025new}, authors studied what is called the Dyson equation on $\R$ or $\T$ namely 
\begin{equation}
\label{dysoneq}
\partial_t u+\partial_x (uH[u])=0.
\end{equation} 
These PDE appear as limits of the empirical mean of the eigenvalues of continuous in time models of large random matrices introduced by Dyson in 1962 \cite{dyson1962}. The main idea in \cite{bertucci2022spectral,bertucci2025new} is to integrate the PDE that is to look at the PDE satisfied by the cumulative distribution function of a solution of \eqref{dysoneq}. This PDE can be treated with the theory of viscosity solutions. We follow this idea to deal with \eqref{eq:randompolpde}. 
\newline
A priori there is no "good" definition of the cumulative distribution function of a measure $\mu\in \mesT$, since there is no good point to start integrating from.
\newline
Given $\mu\in\mesT$ we define its cumulative distribution function as $F_\mu(x)=\mu([0,x])$ if $x\ge 0$ and $F_\mu(x)=-\mu((x,0))$ if $x<0$. This define a function $F$ on $\R$ that is non-decreasing, right continuous and which satisfied $F_\mu(x+2\pi)=F_\mu(x)+1$ for every $x\in\R$ since $\mu\in\mesT$. 
\newline
Formally, if $F(t,\theta):=\int_0^\theta\mu(t,\theta')d\theta'$ and $\mu$ is a solution of \eqref{eq:randompolpde}, integrating \eqref{eq:randompolpde} with respect to $\theta$ gives:
\begin{equation}
\partial_t F(t,\theta)+\cfrac{1}{\pi}\left(\arctan\left( \cfrac{H[\partial_\theta F(t,.)](\theta)}{(\partial_\theta F(t,\theta))_+}\right)+\cfrac{\pi}{2}\right)=0\,\, \text{in } (0,+\infty)\times \R.
\label{Primitive1}
\end{equation}
We put $(\partial_\theta F(t,\theta))_+$ instead of $\partial_\theta F(t,\theta)$ to have a non negative term in the denominator since we shall look at solutions of \eqref{Primitive1} which are non-decreasing in the space variable as primitives of measures. This way the PDE \eqref{Primitive1} is elliptic as we shall explain.
We choose the constant of integration with respect to what was obtained in the heuristic of the proof in \eqref{eq: heuristic speed}.
\newline
We introduce the operator $A_0$ defined on $C_{2\pi}^{1,1}$ and called the half Laplacian defined as:
\begin{equation}
A_0[f](\theta)=\cfrac{1}{4\pi}\,P.V.\int_{-\pi}^{\pi}\cfrac{f(\theta)-f(\theta-\theta')}{\sin^2\left(\frac{\theta'}{2}\right)}\,d\theta'=\cfrac{1}{8\pi}\int_{\T}\cfrac{2f(\theta)-f(\theta-\theta')-f(\theta+\theta')}{\sin^2\left(\frac{\theta'}{2}\right)}\,d\theta'.
\end{equation}
We notice that $A_0[f]=H[f']$ for $f\in\mathcal C_{2\pi}^{1,1}$.
\newline
Hence, we call primitive equation of \eqref{eq:randompolpde} the following PDE:
\begin{equation}
\boxed{
\partial_t F(t,\theta)+\cfrac{1}{\pi}\left(\arctan\left( \cfrac{A_0[F(t,.)](\theta)}{(\partial_\theta F(t,\theta))_+}\right)+\cfrac{\pi}{2}\right)=0\,\, \text{in } (0,+\infty)\times \R.
\label{eqPrimitive}}
\end{equation}
\begin{remark}
Let us notice that this equation is invariant by translation meaning that if $F$ is solution of \eqref{eqPrimitive} then $G(t,\theta)=F(t,\theta+a)+b$ is also a solution for all $a,b\in\R$. So if instead of considering the cumulative distribution function of $\mu\in\mesT$ with respect to $0$ as an angle of reference we had chosen another angle $\theta'\in\R$ then the cumulative distribution function with respect to this angle would also satisfied the PDE \eqref{eqPrimitive}.
\end{remark}
The operator $A_0$ satisfies a maximum principle: if $F\in C^{1,1}_{2\pi}$ has a maximum in $\theta_0$, then $A_0[F](\theta_0)\ge 0$. This motivates our approach by the viscosity solutions theory as in \cite{bertucci2022spectral,bertucci2024spectral,bertucci2025new}.
\begin{definition} 
\label{def:viscoeq}\,
\newline
$\bullet$ An upper semi continuous (usc) function $F\in \mathcal F_{t,2\pi}$ is said to be a viscosity subsolution
of $~\eqref{eqPrimitive}$ if for any function $\phi\in C^{1,1}_{t,2\pi}$, $(t_0,\theta_0)\in (0,\infty)\times \R$ point of strict maximum of $F-\phi$ such that $\partial_\theta \phi(t_0,\theta_0)\neq 0$ the following holds: 
\begin{equation}
\partial_t \phi(t_0,\theta_0)+\cfrac{1}{\pi}\left(\arctan\left( \cfrac{A_0[\phi(t_0,.)](\theta_0)}{(\partial_\theta \phi(t_0,\theta_0))_+}\right)+\cfrac{\pi}{2}\right)\le 0
\label{subsoleq}
\end{equation}
\newline
$\bullet$ A lower semi continuous (lsc) function $F\in \mathcal F_{t,2\pi}$ is said to be a viscosity supersolution
of $~\eqref{eqPrimitive}$ if for any function $\phi\in C^{1,1}_{t,2\pi}$, $(t_0,\theta_0)\in (0,\infty)\times \R$ point of strict minimum of $F-\phi$ such that $\partial_\theta \phi(t_0,\theta_0)\neq 0$ the following holds: 
\begin{equation}
\partial_t \phi(t_0,\theta_0)+\cfrac{1}{\pi}\left(\arctan\left( \cfrac{A_0[\phi(t_0,.)](\theta_0)}{(\partial_\theta \phi(t_0,\theta_0))_+}\right)+\cfrac{\pi}{2}\right)\ge 0
\label{supersoleq}
\end{equation}
\newline
$\bullet$ A viscosity solution $F$ of $~\eqref{eqPrimitive}$ is a locally bounded function such that $F$ is subsolution and $F_*$ is a supersolution.
\newline
$\bullet$ We say that the initial condition of $F$ is $F_0$ if $F^*(0,.)\le F_0^*$ and $F_*(0,.)\ge (F_0)_*$.
\newline
$\bullet$ Finally $\mu\in C([0,T],\mesT)$ is said to be a viscosity solution of \eqref{eq:randompolpde} with initial condition $\mu_0\in\mesT$ if $F(t,x):=F_{\mu(t,.)}(x)$ is a viscosity solution of $\eqref{eqPrimitive}$ with initial condition $F_{\mu_0}$.
\end{definition}
\begin{remark}
Following the heuristic derivation of the PDE \eqref{eq:randompolpde} a solution of the PDE \eqref{eq:randompolpde} starting from $\delta_0(dx)$ should be in the real case $u(t,x)=(1-t)\delta_0(dx)$. One can easily check that the cumulative distribution function of $u$ is a viscosity solution of \eqref{eqPrimitive} in the sense of Definition \ref{def:viscoeq}.
\end{remark}
We have to introduce a weak notion of solutions for \eqref{eq:randompolpde} since we do not expect any regularizing property of the PDE \eqref{eq:randompolpde} for non smooth initial data. Now we can try to prove existence and uniqueness for viscosity solutions of \eqref{eq:randompolpde} starting from non smooth initial data as $\delta_0(dx)$. 
\newline
However, due to  another main difficulty of the PDE which is the singularity, the notion of viscosity solutions introduced in Definition \ref{def:viscoeq} is difficult to use in practice. Indeed, we have to impose at a point of maximum that $\partial_\theta\phi \ne 0$ to make sense of the quantities in the PDE. Due to this extra condition it is difficult to state properly a comparison principle for this notion of viscosity solutions and so to obtain uniqueness of the viscosity solutions. 
\newline
To overcome this difficulty we shall restrict our study to solutions of \eqref{eq:randompolpde} that formally satisfies $u(t,x)\ge m$ for all $t\ge 0$, for all $x\in\R$ for a certain $m>0$. This hypothesis shall be called $\eqref{hypothesis min}$ in Definition \ref{def:hypomin}. Having in mind the so-called monotone principle which states that for a smooth solution of \eqref{eq:randompolpde} the minimum is non-decreasing in time we shall justify (even at the level of viscosity solutions) that it suffices to satisfy the hypothesis $\eqref{hypothesis min}$ at time 0 to satisfy it for any $t\ge 0$.
\newline
To properly state a comparison principle, we introduce a family of PDEs obtained by truncating the singularity. Then, we shall explain how the results of existence, uniqueness and continuity in the initial condition obtained at the level of these truncated PDEs can be used to obtain results for the initial PDE.
\newline
For $m>0$ we consider the following primitive equation that shall be called $(E_m)$: 
\begin{align}
\boxed{
\tag{$E_m$}
\partial_t F(t,\theta)+\cfrac{1}{\pi}\left(\arctan\left( \cfrac{A_0[F(t,.)](\theta)}{\max((\partial_\theta F(t,\theta))_+,m)}\right)+\cfrac{\pi}{2}\right)=0\,\, \text{in } (0,+\infty)\times \R}
\label{eqPrimitivemin}
\end{align}

\subsection{Viscosity solutions of the truncated PDEs}
\label{Subsection:viscosol}
Equations \eqref{eqPrimitivemin} shall be treated with the theory of viscosity solutions \cite{crandall1992,arisawa2006,barles2008}.
\begin{definition} 
\label{def:visco}
Fix $m>0$.
\newline
$\bullet$ An upper semi continuous (usc) function $F\in \mathcal F_{t,2\pi}$ is said to be a viscosity subsolution
of $~\eqref{eqPrimitivemin}$ if for any function $\phi\in C^{1,1}_{t,2\pi}$, $(t_0,\theta_0)\in (0,\infty)\times \R$ point of strict maximum of $F-\phi$ the following holds: 
\begin{equation}
\partial_t \phi(t_0,\theta_0)+\cfrac{1}{\pi}\left(\arctan\left( \cfrac{A_0[\phi(t_0,.)](\theta_0)}{\max((\partial_\theta \phi(t_0,\theta_0))_+,m)}\right)+\cfrac{\pi}{2}\right)\le 0
\label{subsol}
\end{equation}
\newline
$\bullet$ A lower semi continuous (lsc) function $F\in \mathcal F_{t,2\pi}$ is said to be a viscosity supersolution
of $~\eqref{eqPrimitivemin}$ if for any function $\phi\in C^{1,1}_{t,2\pi}$, $(t_0,\theta_0)\in (0,\infty)\times \R$ point of strict minimum of $F-\phi$ the following holds: 
\begin{equation}
\partial_t \phi(t_0,\theta_0)+\cfrac{1}{\pi}\left(\arctan\left( \cfrac{A_0[\phi(t_0,.)](\theta_0)}{\max((\partial_\theta \phi(t_0,\theta_0))_+,m)}\right)+\cfrac{\pi}{2}\right)\ge 0
\label{supersol}
\end{equation}
\newline
$\bullet$ A viscosity solution $F$ of $~\eqref{eqPrimitivemin}$ is a locally bounded function such that $F$ is subsolution and $F_*$ is a supersolution.
\newline
$\bullet$ We say that the initial condition of $F$ is $F_0$ if $F^*(0,.)\le F_0^*$ and $F_*(0,.)\ge (F_0)_*$.
\end{definition}
As usual in the theory of viscosity solutions for integro-differential equations, it is more convenient to work with the local reformulation. 
We define a local and non local operator associated to $A_0$ as in \cite{arisawa2006,barles2008,bertucci2022spectral}. For $\delta>0$ let $I_{1,\delta}$ and $I_{2,\delta}$ defined by: 
\begin{align*}
I_{1,\delta}[\phi](\theta)&=\int_{\T\cap|\theta'|\le \delta}\cfrac{2\phi(\theta)-\phi(\theta-\theta')-\phi(\theta+\theta')}{\sin^2(\theta'/2)}\cfrac{d\theta'}{8\pi}\\
I_{2,\delta}[\phi](\theta)&=\int_{\T\cap|\theta'|> \delta}\cfrac{2\phi(\theta)-\phi(\theta-\theta')-\phi(\theta+\theta')}{\sin^2(\theta'/2)}\cfrac{d\theta'}{8\pi}.
\end{align*}
This allows us to distinguish between the part of $A_0$ which acts as a differential operator ($I_{1,\delta}$) and the rest ($I_{2,\delta}$).
\begin{proposition}
\label{prop: visco comparison principle}
Fix $m>0$.
Let $F$ be a subsolution (resp. supersolution) of $\eqref{eqPrimitivemin}$. Then for all $\phi\in C^{2}_{t,2\pi}$, $\delta>0$ and $(t_0,\theta_0)\in (0,+\infty)\times \R$ such that $(F-\phi) (t_0,\theta_0)=0$ and $(F-\phi)(t,\theta)\le 0$ (resp. $\ge  0$) for any $(t,\theta)\in B((t_0,\theta_0),\delta)$, the following holds: $$\partial_t \phi(t_0,\theta_0)+\cfrac{1}{\pi}\left(\arctan\left( \cfrac{I_{1,\delta}[\phi(t_0,.)](\theta_0)+I_{2,\delta}[F(t_0,.)](\theta_0)}{\max((\partial_\theta \phi(t,\theta))_+,m)}\right)+\cfrac{\pi}{2}\right)\le 0\text{ (resp. }\ge 0).$$
\end{proposition}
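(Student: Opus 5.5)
We prove the subsolution case; the supersolution case is symmetric, with inequalities and the roles of $\sup$/$\inf$ reversed. The approach is the standard one for nonlocal equations (Barles--Imbert, Arisawa): build a sequence of admissible test functions that coincide with $\phi$ near $(t_0,\theta_0)$ and approximate $F$ from above away from it, apply Definition \ref{def:visco}, and pass to the limit with monotone and dominated convergence.

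The first step is to make the contact strict. We replace $\phi$ by $\tilde\phi(t,\theta)=\phi(t,\theta)+|t-t_0|^4+\eta(\theta-\theta_0)$, where $\eta$ is $2\pi$-periodic, vanishes to fourth order at $0$, and is positive elsewhere on $(-\pi,\pi)$. The extra terms vanish with their first and second derivatives at the contact point, so $\partial_t\tilde\phi$, $\partial_\theta\tilde\phi$ and the value of $I_{1,\delta}$ at $(t_0,\theta_0)$ change by an amount that tends to $0$ as the perturbation is scaled down. We may therefore assume that $(t_0,\theta_0)$ is a strict maximum of $F-\phi$ on $\overline{B}((t_0,\theta_0),\delta)$, working with a slightly smaller $\delta'<\delta$ and letting $\delta'\to\delta$ at the end. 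The change from $\delta$ to $\delta'$ only moves mass between $I_{1}$ and $I_{2}$ on the annulus $\delta'<|\theta'|\le\delta$, where $F\le\phi$. This is harmless, and in fact the inequality only improves, by the monotonicity argument below.

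The second step is the approximating test functions. Since $F$ is usc and $F(t,\cdot)-\phi(t,\cdot)\in\mathcal F_{2\pi}$, we can take a decreasing sequence of smooth functions $\psi_n\in C^{2}_{t,2\pi}$ with $\psi_n\downarrow F$ pointwise, uniformly bounded on $[t_0-1,t_0+1]\times[0,2\pi]$. The periodic increment constant $a$ of $F$ is preserved, since we work with $F-\ell$ for $\ell(\theta)=a\theta/2\pi$ and mollify a continuous majorant of the periodic usc function. Let $\chi$ be a smooth cutoff equal to $1$ on $B((t_0,\theta_0),\delta'/2)$ and supported in $B((t_0,\theta_0),\delta')$, made $2\pi$-periodic in $\theta$. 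Set
\[\phi_n=\chi\phi+(1-\chi)\max_\varepsilon(\phi,\psi_n),\]
where $\max_\varepsilon$ is a smooth maximum and $\varepsilon=\varepsilon_n\to0$. Then $\phi_n\in C^{1,1}_{t,2\pi}$, $\phi_n=\phi$ near $(t_0,\theta_0)$, and $\phi_n\ge F$ with equality only at $(t_0,\theta_0)$. Near the point this follows from strictness; away from the ball it holds after adding a small positive constant on the support of $1-\chi$, which is absorbed into $\psi_n$. So $(t_0,\theta_0)$ is a strict global maximum of $F-\phi_n$, and Definition \ref{def:visco} gives
\[\partial_t\phi(t_0,\theta_0)+\tfrac1\pi\Bigl(\arctan\Bigl(\tfrac{I_{1,\delta'}[\phi](\theta_0)+I_{2,\delta'}[\phi_n(t_0,\cdot)](\theta_0)}{\max((\partial_\theta\phi)_+,m)}\Bigr)+\tfrac\pi2\Bigr)\le0.\]

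The third step is the limit. For $|\theta'|>\delta'$, the integrand $2\phi_n(\theta_0)-\phi_n(\theta_0\pm\theta')$ equals $2F(t_0,\theta_0)-\phi_n(t_0,\theta_0\pm\theta')$. It increases in $n$ towards $2F(t_0,\theta_0)-F(t_0,\theta_0\pm\theta')$, because $\phi_n(t_0,\cdot)\downarrow F(t_0,\cdot)$ off the ball, and it is bounded from below uniformly. The weight $\sin^{-2}(\theta'/2)$ is bounded on $\{|\theta'|>\delta'\}$, so monotone convergence gives $I_{2,\delta'}[\phi_n]\to I_{2,\delta'}[F(t_0,\cdot)]$, possibly $+\infty$. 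Since $\arctan$ is continuous and increasing on $[-\infty,+\infty]$ and the denominator is at least $m>0$, we can pass to the limit in the inequality. Finally, since $F\le\phi$ on the annulus, $I_{1,\delta'}[\phi]+I_{2,\delta'}[F]\ge I_{1,\delta}[\phi]+I_{2,\delta}[F]$, and monotonicity of $\arctan$ gives the claim.

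The main obstacle is the second step, namely constructing $\phi_n$ that are genuinely admissible. They must lie in $C^{1,1}_{t,2\pi}$ with the correct periodic increment, touch $F$ strictly, and converge monotonically to $F$ off the ball. The key point making the argument close is that the truncation $\max(\cdot,m)$ keeps the denominator bounded away from zero, so the limit in the third step requires nothing beyond monotone convergence.
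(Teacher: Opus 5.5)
The paper states this proposition without proof and defers to Barles--Imbert and Arisawa. Your three-step plan (make the contact strict, glue $\phi$ near the point to decreasing smooth majorants of $F$ away from it, apply the global definition, pass to the limit by monotone convergence) is exactly that standard argument. However, the gluing you wrote does not have the property that Step 3 relies on.

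\textbf{The gap.} Outside the support of $\chi$ you have $\phi_n=\max_{\varepsilon_n}(\phi,\psi_n)$, which decreases to $\max(\phi,F)$, not to $F$. The hypothesis only gives $F\le\phi$ inside $B((t_0,\theta_0),\delta)$; elsewhere $\phi$ is arbitrary and typically much larger than $F$. Monotone convergence therefore produces $I_{1,\delta}[\phi(t_0,\cdot)](\theta_0)+I_{2,\delta}[\max(\phi(t_0,\cdot),F(t_0,\cdot))](\theta_0)$. Since $\max(\phi,F)\ge F$, the last term is $\le I_{2,\delta}[F(t_0,\cdot)](\theta_0)$.

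The left-hand side is increasing in the argument of $\arctan$. So a subsolution inequality for this smaller quantity does not imply it for the larger one, and as written you only prove a strictly weaker statement.

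The same direction problem affects your displayed inequality. There $\chi=1$ only on the $\delta'/2$-ball, while on the annulus $\delta'/2<|\theta-\theta_0|<\delta'$ your $\phi_n\ge\phi$. Hence $I_{1,\delta'}[\phi_n]\le I_{1,\delta'}[\phi]$, and you cannot substitute $I_{1,\delta'}[\phi]$ directly; it is only recovered in the limit.

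\textbf{The repair.} Keep $\phi$ out of the outer region altogether.
\begin{itemize}
\item Take $0\le\chi\le1$ with $\chi=1$ on $\overline{B}((t_0,\theta_0),\delta')$ and $\mathrm{supp}\,\chi\subset B((t_0,\theta_0),\delta)$.
\item Set $\phi_n=\chi\phi+(1-\chi)(\psi_n+c_n)$ with $c_n\downarrow 0$. No smooth maximum is needed, because on $\mathrm{supp}\,\chi$ both $\phi$ and $\psi_n$ dominate $F$, so the contact is still strict.
\item Now $I_{1,\delta'}[\phi_n]=I_{1,\delta'}[\phi]$ exactly.
\item At time $t_0$, $\phi_n\downarrow g:=\chi\phi+(1-\chi)F$. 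This $g$ equals $F$ for $|\theta-\theta_0|\ge\delta$ and satisfies $F\le g\le\phi$ on the annulus $\delta'<|\theta-\theta_0|<\delta$.
\item Using $\phi(t_0,\theta_0)=F(t_0,\theta_0)$ and $g\le\phi$ on that annulus, you get $I_{1,\delta'}[\phi]+I_{2,\delta'}[g]\ge I_{1,\delta}[\phi]+I_{2,\delta}[F]$. This is the direction you need, by the same sign observation you already use to pass from $\delta'$ to $\delta$.
\end{itemize}

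\textbf{A smaller point.} $\phi$ and $F$ may have different increments $a$. A $2\pi$-periodic $\chi$ combined with the raw $\phi$ then does not produce an element of $\mathcal F_{t,2\pi}$, so your claim $\phi_n\in C^{1,1}_{t,2\pi}$ fails in general. First reduce to $\delta<\pi$, which is allowed by the monotonicity in $\delta$. Then define $\phi_n$ on one period around $\theta_0$ and extend it with the increment of $F$. The extension is smooth because $\phi_n=\psi_n+c_n$ near $\theta_0\pm\pi$.
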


\subsection{Comparison principles}
We now state and prove a comparison principle for $\eqref{eqPrimitivemin}$. The proof is similar with the proof of the comparison principle obtained in the similar approach to study the eigenvalues of random matrices \cite{bertucci2022spectral,bertucci2025new}. 
\begin{theorem}
\label{thm:comparison principle}Fix $m>0$.
Assume that $u\in \mathcal F_{t,2\pi}$ and $v\in \mathcal F_{t,2\pi}$ are respectively bounded viscosity subsolution and supersolution of $\eqref{eqPrimitivemin}$. If $u(0,.)\le v(0,.)$, then for all time $t$, $u(t,.)\le v(t,.)$.
\end{theorem}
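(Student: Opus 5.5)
We argue by contradiction, using the classical doubling of variables together with the nonlocal viscosity machinery of \cite{barles2008,bertucci2022spectral}. First we reduce to a strict subsolution with a penalization at the final time. Since $u,v\in\mathcal F_{t,2\pi}$ with the same period increment $a$ (both are primitives of probability measures; otherwise we argue with $u-v$ periodic), the difference $u(t,\theta)-v(t,\theta)$ is $2\pi$-periodic in $\theta$. Suppose that $\sup_{[0,T]\times\R}(u-v)>0$ for some $T>0$. For $\eta>0$ small we set $u_\eta(t,\theta)=u(t,\theta)-\eta t-\frac{\eta}{T-t}$. Because the nonlinearity $p\mapsto \frac1\pi(\arctan(\cdot)+\frac\pi2)$ does not involve $F$ itself, and the time derivative enters linearly, $u_\eta$ is a subsolution with a strict inequality $\le -\eta$. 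It also tends to $-\infty$ as $t\to T$, and we still have $\sup(u_\eta-v)>0$.

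We then double the variables. Consider
\[
\Phi(t,s,\theta,\xi)=u_\eta(t,\theta)-v(s,\xi)-\frac{|t-s|^2}{2\varepsilon}-\frac{|\theta-\xi|^2}{2\varepsilon},
\]
restricted to $\theta\in[0,2\pi]$ and $\xi$ near $\theta$. By periodicity of $u-v$, boundedness and semicontinuity, $\Phi$ attains its maximum at some point $(t_\varepsilon,s_\varepsilon,\theta_\varepsilon,\xi_\varepsilon)$. Standard arguments give $\frac{|t_\varepsilon-s_\varepsilon|^2+|\theta_\varepsilon-\xi_\varepsilon|^2}{\varepsilon}\to0$ and $u_\eta(t_\varepsilon,\theta_\varepsilon)-v(s_\varepsilon,\xi_\varepsilon)\to\sup(u_\eta-v)>0$. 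The condition $u(0,\cdot)\le v(0,\cdot)$, combined with semicontinuity, forces $t_\varepsilon,s_\varepsilon>0$ for $\varepsilon$ small. We apply Proposition \ref{prop: visco comparison principle} to $u_\eta$ at $(t_\varepsilon,\theta_\varepsilon)$ with test function $\phi(t,\theta)=v(s_\varepsilon,\xi_\varepsilon)+\frac{|t-s_\varepsilon|^2}{2\varepsilon}+\frac{|\theta-\xi_\varepsilon|^2}{2\varepsilon}$, suitably extended to lie in $C^2_{t,2\pi}$ (quadratic near the point, modified far away). We apply it to $v$ similarly. Both test functions have the same gradient $p_\varepsilon=(\theta_\varepsilon-\xi_\varepsilon)/\varepsilon$ and the same time derivative $(t_\varepsilon-s_\varepsilon)/\varepsilon$, so the denominators $\max((p_\varepsilon)_+,m)$ coincide. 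This is exactly where the truncation by $m$ is essential: the denominator is bounded below by $m$, and the map $x\mapsto \frac1\pi(\arctan(x/\max(p_+,m))+\frac\pi2)$ is increasing and Lipschitz in $x$ with constant $\frac{1}{\pi m}$. Subtracting the two inequalities gives
\[
\eta\le \frac{1}{\pi}\Big[\arctan\Big(\tfrac{X_v}{\max((p_\varepsilon)_+,m)}\Big)-\arctan\Big(\tfrac{X_u}{\max((p_\varepsilon)_+,m)}\Big)\Big],
\]
with $X_u=I_{1,\delta}[\phi](\theta_\varepsilon)+I_{2,\delta}[u_\eta(t_\varepsilon,\cdot)](\theta_\varepsilon)$ and $X_v$ defined analogously for $v$.

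The main obstacle is to show that $X_u-X_v\ge -o(1)$ as $\delta\to0$, and then $\varepsilon\to0$, which yields the contradiction $\eta\le0$ through monotonicity. For the local parts, $|I_{1,\delta}[\phi]|\le C\delta/\varepsilon$ since $\phi''=1/\varepsilon$ and $\int_{|\theta'|\le\delta}\theta'^2/\sin^2(\theta'/2)\,d\theta'=O(\delta)$; we let $\delta\to0$ first with $\varepsilon$ fixed. For the nonlocal parts, the maximality of $\Phi$ in the shifted directions $(\theta_\varepsilon\pm\theta',\xi_\varepsilon\pm\theta')$ gives
\[
u_\eta(t_\varepsilon,\theta_\varepsilon)-u_\eta(t_\varepsilon,\theta_\varepsilon\pm\theta')\ge v(s_\varepsilon,\xi_\varepsilon)-v(s_\varepsilon,\xi_\varepsilon\pm\theta'),
\]
so the difference of the $I_{2,\delta}$ integrands is nonnegative pointwise, whence $I_{2,\delta}[u_\eta]\ge I_{2,\delta}[v]$. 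This is precisely the ellipticity, or maximum principle, of $A_0$. One subtlety remains: $t_\varepsilon\ne s_\varepsilon$ in general, but this comparison uses the same time slices at which $\Phi$ is maximized, so it remains valid. Hence $X_u\ge X_v-C\delta/\varepsilon$, and letting $\delta\to0$ produces $\eta\le0$, a contradiction. Therefore $u\le v$ on $[0,T)$ for every $T$, which proves the theorem.
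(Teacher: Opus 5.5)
Your proof is correct and follows essentially the same route as the paper: a strict subsolution obtained by subtracting $\eta t$, doubling of variables with quadratic test functions in the local formulation, the sign of the $I_{2,\delta}$ difference from maximality along common shifts, an $O(\delta/\varepsilon)$ bound on the $I_{1,\delta}$ terms, and the $1/m$-Lipschitz bound that the truncation provides. The differences are only technical: you get compactness from the $2\pi$-periodicity of $u-v$ instead of the paper's $\beta(|\theta|^2+|\theta'|^2)$ penalization, you add $\eta/(T-t)$ to handle the terminal time, and you send $\delta\to0$ at fixed $\varepsilon$ rather than taking $\delta=\varepsilon^2$ (note that this periodicity follows from $u(0,\cdot)\le v(0,\cdot)$ together with the time-independent increments, which forces $a_u=a_v$, and not from $u,v$ being primitives of probability measures, which the theorem does not assume).
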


\begin{proof}
Let $\gamma>0$, instead of considering $u$ we could consider $u_{\gamma}(t,\theta)=u(t,\theta)-\gamma t$ which is a $\gamma$ strict subsolution in the sense that if $\phi\in C^{1,1}_{t,2\pi}$, $\delta>0$ and $(t_0,\theta_0)\in (0,+\infty)\times \R$ are such that $(u_{\gamma}-\phi) (t_0,\theta_0)=0$ and $(u_{\gamma}-\phi)(t,\theta)\le 0$ then for any $(t,x)\in B((t_0,x_0),\delta)$, the following holds: $$\partial_t \phi(t,\theta)+\cfrac{1}{\pi}\left(\arctan\left( \cfrac{I_{1,\delta}[\phi(t_0,.)](\theta_0)+I_{2,\delta}[u_\gamma(t_0,.)](\theta_0)}{\max((\partial_\theta \phi(t,\theta))_+,m)}\right)+\cfrac{\pi}{2}\right)\le -\gamma <0.$$
Hence if we prove that for all $t,\theta$, $u_{\gamma}(t,\theta)\le v(t,\theta)$ for all $\gamma>0$ , then by taking the limit $\gamma\to 0$ we shall recover $u\le v$. Hence, we can assume $u$ is a $\gamma$ strict subsolution in the proof and not just a subsolution. 
\newline
We argue by contradiction and suppose that there exists $t_0>0$, $\theta_0\in \R $ such that $u(t_0,\theta_0)> v(t_0,\theta_0)$. Let $T>t_0$ and use the classical technique of doubling variables. Thanks to the hypothesis, there exists $\alpha>0$ such that for all $\varepsilon>0$:
$$\sup\left\{u(t,\theta)-v(s,\theta')-\cfrac{1}{2\varepsilon}(\theta-\theta')^2-\cfrac{1}{2\varepsilon}(t-s)^2, \, t,s\in [0,T], \, \theta,\theta'\in \R^2\right\}>\alpha.$$
We shall justify that this supremum is actually a maximum by an usual localisation argument. Indeed, for a bounded $F\in \mathcal F_{2\pi}$ we have $a\in \R$ such that $F(t,\theta+2\pi)=F(t,\theta)+a$ for all $t,\theta$ and so we can deduce that there exists $K>0$ such that for all $t\ge 0$, $\theta\in \R$, $|F(t,\theta)|\le K(1+|\theta|)$. 
\newline
So for $u$ and $v$ in $\mathcal F_{2\pi}$ and bounded there exists $L>0$ such that for all $(t,s)\in \R^+$, for all $(\theta,\theta')\in \R^2$, $u(t,\theta)-v(s,\theta')\le L(1+|\theta|+|\theta'|)$ (this fact shall be called sublinearity). 
\newline
Now we can consider for $\beta>0$, $\varepsilon >0$ the following supremum: 
\begin{equation}
\label{eq:pointmax}
\sup\left\{u(t,\theta)-v(s,\theta')-\cfrac{1}{2\varepsilon}(\theta-\theta')^2-\cfrac{1}{2\varepsilon}(t-s)^2-\beta(|\theta|^2+|\theta'|^2), \, t,s\in [0,T], \, \theta,\theta'\in \R^2\right\}.\end{equation}
For $\beta>0$ small enough this supremum is greater than $\alpha/2>0.$ Moreover thanks to the sublinearity and the fact that $u$ is usc and $v$ is lsc, this supremum is a maximum.
\newline
Let $t^*,s^*,\theta^*,\theta'^*\in [0,T]^2\times \R^2$  be a point of maximum of \eqref{eq:pointmax}. We can classically assume that for $\varepsilon$ small enough, $t^*$ and $s^*$ are positive thanks to the fact that $\alpha>0$.
\newline
We want to use as tests functions: $$\phi_1(t,\theta)=v(s^*,\theta'^*)+\cfrac{1}{2\varepsilon}(\theta-\theta'^*)^2+\cfrac{1}{2\varepsilon}(t-s^*)^2+\beta(|\theta|^2+|\theta'^*|^2),$$ for $u$ and: $$\phi_2(s,\theta')=u(t^*,\theta^*)-\cfrac{1}{2\varepsilon}(\theta^*-\theta')^2-\cfrac{1}{2\varepsilon}(t^*-s)^2-\beta(|\theta^*|^2+|\theta'|^2),$$ for $v$.
\newline
The issue is that these functions are not in $C^{1,1}_{t,2\pi}$, we shall modify them a bit.
\newline
Take a small $\delta>0$ that will be specified later on.  We can find a function $\tilde \phi_1$ such that $\tilde \phi_1$ is equal to $\phi_1$ in $B((t^*,\theta^*),\delta)$ and which is in $C^{1,1}_{t,2\pi}$. Indeed for $\delta<\pi/2$, by localisation we can construct $\tilde \phi_1$ such that $\tilde \phi_1$ is equal to $\phi_1$ in $B((t^*,\theta^*),\delta)$ and impose that $\tilde\phi_1(t,\theta^*+\pi)=\tilde\phi_1(t,\theta^*-\pi)+1$ for all $t$ to define $\tilde\phi_1(t,.)$ on $[\theta^*-\pi,\theta^*+\pi]$ and then extend periodically $\tilde\phi_1(t,.)$ for all $t>0$.
We can do the same for $\phi_2$. We still call $\phi_1$ and $\phi_2$ these modifications of $\phi_1$ and $\phi_2$ that are now in $C^{1,1}_{t,2\pi}$. 
\newline
Moreover to lighten the computations that shall follow, we can forget the term in $\beta$. Indeed, if we look at $\gamma_\beta(\theta)=\beta(|\theta|^2+|\theta'^*|^2)$ we have that $\gamma_\beta''$ is uniformly bounded in $\theta$ by $\beta$. Hence, we have that $I_{1,\delta}[\gamma_\beta](\theta'^*)$ converges uniformly in $\delta$ to 0 when $\beta$ converges to 0. So if we let $\beta$ converges to 0 in the subviscosity formulation this part disappears and same for the super viscosity formulation \cite{barles2008}. We still write $\phi_1$ and $\phi_2$ these functions without the term $\beta(|\theta|^2+|\theta'|^2)$.
\newline
By using $\phi_1$ and $\phi_2$ as test functions in the definition of subsolution and supersolution we have that: $$\cfrac{1}{\varepsilon}(t^*-s^*)+\cfrac{1}{\pi}\left(\arctan\left( \cfrac{I_{1,\delta}[\phi_1(t^*,.)](\theta^*)+I_{2,\delta}[u(t^*,.)](\theta^*)}{\max\left(\cfrac{1}{\varepsilon}(\theta^*-\theta'^*)_+,m\right)}\right)+\cfrac{\pi}{2}\right)\le -\gamma$$
and: $$\cfrac{1}{\varepsilon}(t^*-s^*)+\cfrac{1}{\pi}\left(\arctan\left(\cfrac{ I_{1,\delta}[\phi_2(s^*,.)](\theta'^*)+I_{2,\delta}[v(s^*,.)](\theta'^*)}{\max\left(\cfrac{1}{\varepsilon}(\theta^*-\theta'^*)_+,m\right)}\right)+\cfrac{\pi}{2}\right)\ge 0.$$
We subtract the first inequality to the second and we have: \begin{equation}
\arctan\left(\cfrac{ I_{1,\delta}[\phi_2(s^*,.)](\theta'^*)+I_{2,\delta}[v(s^*,.)](\theta'^*)}{\max\left(\cfrac{1}{\varepsilon}(\theta^*-\theta'^*)_+,m\right)}\right)-\arctan\left( \cfrac{I_{1,\delta}[\phi_1(t^*,.)](\theta^*)+I_{2,\delta}[u(t^*,.)](\theta^*)}{\max\left(\cfrac{1}{\varepsilon}(\theta^*-\theta'^*)_+,m\right)}\right)\ge \pi\gamma.
\label{ineqfinal}
\end{equation}
Firstly, we look at the $I_{2,\delta}$ part: 
\begin{align*}
&I_{2,\delta}[v(s^*,.)](\theta'^*)-I_{2,\delta}[u(t^*,.)](\theta^*)=\\
&\int_{\T\cap|z|>\delta}\cfrac{v(s^*,\theta'^*)-v(s^*,\theta'^*+z)-v(s^*,\theta'^*-z)-(2u(t^*,\theta^*)-u(t^*,\theta^*+z)-u(t^*,\theta^*-z))}{\sin^2(z/2)}\cfrac{dz}{4}.
\end{align*}
Using that $(t^*,s^*,\theta^*,\theta'^*)$ is a point of maximum we have that for all $z\in \R$: 
\begin{align*}
u(t^*,\theta^*)-v(s^*,\theta'^*)\ge u(t^*,\theta^*+z)-v(s^*,\theta'^*+z)\\
u(t^*,\theta^*)-v(s^*,\theta'^*)\ge u(t^*,\theta^*-z)-v(s^*,\theta'^*-z).
\end{align*}
Adding these two inequalities we see that the previous numerator is actually non negative. Hence we have: 
\begin{equation}
\label{visco comp 2}
I_{2,\delta}[v(s^*,.)](\theta'^*)-I_{2,\delta}[u(t^*,.)](\theta^*)\le 0.
\end{equation}
Then for the terms in $I_{1,\delta}$ we notice that since $\phi_1(t^*,.)$ (resp. $\phi_2(s^*,.)$) is bounded in $C^2$ by $\varepsilon^{-1}$ in $B(\theta'^*,\delta)$ (resp. $B(\theta^*,\delta)$), we have: 
\begin{equation}
\label{visco comp3}
I_{1,\delta}[\phi_2(s^*,.)](\theta'^*)-I_{1,\delta}[\phi_1(t^*,.)](\theta^*)\le  \cfrac{2\delta}{\varepsilon\pi}. 
\end{equation}
Using \eqref{ineqfinal}, \eqref{visco comp 2}, \eqref{visco comp3}, and the fact that $\arctan$ is one Lipschitz and non-decreasing:
\begin{equation}
\begin{split}
\pi\gamma&\le\arctan\left(\cfrac{ I_{1,\delta}[\phi_2(s^*,.)](\theta'^*)+I_{2,\delta}[v(s^*,.)](\theta'^*)}{\max\left(\cfrac{1}{\varepsilon}(\theta^*-\theta'^*)_+,m\right)}\right)-\arctan\left( \cfrac{I_{1,\delta}[\phi_1(t^*,.)](\theta^*)+I_{2,\delta}[u(t^*,.)](\theta^*)}{\max\left(\cfrac{1}{\varepsilon}(\theta^*-\theta'^*)_+,m\right)}\right)\\
&\le \cfrac{ I_{1,\delta}[\phi_2(s^*,.)](\theta'^*)+I_{2,\delta}[v(s^*,.)](\theta'^*)}{\max\left(\cfrac{1}{\varepsilon}(\theta^*-\theta'^*)_+,m\right)}-\cfrac{I_{1,\delta}[\phi_1(t^*,.)](\theta^*)+I_{2,\delta}[u(t^*,.)](\theta^*)}{\max\left(\cfrac{1}{\varepsilon}(\theta^*-\theta'^*)_+,m\right)}\\
&\le \cfrac{2\delta}{\varepsilon\pi\max\left(\cfrac{1}{\varepsilon}(\theta^*-\theta'^*)_+,m\right)}\le \cfrac{2\delta}{\varepsilon m\pi}
\end{split}
\end{equation}
Let $\delta=\varepsilon^2$ and let $\varepsilon$ goes to $0$ to obtain a contradiction since $\gamma>0$.
\end{proof}
\begin{remark}
This comparison principle was obtained using the parabolicity of the PDE which means at the level of particles that they repeal each others. So, we shall obtain in Section \ref{sect:compaparticles} a comparison principle at the level of particles.
\end{remark}
\subsection{Consequences of comparison principles for truncated PDEs}
Using the comparison principle, we shall prove results of existence and uniqueness of viscosity solutions of $\eqref{eqPrimitivemin}$. We also prove a counterpart of the minimum principle for the viscosity solutions of $\eqref{eqPrimitivemin}$.
\subsubsection{Uniqueness of viscosity solutions of the truncated PDE}
We can state a uniqueness result for viscosity solutions of \eqref{eqPrimitivemin}.
\begin{proposition}
\label{prop:uniqueness}
Given $m>0$ and $F_0\in \mathcal F_{2\pi}$ a bounded non-decreasing right continuous function, there exists at most one viscosity solution of \eqref{eqPrimitivemin} with initial condition $F_0$.
\end{proposition}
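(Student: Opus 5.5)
The plan is to obtain uniqueness as a direct consequence of the comparison principle, Theorem \ref{thm:comparison principle}, applied twice with the roles of the two solutions exchanged.

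Let $F$ and $G$ be two viscosity solutions of \eqref{eqPrimitivemin} with initial condition $F_0$ in the sense of Definition \ref{def:visco}. Replacing $F$ by its usc envelope $F^*$ if needed, $F^*$ is a subsolution and $G_*$ is a supersolution. Both are bounded elements of $\mathcal F_{t,2\pi}$: the solutions are locally bounded and belong to $\mathcal F_{t,2\pi}$, and taking envelopes preserves the relation $F(t,\theta+2\pi)=F(t,\theta)+a$.

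We then compare the initial traces. By definition of the initial condition,
$$F^*(0,\cdot)\le F_0^*,\qquad G_*(0,\cdot)\ge (F_0)_*.$$
This is the main obstacle: $F_0$ is only non-decreasing and right continuous, so $F_0^*$ and $(F_0)_*$ differ at the jumps of $F_0$. The inequality $F^*(0,\cdot)\le G_*(0,\cdot)$ therefore does not follow pointwise in general.

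Two remedies are available.

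\begin{itemize}
\item \emph{Refining the comparison proof.} In the doubling-of-variables argument, the maximum point can only sit at $t^*=0$ or $s^*=0$ if
$$\limsup\big(u(t,\theta)-v(s,\theta')\big)>0 \quad\text{as } t,s\to 0,\ |\theta-\theta'|\to 0.$$
For $u=F^*$ and $v=G_*$ this limsup is bounded by $\sup_\theta\big(F_0^*(\theta)-(F_0)_*(\theta)\big)$, taken over nearby points. The points $(\theta,\theta')$ approach each other with penalization $\frac{1}{2\varepsilon}|\theta-\theta'|^2$, while the initial data are monotone. One would try to show that the maximizing sequence cannot concentrate at a jump, using monotonicity to shift $\theta$ slightly left and $\theta'$ slightly right.
\item \emph{Restricting to continuous data.} Under the setting of interest, \eqref{hypothesis min}, $F_0$ is strictly increasing. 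If $F_0$ is moreover continuous, then $F_0^*=(F_0)_*=F_0$, and $F^*(0,\cdot)\le F_0\le G_*(0,\cdot)$ holds directly.
\end{itemize}

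I would first try to reduce to the continuous case this way, and otherwise treat jumps by the shift argument above.

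With the initial ordering established, Theorem \ref{thm:comparison principle} gives $F^*\le G_*$ on $[0,\infty)\times\R$. Exchanging $F$ and $G$ gives $G^*\le F_*$. Hence
$$F^*\le G_*\le G\le G^*\le F_*\le F\le F^*,$$
so all of these functions coincide and $F=G$, which proves uniqueness.
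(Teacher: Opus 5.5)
\textbf{There is a genuine gap.} Your argument is complete only when $F_0$ is continuous. The proposition, however, is stated for an arbitrary bounded non-decreasing right-continuous $F_0$, and jumps are not excluded even under $(H_m)$: take $\mu=\frac12\,\frac{d\theta}{2\pi}+\frac12\delta_0$, which satisfies $(H_{1/(4\pi)})$.

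For discontinuous data your first remedy is not just unfinished; it aims at a false conclusion. Its target is $F^*\le G_*$ on $[0,\infty)\times\R$, which feeds your closing chain. With $G=F$ this would give $F^*\le F_*$, i.e.\ continuity of $F$. That fails whenever $F(0,\cdot)=F_0$ jumps at some $x_0$, since then $F^*(0,x_0)\ge F_0(x_0)>F_0(x_0^-)\ge F_*(0,x_0)$. Such solutions exist: the one constructed in Proposition~\ref{prop:existence} has exactly this trace.

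The same obstruction appears inside the doubling argument. At $t=s=0$ and $\theta=\theta'=x_0$, the doubled functional is essentially $F_0(x_0)-F_0(x_0^-)$, whatever $\varepsilon$ is. So the maximizer really can sit at a jump at time $0$, and no monotonicity argument within that fixed functional rules it out.

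\textbf{The missing idea.} You should shift the supersolution itself and use Theorem~\ref{thm:comparison principle} as a black box; this is how the paper argues. Since $(E_m)$ is invariant under spatial translations and under addition of constants, $v_\varepsilon(t,x):=G_*(t,x+\varepsilon)+\varepsilon$ is again a bounded supersolution in $\mathcal F_{t,2\pi}$. Monotonicity and right continuity of $F_0$ give
$F(0,x)\le F_0^*(x)=F_0(x)\le (F_0)_*(x+\varepsilon)\le G_*(0,x+\varepsilon)$,
so the initial ordering holds. Comparison then yields $F(t,x)\le G(t,x+\varepsilon)+\varepsilon$. Letting $\varepsilon\to0$ and using that $G$ is usc (it is a subsolution) gives $F\le G$, and by symmetry $G\le F$.

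So the correct conclusion is equality $F=G$ of two usc functions, not $F^*\le G_*$. Your instinct to move $\theta'$ slightly to the right is right. It has to be implemented as a translation of $G_*$, not as a property of the maximizer of a fixed functional.
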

\begin{proof}
Assume that there exists $F$ and $G$ which are viscosity solutions of $\eqref{eqPrimitivemin}$ with the same initial data.
Let $\varepsilon>0$. Since $F$ and $G$ have the same initial condition $F_0$ we get that for all $x\in\R$: 
$$F(0,x)=F^*(0,x)\le (F_0)^*(x)\le (F_0)_*(x+\varepsilon)\le G_*(0,x+\varepsilon) .$$
Using Theorem \ref{thm:comparison principle} with $u=F$ as a viscosity subsolution and $v_\varepsilon(t,x)=G_*(t,x+\varepsilon)+\varepsilon$ as a supersolution to \eqref{eqPrimitivemin}, we get that for all $t\ge0$, $\forall x\in\R$: $$F(t,x)\le G_*(t,x+\varepsilon).$$
Letting $\varepsilon$ goes to $0$ yields that for all $t\ge 0$, $\forall x\in\R$, $$F(t,x)\le G(t,x).$$
By symmetry of $F$ and $G$ we get that $F=G$.
\end{proof}
\subsubsection{Monotone principle}
First let us define what replace the condition $ u\ge m$ for a $m>0$ at the level of a primitive of $u\in\mesT$.
\begin{definition}\label{def:hypomin} Fix $m>0$.
A function $F:\R\to \R$ (resp. $F:\R^+\times \R\to\R$) is said to satisfied the hypothesis \eqref{hypothesis min} if:
\begin{equation}
\label{hypothesis min}
\tag{$H_m$}
\begin{split}
&\forall h\ge 0,\,\forall\theta\in\R,\, F(\theta+h)-F(\theta)\ge mh \\
(\text{resp.\,}
&\forall t\ge 0,\,\forall h\ge 0,\,\forall\theta\in\R,\, F(t,\theta+h)-F(t,\theta)\ge mh).
\end{split}
\end{equation} 
\end{definition}
As explained, for a smooth solution $\mu$ of \eqref{eq:randompolpde} the minimum principle states that $t\mapsto\min_\T\mu(t,.)$ is non-decreasing. We state a counterpart of this result at the level of viscosity solutions of $\eqref{eqPrimitivemin}$. 
\begin{proposition}
\label{propmonotoneprinciple}
Fix $m>0$.
Let $F$ be a bounded viscosity solution of $\eqref{eqPrimitivemin}$ with initial condition $F_0$. Assume that $F_0 \in \mathcal F_{2\pi}$ satisfies $\eqref{hypothesis min}$ and is a bounded non-decreasing right continuous function. Then $F$ satisfies $\eqref{hypothesis min}$.
\end{proposition}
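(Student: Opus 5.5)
The strategy is to use the translation invariance of \eqref{eqPrimitivemin} together with the comparison principle, Theorem \ref{thm:comparison principle}. Fix $h\ge 0$ and set
$$G_h(t,\theta):=F(t,\theta+h)-mh .$$
By the Remark following \eqref{eqPrimitive}, which applies equally to \eqref{eqPrimitivemin}, $G_h$ is again a viscosity solution of \eqref{eqPrimitivemin}. The reason is that the equation involves only $\partial_t$, $\partial_\theta$ and $A_0$, all of which commute with spatial translations and annihilate additive constants. To verify this at the level of test functions, note that if $\phi$ touches $G_h$ from above at $(t_0,\theta_0)$, then $\phi(t,\theta-h)+mh$ touches $F$ from above at $(t_0,\theta_0+h)$, and the two sides of the inequalities \eqref{subsol} and \eqref{supersol} coincide. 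Moreover $G_h$ lies in $\mathcal F_{t,2\pi}$ with the same period increment and is bounded.

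The goal is then to show $F(t,\theta)\le G_h(t,\theta)$ for all $t,\theta$, which is exactly \eqref{hypothesis min}. At time $0$, hypothesis \eqref{hypothesis min} on $F_0$ gives $F_0(\theta)\le F_0(\theta+h)-mh$. Because of the semicontinuous envelopes, I would proceed as in the proof of Proposition \ref{prop:uniqueness}. Apply Theorem \ref{thm:comparison principle} with the subsolution $u=F$ (more precisely $F^*$) and the supersolution
$$v_\varepsilon(t,\theta)=F_*(t,\theta+h+\varepsilon)-mh+m\varepsilon,$$
which is a supersolution by translation invariance. At $t=0$ we need $F^*(0,\theta)\le (F_0)^*(\theta)\le (F_0)_*(\theta+\varepsilon)$, which holds since $F_0$ is non-decreasing. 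We also need $(F_0)_*(\theta+\varepsilon)\le (F_0)_*(\theta+h+\varepsilon)-mh\le F_*(0,\theta+h+\varepsilon)-mh$. Here \eqref{hypothesis min} passes to the lower envelope $(F_0)_*$ by taking liminf along the shifted points. The extra term $m\varepsilon\ge0$ only helps. Theorem \ref{thm:comparison principle} then yields $F(t,\theta)\le F_*(t,\theta+h+\varepsilon)-mh+m\varepsilon$ for all $t\ge 0$.

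Letting $\varepsilon\to0$ gives $F(t,\theta)\le \liminf_{\varepsilon\downarrow0}F_*(t,\theta+h+\varepsilon)-mh$. This is the main technical obstacle: the right-limit of $F_*$ must be controlled by $F(t,\theta+h)$. I would handle it by applying the inequality at the point $\theta-\varepsilon'$ with a shift $h$, and then using that both sides are evaluated consistently. Concretely, replacing $h$ by $h-\eta$ for small $\eta>0$ and $\varepsilon<\eta$ gives
$$F(t,\theta)\le F_*(t,\theta+h-\eta+\varepsilon)-m(h-\eta)\le F(t,\theta+h)-m(h-\eta),$$
provided $F(t,\cdot)$ is non-decreasing, so that $F_*(t,x)\le F(t,y)$ for $x<y$. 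Monotonicity in $\theta$ itself follows from the case $m=0$ of the same argument, using the comparison with shifted copies valid for the truncated equation. Finally, sending $\eta\to0$ gives $F(t,\theta+h)-F(t,\theta)\ge mh$, so $F$ satisfies \eqref{hypothesis min}. The case $h=0$ is trivial.
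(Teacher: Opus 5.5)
Your proof is correct and follows essentially the same route as the paper: you compare the subsolution $F+mh$ with the spatially shifted supersolution $F_*(t,\cdot+h+\varepsilon)$ via Theorem \ref{thm:comparison principle}, and you check the initial ordering through the semicontinuous envelopes of $F_0$. The only difference is the final limit $\varepsilon\to0$, which the paper handles in one line because $F$ is upper semicontinuous: $F_*\le F$ and $\limsup_{\varepsilon\to0}F(t,\theta+h+\varepsilon)\le F(t,\theta+h)$, so your detour through $h-\eta$ and monotonicity in $\theta$ is valid but unnecessary.
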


\begin{proof}
Fix $h\ge 0$. By hypothesis on $F(0,.)$, we get that for all $\varepsilon>0$, for all $x\in\R$: $$F(0,x)+mh\le F_0(x)+mh\le F_0(x+h)\le (F_0)_*(x+h+\varepsilon)\le F_*(0,x+h+\varepsilon).$$
Using Proposition \ref{prop: visco comparison principle} with $u=F+mh$ as a subsolution and $v(t,x)=F_*(t,x+h+\varepsilon)$ as a supersolution we get that for all $\varepsilon>0$, $\forall t\ge 0$, $\forall x\in\R$:
$$F(t,x)+mh\le F_*(t,x+h+\varepsilon).$$
Since $F$ is usc, letting $\varepsilon$ goes to 0 gives the result.
\end{proof}

\begin{remark}
\label{remark fonction test}
Suppose that $F$ satisfies the hypothesis \eqref{hypothesis min} for $m>0$. Then we can replace in the definition of viscosity sub and supersolution $\max((\partial_\theta\phi(t_0,\theta_0))_+,m)$ by $\partial_\theta\phi(t_0,\theta_0)$. Indeed, for any $t_0,h\ge 0$, if $\theta_0$ is a point of maximum of $F(t_0,.)-\phi(t_0,.)$ then: $$mh+F(t_0,\theta_0)-\phi(t_0,\theta_0+h)\le F(t_0,\theta_0+h)-\phi(t_0,\theta_0+h)\le F(t_0,\theta_0)-\phi(t_0,\theta_0)$$ where we first used that $F(t_0,.)$ satisfies \eqref{hypothesis min} and then the fact that $\theta_0$ is a point of maximum of $F(t_0,.)-\phi(t_0,.)$. Hence, we deduce that $\partial_\theta \phi(t_0,\theta_0)\ge m$.
\newline
This explains why under the hypothesis \eqref{hypothesis min} to find viscosity solutions of $\eqref{eqPrimitive}$ in the sense of Definition \eqref{def:viscoeq} we can work with solutions of \eqref{eqPrimitivemin}.
\end{remark}

\begin{remark}
\label{remark: solution en m}
Assume that $F$ satisfies \eqref{hypothesis min} and is a viscosity solution of \eqref{eqPrimitivemin} for $m>0$. Then for all $0<m'\le m$, $F$ satisfies $(H_{m'})$ and is a viscosity solution of $(E_{m'})$ using Remark \ref{remark fonction test}. 
\end{remark} 
\subsubsection{Existence of a viscosity solution}
In this section we prove that there exists a viscosity solution of \eqref{eqPrimitivemin} starting from an initial data that satisfies \eqref{hypothesis min} for a given $m>0$. We first prove the result assuming that the initial condition is continuous. In this case we prove that the viscosity solution is continuous for every time. This proof is quite standard in viscosity theory. We then prove the result under the only assumption that the initial condition is the cumulative distribution function of a measure on $\T$. This point is more technical since we have to pay attention to the fact that the initial condition is just usc and not continuous as in the previous case.
\newline
We first state a general lemma about viscosity solutions that shall be used in the proof of existence of a viscosity solution of \eqref{eqPrimitivemin}. We sketch the proof but a detailed proof can be found in \cite{barles2008}. 
\begin{lemma}[Stability result]
\label{lemma:stability}
Fix $m>0$. Let $(F_n)_{n\in\N}$ be a bounded sequence of viscosity subsolutions of \eqref{eqPrimitivemin} (resp. viscosity supersolutions). The function $F^*(t_0,\theta_0):=\underset{n\to+\infty,\,t\to t_0,\,\theta\to\theta_0}{\limsup} F_n(t,\theta)$ (resp. $F_*(t_0,\theta_0):=\underset{n\to+\infty,\,t\to t_0,\,\theta\to\theta_0}{\liminf} F_n(t,\theta)$) is a viscosity subsolution (resp. a visocosity supersolution) of \eqref{eqPrimitivemin}.
\end{lemma}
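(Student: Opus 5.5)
We prove the subsolution case; the supersolution case follows by the symmetric argument with minima in place of maxima and $\liminf$ in place of $\limsup$. The argument is the standard half-relaxed-limit stability argument of Barles and Imbert, adapted to the nonlocal term through the splitting $A_0=I_{1,\delta}+I_{2,\delta}$ of Proposition \ref{prop: visco comparison principle}.

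\textbf{Setup.} By construction $F^*$ is usc. Since the $F_n$ are bounded in $\mathcal F_{t,2\pi}$, $F^*$ is finite and still satisfies a relation $F^*(t,\theta+2\pi)=F^*(t,\theta)+a$. This requires the shift constants $a_n$ to be the same for all $n$, or, in the intended applications, equal to $1$; otherwise one passes to a subsequence along which $a_n\to a$. Fix $\phi\in C^{1,1}_{t,2\pi}$ and a strict maximum point $(t_0,\theta_0)$ of $F^*-\phi$. By the strictness and the upper semicontinuity, the classical argument produces $n_k\to\infty$ and local maximum points $(t_k,\theta_k)$ of $F_{n_k}-\phi$ on a fixed closed ball $\bar B((t_0,\theta_0),r)$ such that
\[
(t_k,\theta_k)\to(t_0,\theta_0),\qquad F_{n_k}(t_k,\theta_k)\to F^*(t_0,\theta_0).
\]
One first picks $(s_k,y_k)\to(t_0,\theta_0)$ with $F_{n_k}(s_k,y_k)\to F^*(t_0,\theta_0)$ and then uses the strict maximum to force the maximizers to converge. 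Adding a small quadratic perturbation to $\phi$, which is harmless in the limit, we may take these maxima strict.

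\textbf{Applying the subsolution property.} By Proposition \ref{prop: visco comparison principle}, which also holds for $C^{1,1}$ test functions after a standard approximation, we get for every $\delta\in(0,r/2)$
\[
\partial_t\phi(t_k,\theta_k)+\frac1\pi\left(\arctan\left(\frac{I_{1,\delta}[\phi(t_k,.)](\theta_k)+I_{2,\delta}[F_{n_k}(t_k,.)](\theta_k)}{\max((\partial_\theta\phi(t_k,\theta_k))_+,m)}\right)+\frac\pi2\right)\le0 .
\]
The truncation by $m$ keeps the denominator bounded below. This is exactly what makes the passage to the limit work here, whereas the untruncated equation \eqref{eqPrimitive} would not allow it.

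\textbf{Passing to the limit; main obstacle.} The terms $\partial_t\phi$, $\partial_\theta\phi$ and $I_{1,\delta}[\phi]$ converge by continuity. Moreover $I_{1,\delta}[\phi]=O(\delta)$ uniformly, because $\partial_\theta\phi$ is Lipschitz. The main obstacle is the nonlocal term $I_{2,\delta}[F_{n_k}(t_k,.)](\theta_k)$. Its integrand contains $2F_{n_k}(t_k,\theta_k)-F_{n_k}(t_k,\theta_k\pm z)$. The first part converges to $2F^*(t_0,\theta_0)$. For the second, the definition of $F^*$ gives
\[
\limsup_k F_{n_k}(t_k,\theta_k\pm z)\le F^*(t_0,\theta_0\pm z).
\]
The kernel is bounded on $|z|>\delta$ and the functions are uniformly bounded on $[-\pi,\pi]$, so the reverse Fatou lemma yields
\[
\liminf_k I_{2,\delta}[F_{n_k}(t_k,.)](\theta_k)\ge I_{2,\delta}[F^*(t_0,.)](\theta_0).
\]
Since $\arctan$ is nondecreasing, the denominator converges to a positive limit, and the term enters with a plus sign in a "$\le0$" inequality, we obtain the inequality of Proposition \ref{prop: visco comparison principle} for $F^*$ with the term $I_{2,\delta}[F^*(t_0,.)]$.

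\textbf{Conclusion.} It remains to convert this back to the definition with $A_0[\phi]$. Because $F^*-\phi$ has its maximum at $(t_0,\theta_0)$, we have $I_{2,\delta}[F^*]\ge I_{2,\delta}[\phi]$ there; the numerator is a difference at the base point, so the additive constant does not matter. Monotonicity of $\arctan$ then gives
\[
\partial_t\phi+\frac1\pi\left(\arctan\left(\frac{I_{1,\delta}[\phi]+I_{2,\delta}[\phi]}{\max((\partial_\theta\phi)_+,m)}\right)+\frac\pi2\right)\le0 \quad\text{at }(t_0,\theta_0),
\]
and the numerator equals $A_0[\phi(t_0,.)](\theta_0)$. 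Hence \eqref{subsol} holds and $F^*$ is a subsolution. Since the error from $I_{1,\delta}$ is $O(\delta)$, one may alternatively let $\delta\to0$ at this stage.
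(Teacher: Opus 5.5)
Your proof is correct and uses the same half-relaxed-limit argument as the paper, which passes to the limit at maximum points $(t_n,\theta_n)$ of $F_n-\phi$ converging to $(t_0,\theta_0)$. You also carry out the step the paper only flags: since these maxima are merely local, you use the $I_{1,\delta}+I_{2,\delta}$ formulation, control $I_{2,\delta}[F_{n_k}(t_k,.)](\theta_k)$ by Fatou together with the definition of $F^*$, and then return to $A_0[\phi]$ through the global maximum of $F^*-\phi$.
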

\begin{proof}
Let $\phi\in C_{t,2\pi}^{1,1}$ and $(t_0,\theta_0)$ a point of strict maximum of $F^*-\phi$. For the moment let us consider a sequence $(t_n,\theta_n)$ such that $F_n-\phi$ has a point of maximum at $(t_n,\theta_n)$ and such that $(t_n,\theta_n)\underset{n\to+\infty}{\to}(t_0,\theta_0)$. As for $n\ge 0$, $F_n$ is a viscosity subsolution we get 
\begin{equation}
\label{lemma:stabineq}
\partial_t\phi(t_n,\theta_n)+\cfrac{1}{\pi}\left(\arctan\left(\cfrac{A_0[\phi(t_n,.)](\theta_n)}{\max((\partial_\theta \phi(t_n,\theta_n))_+,m)}\right)+\cfrac{\pi}{2}\right)\le 0.
\end{equation}
Since we have $$A_0[\phi(t_n,.)](\theta_n)=\cfrac{1}{4\pi}\int_{-\pi}^\pi\cfrac{\phi(t_n,\theta_n)-\phi(t_n,\theta_n-\theta)-\theta\partial_\theta\phi(t_n,\theta_n)}{\sin^2(\frac{\theta}{2})}\,\,d\theta$$ and $\phi$ is in $C_{t,2\pi}^{1,1}$, the dominated convergence theorem yields $$A_0[\phi(t_n,.)](\theta_n)\underset{n\to+\infty}{\longrightarrow}A_0[\phi(t_0,.)](\theta_0).$$
Passing to the limit in the inequality \eqref{lemma:stabineq} gives that $F^*$ is a viscosity subsolution. To make this proof complete, we should use the local formulation of viscosity solutions of \eqref{eqPrimitivemin} because the points of maximum $(t_n,\theta_n)$ as they are constructed can only be considered in a ball around $(t_0,\theta_0)$ and not globally.
The same argument holds for $F_*$.
\end{proof}
\begin{proposition}
\label{prop:existence}
Fix $m>0$ and $\mu_0\in\mesT$ and write $F_0:=F_{\mu_0}$ the cumulative distribution function of $\mu_0$. 
Assume that $F_0$ satisfies \eqref{hypothesis min}. 
\newline
$\bullet$ If $F_0$ is continuous then there exists a continuous viscosity solution of \eqref{eqPrimitivemin} with initial condition $F_0$
\newline 
$\bullet$
Even if $F_0$ is not continuous there exists a viscosity solution of \eqref{eqPrimitivemin} with initial condition $F_0$. 
\end{proposition}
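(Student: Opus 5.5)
We construct the solution by Perron's method, using the comparison principle (Theorem \ref{thm:comparison principle}) and the stability result (Lemma \ref{lemma:stability}).

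\textbf{Barriers.} Since $A_0[\theta\mapsto a\theta+b]=0$ (the second difference of an affine function vanishes), the function $G(t,\theta)=F_0(\theta)-t/2$ would be an exact solution if $F_0$ were affine. For general $F_0$ the argument of $\arctan$ is bounded by a constant $C$ on smooth test functions, but not for rough ones. We therefore use the crude bounds $0\le \frac{1}{\pi}(\arctan(\cdot)+\frac{\pi}{2})\le 1$: the function $\overline F(t,\theta)=F_0^*(\theta)$ is a supersolution, and $\underline F(t,\theta)=(F_0)_*(\theta)-t$ is a subsolution. Indeed, at a maximum of $\underline F-\phi$ we have $\partial_t\phi=-1$, and the bracket is at most $1$. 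Both barriers belong to $\mathcal F_{t,2\pi}$ with constant $a=1$, and both are bounded on $[0,T]\times[0,2\pi]$.

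\textbf{Continuous case.} Assume $F_0$ is continuous. Sharpen the barriers as $F_0(\theta\pm\eta)\pm\omega(\eta)\mp Ct$-type functions built from mollifications $F_0^\eta$ of $F_0$. These satisfy $F_0^\eta\in C^\infty$, $|F_0^\eta-F_0|\le\omega(\eta)$, and have bounded $A_0[F_0^\eta]$. Then $F_0^\eta(\theta)+\omega(\eta)$, with the time term handled by the same crude bounds, gives barriers pinned at $t=0$. The Perron solution is
\[
F=\sup\{w:\ w \text{ subsolution},\ \underline F\le w\le \overline F\}.
\]
Lemma \ref{lemma:stability}, applied to sups of subsolutions, shows that $F^*$ is a subsolution. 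The standard bump argument, adapted to the nonlocal setting via Proposition \ref{prop: visco comparison principle}, shows that $F_*$ is a supersolution: if $F_*$ failed to be one, we could build a larger subsolution locally. The barriers give $F^*(0,\cdot)\le F_0\le F_*(0,\cdot)$. Theorem \ref{thm:comparison principle} then gives $F^*\le F_*$, so $F$ is continuous and is a viscosity solution. The periodicity structure $F(t,\theta+2\pi)=F(t,\theta)+1$ is preserved because the class of admissible subsolutions is invariant under $w\mapsto w(\cdot+2\pi)-1$.

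\textbf{General case.} This is the main obstacle: comparison is unavailable between $F^*$ and $F_*$ when $F_0$ jumps, so continuity fails. We approximate instead. Let $\mu_n=\mu_0*\rho_n$ be mollified measures. Then $F_n:=F_{\mu_n}$ is continuous and still satisfies \eqref{hypothesis min}, since its density is at least $m$. The continuous case gives solutions $F_n$, which are uniformly bounded on $[0,T]\times[0,2\pi]$ by the barriers.

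Arrange the approximations monotonically, namely $F_{\mu_0}(\cdot-1/n)\le F_n\le F_{\mu_0}(\cdot+1/n)$ up to constants. Comparison then sandwiches the limits. Set $\overline G=\limsup^* F_n$ and $\underline G=\liminf_* F_n$. Lemma \ref{lemma:stability} makes $\overline G$ a subsolution and $\underline G$ a supersolution. By comparison with translated versions, $\overline G(t,\theta)\le \underline G(t,\theta+\varepsilon)$ for every $\varepsilon>0$. Hence $F:=\overline G$ satisfies $F_*=\underline G$ wherever the translation argument closes, and $F$ is a solution.

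The delicate point is the initial trace. Using the barrier estimates $F_n(t,\theta)\ge F_n(0,\theta)-t$ and $F_n(t,\theta)\le F_n(0,\theta)$, we get $\overline G(0,\theta)\le \limsup F_{\mu_n}\le F_0^*(\theta)$. Similarly $\underline G(0,\cdot)\ge (F_0)_*$, which is the required initial condition in the sense of Definition \ref{def:visco}.
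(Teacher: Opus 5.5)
Your argument is correct in its main lines, but for the continuous case it takes a genuinely different route from the paper.

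\textbf{How the two routes differ.} The paper never constructs solutions inside viscosity theory. It mollifies $\mu_0$ and invokes Theorem 1.1 of \cite{kiselev2022} to obtain global smooth solutions. That step needs smooth, strictly positive data, hence \eqref{hypothesis min}, and it needs the minimum principle so that these remain solutions of \eqref{eqPrimitivemin}. The paper then reads off $|\partial_t F^\varepsilon|\le 1$ from the equation and takes half-relaxed limits $F_1,F_2$. For continuous $F_0$, comparison gives $F_1\le F_2$. For general $F_0$, it uses exactly your translated comparison $F_1(t,\cdot)\le F_2(t,\cdot+\gamma)$ to get $(F_1)_*=F_2$.

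You instead produce the continuous-case solution by Perron's method between the barriers $F_0-t\le F\le F_0$. These barriers are legitimate because $\frac{1}{\pi}(\arctan(\cdot)+\frac{\pi}{2})\in(0,1)$. You then run the paper's second step with Perron solutions as the approximants.

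\textbf{What each route buys.} Your route is self-contained: only comparison and stability are used. It also never uses \eqref{hypothesis min}, so it actually yields existence for \eqref{eqPrimitivemin} for every $\mu_0\in\mesT$. This matters for \eqref{eqPrimitive} only under \eqref{hypothesis min}.

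The price is the nonlocal bump step, which you only gesture at. Take $U=\max(F,\phi+\delta)$ on a ball $B_r$ and its $2\pi$-translates, and a touching point in $B_{r/2}$. In the near part of the integral the $\delta$'s cancel. The far part $|z|>r/2$ picks up at most $2\delta\int_{\T\cap|z|>r/2}\frac{dz}{8\pi\sin^2(z/2)}$, and the strict inequality absorbs this for $\delta$ small. The paper's route is shorter given the black box, and it shows directly that the viscosity solution is a limit of classical solutions.

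\textbf{Points to tighten.}
\begin{itemize}
\item The ``sharpened'' barriers in the continuous case are unnecessary: $F_0-t\le F\le F_0$ already gives $F^*(0,\cdot)\le F_0\le F_*(0,\cdot)$.
\item For discontinuous $F_0$, the functions $F_0^*$ and $(F_0)_*-t$ have the wrong semicontinuity to be super- and subsolutions. You only use them as bounds, so this is harmless.
\item The approximating data must be $F_0*\rho_n$, not the cumulative distribution function of $\mu_0*\rho_n$ normalized at $0$. The two differ by a constant that need not vanish in the limit when $\mu_0$ has an atom at $0$, and then $\underline G(0,\cdot)\ge (F_0)_*$ fails.
\item The identification $(\overline G)_*=\underline G$ holds everywhere, not just ``wherever the translation argument closes''. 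Indeed $(\overline G)_*(t,\theta)\le\liminf_{\varepsilon\to0^+}\overline G(t,\theta-\varepsilon)\le\underline G(t,\theta)$.
\end{itemize}
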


\begin{proof}
$\bullet$ Assume that $F_0$ is continuous. We start by regularizing the initial data. Let $(\rho_\varepsilon)_{\varepsilon>0}$ be an approximation of the unity on $\T$ and define $\mu_0^\varepsilon:=\mu_0\ast \rho_\varepsilon$.
Fix $\varepsilon>0$. Since $\mu_0^\varepsilon$ is smooth and satisfies \eqref{hypothesis min} we can consider a smooth strong solution $(\mu_t^\varepsilon)_{t\ge 0}$ of \eqref{eq:randompolpde} with initial condition 
$\mu_0^\varepsilon$ using Theorem 1.1 of \cite{kiselev2022}. We also have that the cumulative distribution function of $\mu_t^\varepsilon$; $F^\varepsilon(t,.):=F_{\mu^\varepsilon(t,.)}$; is a strong smooth solution of \eqref{eqPrimitive} and satisfies \eqref{hypothesis min} by the monotone principle ($t\mapsto\min_{x\in\T}\mu^\varepsilon(t,x)$ is non-decreasing). In particular, for all $\varepsilon>0$, $F^\varepsilon$ is a viscosity solution of \eqref{eqPrimitivemin} that satisfies \eqref{hypothesis min}. 
\newline
We define $F_1(t_0,x_0):=\underset{\varepsilon\to 0,t\to t_0,x\to x_0}{\limsup}F^\varepsilon(t,x)$ and
$F_2(t_0,x_0):=\underset{\varepsilon\to 0,t\to t_0,x\to x_0}{\liminf}F^\varepsilon(t,x)$ which are finite since $(F^\varepsilon)_{\varepsilon>0}$ is uniformly bounded in $\varepsilon>0$ and time on compact space of the space variable since $F^\varepsilon(t,.)$ is the cumulative distribution of a probability measure on $\T$. 
\newline
By Lemma \ref{lemma:stability}, $F_1$ is a viscosity subsolution of \eqref{eqPrimitivemin} and $F_2$ is a viscosity supersolution of \eqref{eqPrimitivemin}. Moreover, they also both satisfy \eqref{hypothesis min}.
\newline
Now let us look at the initial condition. Since for all $\varepsilon>0$, $F^\varepsilon$ is a strong solution of \eqref{eqPrimitive}, we have that for all $t\ge 0,\, x\in\R$, $$\partial_t F^\varepsilon(t,x)+\cfrac{1}{\pi}\left(\arctan\left(\cfrac{A_0[F^\varepsilon(t,.)](x)}{\partial_x F^\varepsilon(t,x)}\right)+\cfrac{\pi}{2}\right)=0.$$
From this we deduce that for all $\varepsilon>0$ for all $t\ge 0,\,x\in \R$ $$\left|\partial_t F^\varepsilon(t,x)\right|\le 1.$$
The mean value theorem gives that for all $\varepsilon>0$, for all $t\ge 0,\, x\in\R$ $$\left|F^\varepsilon(t,x)-F_{\mu_0^\varepsilon}(x)\right|=\left|F^\varepsilon(t,x)-F^\varepsilon(0,x)\right|\le t.$$
\newline
Since $F_0$ is continuous, passing to the superior and inferior limit in the previous inequality gives that $x\in\R$ $F_1(0,x)= (F_0)^*(x)=F_0(x)$ and $F_2(0,x)=(F_0)_*(x)=F_0(x)$. As $F_1$ is a viscosity subsolution of \eqref{eqPrimitivemin} and $F_2$ is a viscosity supersolution of \eqref{eqPrimitivemin}, by the comparison principle of Theorem \ref{thm:comparison principle}, we obtain that for every $t\ge 0$ $F_1(t,.)\le F_2(t,.)$ and thus $F:=F_1=F_2$ is a continuous viscosity solution that satisfies $F(0,.)=F_0$.
\newline
\newline
$\bullet$ We first regularize $F_0$ by considering for all $\varepsilon>0$ $F_0^\varepsilon:=F_0\ast\rho_\varepsilon$ with $\rho_\varepsilon$ an approximation of the unity such that $\rho_\varepsilon$ is supported in $[-\varepsilon,\varepsilon]$. We notice that since $F_0$ satisfies \eqref{hypothesis min} then so is $F_0^\varepsilon$ for all $\varepsilon>0$. Using Theorem 1.1 of \cite{kiselev2022} we can consider $(F^\varepsilon(t,x))_{t\ge 0,x\in\R}$ a  strong smooth solution of \eqref{eqPrimitive} starting from $F^\varepsilon(0,.)$. The solution $F^\varepsilon$ satisfies \eqref{hypothesis min} by the monotone principle ($t\mapsto\min_{x\in\T}\partial_x F^\varepsilon(t,x)$ is non-decreasing). In particular, for all $\varepsilon>0$, $F^\varepsilon$ is a viscosity solution of \eqref{eqPrimitivemin} that satisfies \eqref{hypothesis min}. 
\newline
We define $F_1(t_0,x_0):=\underset{\varepsilon\to 0,t\to t_0,x\to x_0}{\limsup}F^\varepsilon(t,x)$ and
$F_2(t_0,x_0):=\underset{\varepsilon\to 0,t\to t_0,x\to x_0}{\liminf}F^\varepsilon(t,x)$.
\newline
We shall prove that $F_1$ is a viscosity solution of $\eqref{eqPrimitivemin}$ with initial condition $F_0$.
By definition of being a viscosity solution of $\eqref{eqPrimitivemin}$ we have to prove that $F_1$ is a viscosity subsolution of $\eqref{eqPrimitivemin}$, $(F_1)_*$ is a viscosity supersolution of $\eqref{eqPrimitivemin}$ and $F_1(0,.)\le F_0$ and $(F_1)_*(0,.)\ge (F_0)_*$.
\newline
By Lemma \ref{lemma:stability}, $F_1$ is a viscosity subsolution of \eqref{eqPrimitivemin} and $F_2$ is a viscosity supersolution of \eqref{eqPrimitivemin}. Moreover, they also both satisfy \eqref{hypothesis min}.
\newline
Now let us look at the initial condition. Since for all $\varepsilon>0$, $F^\varepsilon$ is a strong solution of \eqref{eqPrimitive}, we have that for all $t\ge 0,\, x\in\R$, $$\partial_t F^\varepsilon(t,x)+\cfrac{1}{\pi}\left(\arctan\left(\cfrac{A_0[F^\varepsilon(t,.)](x)}{\partial_x F^\varepsilon(t,x)}\right)+\cfrac{\pi}{2}\right)=0.$$
From this we deduce that for all $\varepsilon>0$ for all $t\ge 0,\,x\in \R$ $$\left|\partial_t F^\varepsilon(t,x)\right|\le 1.$$
The mean value theorem gives that for all $\varepsilon>0$ for all $t\ge 0,\,x\in \R$
\begin{equation}
\label{eq:inegalitefonctionrepartitionlim}
-t +F^\varepsilon(0,x)\le F^\varepsilon(t,x)\le t+F^\varepsilon(0,x).
\end{equation}
We start by noticing that for $x\in\R$ $$F_1(0,x)= \underset{t\to 0,x_0\to x,\varepsilon\to 0}{\limsup}F^\varepsilon(t,x_0).$$
So using \eqref{eq:inegalitefonctionrepartitionlim} yields that for $x\in\R$
$$F_1(0,x)\le\underset{t\to 0,x_0\to x,\varepsilon\to 0}{\limsup}[t+F^\varepsilon(0,x_0)]=\underset{x_0\to x,\varepsilon\to 0}{\limsup}F^\varepsilon_0(x_0).$$
Since $\rho_\varepsilon$ is supported on $[-\varepsilon,\varepsilon]$ we have $F^\varepsilon_0(x)\le \sup_{y\in[x-\varepsilon,x+\varepsilon]}F_0(y)$ and so $$F_1(0,.)\le (F_0)^*.$$
Doing the exact same argument with the liminf, we obtain $(F_0)_*\le F_2(0,.)$. 
\newline
To conclude about the initial condition it suffices to show that $(F_1)_*(0,.)=F_2(0,.)$. Since $F_1\ge F_2$ we directly have that $(F_1)_*(0,.)\ge F_2(0,.)$. We prove the reverse. Indeed using \eqref{eq:inegalitefonctionrepartitionlim} we get $$F_1(t,x)\le t+(F_0)^*(x).$$ This implies $$(F_1)_*(0,x)\le ((F_0)^*)_*(x)=(F_0)_*(x)$$ since $F_0$ is non-decreasing. So $(F_1)_*(0,x)\le (F_0)_*(x)\le F_2(0,x)$. 
\newline
In summary at time $0$ we proved: 
\begin{equation}
\label{eq:initialcondition}
\left\{
\begin{split}
F_1(0,.)&\le F_0\\
(F_1)_*(0,.)&\ge (F_0)_*\\
(F_1)_*(0,.)&=F_2(0,.).
\end{split}
\right.
\end{equation}
\newline 
It remains to prove that $(F_1)_*$ is a viscosity supersolution of \eqref{eqPrimitivemin}. To show this we shall prove that $(F_1)_*= F_2$. By definition, since $F_1\ge F_2$ we have $(F_1)_*\ge (F_2)_*=F_2$.
\newline
Using the initial condition \eqref{eq:initialcondition} for all $\gamma>0$, we have $$F_1(0,.)\le F_0(.)\le (F_0)_*(.+\gamma)\le (F_2)_*(0,.+\gamma).$$
As $F_1$ is a viscosity subsolution of \eqref{eqPrimitivemin} and $F_2(.,.+\gamma)$ is a viscosity supersolution of \eqref{eqPrimitivemin} by the comparison principle of Theorem \ref{thm:comparison principle}, we obtain that for every $\gamma>0$, for every $t\ge 0$ $F_1(t,.)\le F_2(t,.+\gamma)$. Let $\gamma$ goes to 0 to obtain that $(F_1)_*\le (F_2)_*=F_2$. 
 
\end{proof}

\subsubsection{Continuity in the initial condition}
\begin{proposition}
\label{prop:continuity}
Fix $m>0$. Let $(\mu_n)_{n\in\N}\in\mesT^\N$ be a family of probabilities measures on $\T$ such that for all $n\ge 0$ $F_{\mu_n}$, the cumulative distribution function of $\mu_n$, satisfies \eqref{hypothesis min}. Assume that $(\mu_n)_{n\in\N}\in\mesT^\N$ converges in law towards $\mu\in\mesT$ and that $\mu$ is absolutely continuous with respect to Lebesgue measure.
Then $F_\mu$ satisfies \eqref{hypothesis min}.
\newline
Moreover, we now denote $(F_n(t,x))_{t\ge 0,x\in\R}$ (resp. $(F_\infty(t,x))_{t\ge 0,x\in\R}$) the unique viscosity solution of \eqref{eqPrimitivemin} with initial condition $F_{\mu_n}$ (resp. $F_\mu$). 
Then we have the following convergence $$\sup_{t\ge 0}||F_n(t,.)-F_\infty(t,.)||_{L^\infty(\R)}\underset{n\to+\infty}{\longrightarrow} 0.$$
\end{proposition}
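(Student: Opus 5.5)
The plan has three steps: pass $(H_m)$ to the limit, upgrade convergence in law to uniform convergence of the cumulative distribution functions, and then conclude with the comparison principle and the translation and vertical-shift invariance of \eqref{eqPrimitivemin}.

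\textbf{Step 1: $F_\mu$ satisfies $(H_m)$.} Since $\mu$ is absolutely continuous, $F_\mu$ is continuous, and $\mu$ has no atom at $0$. Hence convergence in law gives $F_{\mu_n}(x)\to F_\mu(x)$ for every $x\in\R$: every point is a continuity point, and the normalization at the reference angle $0$ passes to the limit. Passing to the limit in $F_{\mu_n}(\theta+h)-F_{\mu_n}(\theta)\ge mh$ then yields $(H_m)$ for $F_\mu$. Consequently Proposition \ref{prop:existence} and Proposition \ref{prop:uniqueness} provide the solutions $F_n$ and $F_\infty$. The same propositions, through the first bullet of Proposition \ref{prop:existence}, give that $F_\infty$ is continuous.

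\textbf{Step 2: uniform convergence at time $0$.} All the functions are non-decreasing and satisfy $F(x+2\pi)=F(x)+1$, and $F_\mu$ is continuous, hence uniformly continuous on $[0,2\pi]$. A Dini/Pólya-type argument then upgrades pointwise convergence to uniform convergence: $\eta_n:=\|F_{\mu_n}-F_\mu\|_{L^\infty(\R)}\to0$.

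\textbf{Step 3: comparison.} The main idea is that \eqref{eqPrimitivemin} is invariant under adding constants, so $F_\infty\pm\eta_n$ are respectively a solution with initial datum $F_\mu\pm\eta_n$. We would like to apply Theorem \ref{thm:comparison principle} to the pairs $(F_n,F_\infty+\eta_n)$ and $(F_\infty-\eta_n,(F_n)_*)$.

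The main obstacle is the initial condition in the sense of Definition \ref{def:visco}. We only know $F_n^*(0,\cdot)\le F_{\mu_n}^*$ and $(F_n)_*(0,\cdot)\ge (F_{\mu_n})_*$, and $F_{\mu_n}$ may be discontinuous. To handle this, use the lower bound $(F_{\mu_n})_*(x)\ge F_{\mu_n}(x-\gamma)$ together with upper semicontinuity, exactly as in the proof of Proposition \ref{prop:uniqueness}. Then compare $F_n$ with $F_\infty(t,x+\gamma)+\eta_n$, and $F_\infty(t,x-\gamma)-\eta_n$ with $(F_n)_*$. Both shifted functions are still solutions by the translation invariance remark.

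For the first pair, we need $F_n(0,x)\le F_{\mu_n}^*(x)\le F_\mu(x)+\eta_n\le F_\infty(0,x+\gamma)+\eta_n$. The last inequality holds because $F_\infty$ is continuous with $F_\infty(0,\cdot)=F_\mu$ non-decreasing. The second pair is handled symmetrically. The comparison principle yields, for all $t\ge0$ and $x\in\R$,
$$F_\infty(t,x-\gamma)-\eta_n\le F_n(t,x)\le F_\infty(t,x+\gamma)+\eta_n.$$

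\textbf{Removing $\gamma$.} To let $\gamma\to0$ uniformly in $t$, we need uniform-in-time equicontinuity of $F_\infty$ in space. For this, compare $F_\infty(t,\cdot+h)$ with $F_\infty(t,\cdot)+\omega(h)$, where $\omega$ is the modulus of continuity of $F_\mu$. Both are solutions, and at $t=0$ they satisfy $F_\mu(x+h)\le F_\mu(x)+\omega(h)$. Theorem \ref{thm:comparison principle} therefore gives $F_\infty(t,x+h)-F_\infty(t,x)\le\omega(h)$ for all $t$. Hence
$$\sup_{t\ge0}\|F_n(t,\cdot)-F_\infty(t,\cdot)\|_{L^\infty(\R)}\le \eta_n+\omega(\gamma)$$
for every $\gamma>0$. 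Letting $\gamma\to0$ and then $n\to\infty$ concludes the proof.
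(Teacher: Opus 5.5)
Your proof is correct and follows the paper's route: pointwise convergence of $F_{\mu_n}$ from the absolute continuity of $\mu$, a Dini/Pólya upgrade to uniform convergence, and then the comparison principle applied to $F_n$ and $F_\infty\pm\eta_n$. The $\gamma$-shift and the uniform-in-time equicontinuity step are valid but unnecessary: $F_\mu-\eta_n$ is continuous and lies below $F_{\mu_n}$, hence also below $(F_{\mu_n})_*\le (F_n)_*(0,\cdot)$, and this is how the paper handles the initial condition directly, with no shift.
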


\begin{proof}
Since $\mu$ is absolutely continuous with respect to Lebesgue measure, for all $x\in\R$ $$F_{\mu_n}(x)\underset{n\to+\infty}{\longrightarrow}F_\mu(x).$$ It directly implies that $F_\mu$ satisfies \eqref{hypothesis min}.
Moreover, by the Dini convergence theorem the convergence of $F_{\mu_n}$ towards $F_{\mu}$ is also uniform on $[0,2\pi]$ and so on $\R$ (since for the cumulative distribution of a measure $\nu\in\mesT$ we have $F_\mu(.+2\pi)=F_\mu(.)+1$).
\newline
We recall by Proposition \ref{prop:existence} that $F_\infty$ is continuous and $F_\infty(0,.)=F_\mu(.)$. 
\newline
Fix $\varepsilon>0$ and consider an integer $N$ such that for all $n\ge N$ one has $$||F_{\mu_n}(.)-F_\mu(.)||_{L^\infty(\R)}\le \varepsilon.$$
To complete the proof we shall prove for $n\ge N$ we have $$\sup_{t\ge 0}||F_n(t,.)-F_\infty(t,.)||_{L^\infty(\R)}\le \varepsilon.$$
For $n\ge N$, we have $$F_n(0,.)\le F_{\mu_n}(.)\le F_\mu(.)+\varepsilon=F_\infty(0,.)+\varepsilon.$$
By the comparison principle we obtain that for every $n\ge N$ and $t\ge 0$ $$F_n(t,.)\le F_\infty(t,.)+\varepsilon.$$
For the other inequality, for $n\ge N$, we have $$-\varepsilon+F_\mu(.)\le F_{\mu_n}(.)$$ Passing to the infimum and since $F_\mu$ is continuous one has $$-\varepsilon+F_\infty(0,.)=-\varepsilon+F_\mu(.)\le (F_{\mu_n})_*(.)\le (F_n)_*(0,.).$$
Again by the comparison principle we get that for all $n\ge N$, for all $t\ge 0$ $$-\varepsilon+F_\infty(t,.)\le (F_n)_*(t,.)\le F_n(t,.),$$ which proves the result.
\end{proof}

\subsection{Existence, uniqueness, continuity in the initial condition of viscosity solutions of the original equation}
Using Remark \ref{remark fonction test} and Remark \ref{remark: solution en m}, we have that if $F$ satisfies $\eqref{hypothesis min}$ for a $m>0$ then $F$ is a viscosity solution of \eqref{eqPrimitive} (in the sense of Definition \ref{def:viscoeq}) if and only if $F$ is a viscosity solution of $\eqref{eqPrimitivemin}$.
As a corollary of Proposition \ref{prop:existence} and of the minimum principle we get the following result for the existence of a viscosity solution of \eqref{eqPrimitive}. 
\begin{theorem}
Let $\mu\in\mesT$ such that $F_\mu$ satisfies $\eqref{hypothesis min}$ for a $m>0$. Then there exists a viscosity solution of \eqref{eqPrimitive} with initial condition $F_\mu$. 
\end{theorem}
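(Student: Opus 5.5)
The argument reduces the statement to the truncated equation \eqref{eqPrimitivemin} and then transfers the result back to \eqref{eqPrimitive}. First, apply Proposition \ref{prop:existence} (second bullet, since $F_\mu$ need not be continuous) with the given $m>0$. This yields a viscosity solution $F$ of \eqref{eqPrimitivemin} in the sense of Definition \ref{def:visco} with initial condition $F_\mu$, meaning $F^*(0,\cdot)\le F_\mu^*$ and $F_*(0,\cdot)\ge (F_\mu)_*$.

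Second, check that $F$ is bounded in the sense of $\mathcal F_{t,2\pi}$. This holds because $F$ is built as an upper limit of cumulative distribution functions $F^\varepsilon(t,\cdot)$ of probability measures, together with the bound $|F^\varepsilon(t,x)-F^\varepsilon(0,x)|\le t$ and the periodicity $F(t,x+2\pi)=F(t,x)+1$. With boundedness in hand, the monotone principle, Proposition \ref{propmonotoneprinciple}, applies to $F$. It shows that $F(t,\cdot)$ satisfies \eqref{hypothesis min} for every $t\ge0$. The construction in Proposition \ref{prop:existence} already gives this, since both $F_1$ and $F_2$ inherit \eqref{hypothesis min}. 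We also need the lower envelope $F_*$, which is the supersolution in the definition, to satisfy \eqref{hypothesis min}. I expect this from $F_*=F_2$ in that proof, or by taking $\liminf$ in the inequality $F(t,\theta+h)-F(t,\theta)\ge mh$.

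Third, translate the notion of solution using Remark \ref{remark fonction test}. Let $\phi\in C^{1,1}_{t,2\pi}$ touch $F$ from above at a strict maximum $(t_0,\theta_0)$. The $(H_m)$ property forces $\partial_\theta\phi(t_0,\theta_0)\ge m$. Hence $\partial_\theta\phi(t_0,\theta_0)\neq0$ and $\max((\partial_\theta\phi)_+,m)=(\partial_\theta\phi)_+$, so the subsolution inequality \eqref{subsol} for \eqref{eqPrimitivemin} coincides with \eqref{subsoleq}.

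Fourth, treat the supersolution side; this is the step I expect to be the main obstacle. Let $\phi$ touch $F_*$ from below at a strict minimum. The remark's argument is written for maxima, and at a minimum the one-sided inequality from $(H_m)$ only controls the derivative from the other side. I would use $h\le0$ in the comparison
\[
F_*(t_0,\theta_0+h)-\phi(t_0,\theta_0+h)\ge F_*(t_0,\theta_0)-\phi(t_0,\theta_0).
\]
Combined with $F_*(t_0,\theta_0+h)\le F_*(t_0,\theta_0)+mh$ for $h<0$, this gives
\[
\phi(t_0,\theta_0+h)-\phi(t_0,\theta_0)\le mh,
\]
and dividing by $h<0$ and letting $h\to0^-$ yields $\partial_\theta\phi(t_0,\theta_0)\ge m$. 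The truncation is therefore again inactive, and \eqref{supersol} coincides with \eqref{supersoleq}. Test points where $\partial_\theta\phi=0$ are excluded in Definition \ref{def:viscoeq}, so they cause no issue.

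The initial condition requirement is identical in Definitions \ref{def:viscoeq} and \ref{def:visco}. Therefore $F$ is a viscosity solution of \eqref{eqPrimitive} with initial condition $F_\mu$, and it satisfies \eqref{hypothesis min} for all times, as claimed in Theorem \ref{thm:existenceintro}.
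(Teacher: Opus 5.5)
Your proposal is correct and follows the paper's route. There the theorem is a corollary of Proposition \ref{prop:existence}, of the propagation of \eqref{hypothesis min} (via the monotone principle, or directly from $F_1,F_2$ in the construction), and of Remark \ref{remark fonction test}; your $h<0$ argument for test functions touching $F_*$ from below, together with the $\liminf$ argument that $F_*$ inherits \eqref{hypothesis min}, correctly fills in the supersolution half, which the remark only writes out for maxima. One small caveat: $|F^\varepsilon(t,x)-F^\varepsilon(0,x)|\le t$ gives boundedness on $[0,T]\times[0,2\pi]$ for each $T$ but not uniformly in $t\ge 0$ (for the uniform measure the solution is $F(t,\theta)=\frac{\theta}{2\pi}-\frac{t}{2}$), which suffices because the doubling-of-variables argument in Theorem \ref{thm:comparison principle} only works on $[0,T]$, and your direct route through $F_1,F_2$ avoids the monotone principle anyway.
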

As a corollary of Proposition \ref{prop:uniqueness} we get the following result for the uniqueness of a viscosity solution of \eqref{eqPrimitivemin}. 
\begin{theorem}
Let $\mu\in\mesT$ such that $F_\mu$ satisfies $\eqref{hypothesis min}$ for a $m>0$. Then there exists at most one viscosity solution in $(H_{m'})$ for $0<m'\le m$ of \eqref{eqPrimitive} with initial condition $F_\mu$.
\end{theorem}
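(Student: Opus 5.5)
The plan is to reduce the statement to the uniqueness result for the truncated equation, Proposition \ref{prop:uniqueness}, by way of Remark \ref{remark fonction test} and Remark \ref{remark: solution en m}.

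Let $F$ and $G$ be two viscosity solutions of \eqref{eqPrimitive}, in the sense of Definition \ref{def:viscoeq}, with initial condition $F_\mu$. Assume $F$ satisfies $(H_{m_1})$ and $G$ satisfies $(H_{m_2})$ for some $0<m_1,m_2\le m$. Set $m'':=\min(m_1,m_2)>0$; then both $F$ and $G$ satisfy $(H_{m''})$.

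The first step is to show that each of $F$, $G$ is a viscosity solution of $(E_{m''})$. Take a test function $\phi\in C^{1,1}_{t,2\pi}$ touching $F$ from above at a strict maximum $(t_0,\theta_0)$. The argument of Remark \ref{remark fonction test} gives $\partial_\theta\phi(t_0,\theta_0)\ge m''>0$. So the condition $\partial_\theta\phi\neq0$ required in Definition \ref{def:viscoeq} holds automatically, and $\max((\partial_\theta\phi)_+,m'')=\partial_\theta\phi$. Hence the subsolution inequalities of \eqref{eqPrimitive} and $(E_{m''})$ coincide.

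For the supersolution part, one needs $F_*$ to satisfy $(H_{m''})$ as well. This follows by taking liminf in $F(t,\theta+h)-F(t,\theta)\ge m''h$: along sequences $(s,\eta)\to(t,\theta)$ one has $F(s,\eta+h)\ge F(s,\eta)+m''h$, which yields $F_*(t,\theta+h)\ge F_*(t,\theta)+m''h$. At a strict minimum of $F_*-\phi$ we get $\phi(t_0,\theta_0+h)-\phi(t_0,\theta_0)\ge F_*(t_0,\theta_0+h)-F_*(t_0,\theta_0)\ge m''h$ for $h\ge0$, hence $\partial_\theta\phi(t_0,\theta_0)\ge m''$. The same identification of the two inequalities then holds.

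The second step checks the hypotheses of Proposition \ref{prop:uniqueness}. The initial datum $F_\mu$ is bounded in the sense of $\mathcal F_{2\pi}$, non-decreasing and right continuous. The initial-condition convention is the same in both definitions. Applying Proposition \ref{prop:uniqueness} with $m''$ in place of $m$ gives $F=G$.

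The one point of care is that the comparison principle behind Proposition \ref{prop:uniqueness}, Theorem \ref{thm:comparison principle}, is stated for bounded sub/supersolutions. Boundedness here is in the $\mathcal F_{t,2\pi}$ sense. I would note that it holds for the solutions under consideration, since they are locally bounded and their periodicity shift is fixed by the initial datum. This is the step I expect to need an explicit remark; everything else is a direct reduction.
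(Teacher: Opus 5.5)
Your reduction is the same as the paper's argument. Under $(H_{m''})$, every test function at a contact point has $\partial_\theta\phi\ge m''$. So the notions of viscosity solution for \eqref{eqPrimitive} and for $(E_{m''})$ coincide (Remarks \ref{remark fonction test} and \ref{remark: solution en m}), and Proposition \ref{prop:uniqueness} then gives $F=G$. Your choice $m''=\min(m_1,m_2)$ is a correct way to handle two possibly different constants.

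There is one sign slip in your supersolution step, a step the paper leaves implicit since its remark only treats maxima. At a minimum of $F_*-\phi$ one has $\phi(t_0,\theta)-\phi(t_0,\theta_0)\le F_*(t_0,\theta)-F_*(t_0,\theta_0)$. So the inequality you wrote with $\theta=\theta_0+h$ goes the wrong way and only bounds the increments of $\phi$ from above. Use left increments instead: for $h\ge 0$,
\[
\phi(t_0,\theta_0)-\phi(t_0,\theta_0-h)\ge F_*(t_0,\theta_0)-F_*(t_0,\theta_0-h)\ge m''h .
\]
This gives $\partial_\theta\phi(t_0,\theta_0)\ge m''$, and the rest of your argument goes through unchanged.

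Your boundedness remark is adequate. The comparison proof runs on $[0,T]$, where local boundedness together with the $\mathcal F_{t,2\pi}$ structure already provides the sublinear growth that is used.
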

\begin{remark}
In the previous theorem we need the hypothesis of strict monotony of solutions to have the uniqueness of viscosity solutions of \eqref{eqPrimitive}. Indeed, we did not directly obtain a comparison principle for \eqref{eqPrimitive} because of the singularity in the PDE.
\end{remark}
Finally, we can reformulate Proposition \eqref{prop:continuity} for viscosity solutions of \eqref{eqPrimitivemin}.
\begin{theorem}
Fix $m>0$. Let $(\mu_n)_{n\in\N}\in\mesT^\N$ be a family of probabilities measures on $\T$ such that for all $n\ge 0$ $F_{\mu_n}$, the cumulative distribution function of $\mu_n$, satisfies \eqref{hypothesis min}. Assume that $(\mu_n)_{n\in\N}\in\mesT^\N$ converges in law towards $\mu\in\mesT$ and that $\mu$ is absolutely continuous with respect to Lebesgue measure.
Let $(F_n(t,x))_{t\ge 0,x\in\R}$ (resp. $(F_\infty(t,x))_{t\ge 0,x\in\R}$) be a viscosity solution of \eqref{eqPrimitive} with initial condition $F_{\mu_n}$ (resp. $F_\mu$). 
Then we have the following convergence $$\sup_{t\ge 0}||F_n(t,.)-F_\infty(t,.)||_{L^\infty(\R)}\underset{n\to+\infty}{\longrightarrow} 0.$$
\end{theorem}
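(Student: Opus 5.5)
The plan is to reduce the statement to Proposition \ref{prop:continuity}, using the equivalence between \eqref{eqPrimitive} and \eqref{eqPrimitivemin} for functions satisfying a strict monotonicity hypothesis (Remark \ref{remark fonction test} and Remark \ref{remark: solution en m}). The one point needing care is that the statement only says $F_n$ and $F_\infty$ are viscosity solutions of \eqref{eqPrimitive}. It does not say they satisfy some $(H_{m'})$, and without that the truncated equation gives no information. So I will read the statement as referring to the solutions provided by the existence and uniqueness theorems above. These are solutions satisfying \eqref{hypothesis min} for all time, and by the uniqueness theorem they are the only solutions within the class $(H_{m'})$, $0<m'\le m$. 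I will make this interpretation explicit at the start.

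\textbf{Step 1 (the limit is in the class).} Since $\mu$ is absolutely continuous, $F_{\mu_n}\to F_\mu$ pointwise, exactly as in the proof of Proposition \ref{prop:continuity}. The inequalities $F_{\mu_n}(\theta+h)-F_{\mu_n}(\theta)\ge mh$ therefore pass to the limit, so $F_\mu$ satisfies \eqref{hypothesis min}. As a result, all the initial data $F_{\mu_n}$ and $F_\mu$ satisfy \eqref{hypothesis min} with the same $m$.

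\textbf{Step 2 (identify with truncated solutions).} By Proposition \ref{prop:existence}, there exist viscosity solutions $G_n$ and $G_\infty$ of \eqref{eqPrimitivemin} with initial data $F_{\mu_n}$ and $F_\mu$. By Proposition \ref{propmonotoneprinciple}, these solutions satisfy \eqref{hypothesis min} for all $t$. By Remark \ref{remark fonction test}, they are then viscosity solutions of \eqref{eqPrimitive}.

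Now take the given $F_n$, assumed to satisfy $(H_{m'})$ for some $0<m'\le m$. By Remark \ref{remark fonction test}, $F_n$ is a solution of $(E_{m'})$. By Remark \ref{remark: solution en m}, $G_n$ is also a solution of $(E_{m'})$. Proposition \ref{prop:uniqueness} applied with $m'$ then gives $F_n=G_n$, and the same argument gives $F_\infty=G_\infty$.

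\textbf{Step 3 (conclude).} Apply Proposition \ref{prop:continuity} to the $G_n$ and $G_\infty$. This yields $\sup_{t\ge0}\|F_n(t,\cdot)-F_\infty(t,\cdot)\|_{L^\infty(\R)}\to 0$.

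The main obstacle is Step 2, the identification. Viscosity solutions of the untruncated equation \eqref{eqPrimitive} carry no comparison principle of their own, so the proof only works inside the class of strictly monotone solutions. Beyond stating that hypothesis clearly, I need to check one detail: the test functions in Definition \ref{def:viscoeq} must satisfy $\partial_\theta\phi\neq0$. This is automatic here, because Remark \ref{remark fonction test} gives $\partial_\theta\phi\ge m'>0$ at every touching point, so the two definitions coincide.
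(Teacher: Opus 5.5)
Your proposal is correct and takes the same route as the paper. The paper also obtains this theorem by reformulating Proposition \ref{prop:continuity}, using the equivalence between viscosity solutions of \eqref{eqPrimitive} and of \eqref{eqPrimitivemin} for functions satisfying \eqref{hypothesis min} (Remarks \ref{remark fonction test} and \ref{remark: solution en m}).

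Your explicit restriction to solutions lying in some class $(H_{m'})$ is the assumption the paper makes implicitly, since it has no comparison principle for \eqref{eqPrimitive} itself. Routing the argument through $F_n=G_n$ via Proposition \ref{prop:uniqueness} also correctly handles the case where $m'$ depends on $n$, which the paper does not address.
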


\section{System of particles}
\label{section:systemofparticles}

\subsection{Reminders and notation for trigonometric polynomials}
We considerer the space of real trigonometric polynomials with $2n$ distinct roots in $(-\pi,\pi]$ $$\R_{2n,per}=\left\{ p\, | \, \exists(a_i,b_i)_{i=1}^n\in \R^{2n}, p(x)=\sum_{j=1}^na_j\cos(jx)+b_j\sin(jx),\, p\text{ has  }2n\,\text{distinct roots in }(-\pi,\pi] \right\} .$$
We notice that if $p\in \R_{2n,per}$ then all the derivatives of $p$ are also in $\R_{2n,per}$ by the Rolle theorem.
For $p\in \R_{2n,per}$, let $(x_j)_{j=1}^{2n}$ be its $2n$ distinct roots. 
We recall that for $p\in \R_{2n,per}$, we can find a constant $c\in\R$ such that $$p(x)=c\prod_{j=1}^{2n}\sin\left(\cfrac{x-x_j}{2}\right).$$ 
\newline
This gives the following identity for $p\in\R_{2n,per}$, \begin{equation}
\label{Formula p'/p}
\cfrac{p'(x)}{p(x)}=\cfrac{1}{2}\sum_{i=1}^{2n}\cotan\left(\cfrac{x-x_j}{2}\right).
\end{equation}
Let us notice that this identity is the counterpart of the decomposition $$\cfrac{P'(X)}{P(X)}=\sum_{i=1}^{n}\cfrac{1}{X-x_j}, $$ for $P\in \C_n[X]$ such that $P(X)=\prod_{i=1}^n (X-x_j)$.

\subsection{Comparison principles on the particles}
\label{sect:compaparticles}
We first state a comparison principle for the roots of real polynomials before stating it for trigonometric polynomials.
\begin{proposition}[Discrete comparison principle]
\label{prop:comp principle}
Let $x_1<x_2<...<x_N$, $y_1<y_2<...<y_N$ be two families of $N$ real numbers and consider $P_N(X)=\prod_{i=1}^N(X-x_i)$ and $Q_N(X)=\prod_{i=1}^N(X-y_i)$. Let $x_1<x_1'<x_2<x_2'<...<x_{N-1}'$ the $N-1$ real roots of $P'$ and $y_1<y_1'<y_2<y_2'<...<y_{N-1}'$ the $N-1$ real roots of $Q'$. 
Assume that for all $1\le i\le N$, $x_i\le y_i$, then for all $1\le i\le N-1$, $x_i'\le y_i'$
\end{proposition}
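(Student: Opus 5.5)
The plan is to characterise each critical point $x_i'$ as the unique zero of $P'/P$ in its interlacing interval, and to use that this logarithmic derivative is monotone with respect to the roots. Rolle's theorem together with the count of $N-1$ real roots of $P'$ already gives the ordering $x_i<x_i'<x_{i+1}$. On the interval $(x_i,x_{i+1})$ we consider
$$f(z)=\frac{P'(z)}{P(z)}=\sum_{j=1}^N \frac{1}{z-x_j}.$$
It is strictly decreasing, since $f'(z)=-\sum_j (z-x_j)^{-2}<0$. It tends to $+\infty$ at $x_i^+$ and to $-\infty$ at $x_{i+1}^-$. Hence $x_i'$ is its unique zero there, with $f>0$ on $(x_i,x_i')$ and $f<0$ on $(x_i',x_{i+1})$. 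The same description holds for $g=Q'/Q$ and $y_i'$ on $(y_i,y_{i+1})$.

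Next we argue by contradiction. Suppose $y_i'<x_i'$ for some $i$. Then $z:=y_i'$ satisfies $x_i\le y_i<z<x_i'<x_{i+1}\le y_{i+1}$, so $z$ lies in both intervals. For each $j$, the map $a\mapsto 1/(z-a)$ is increasing on each side of $z$. We also have $x_j\le y_j$, and $x_j,y_j$ lie on the same side of $z$: for $j\le i$ both are $<z$, and for $j\ge i+1$ both are $>z$. Therefore $1/(z-x_j)\le 1/(z-y_j)$ for every $j$, and summing gives
$$f(z)\le g(z)=0.$$
But $z\in(x_i,x_i')$ forces $f(z)>0$, which is a contradiction. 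Hence $x_i'\le y_i'$.

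The only delicate step is the same-side claim, which is what makes the termwise monotonicity legitimate. It relies on $x_j\le y_j<z$ for $j\le i$ and on $z<x_{i+1}\le x_j\le y_j$ for $j\ge i+1$. Both follow from the ordering of the two families together with $z\in(y_i,x_{i+1})$, which comes from $y_i<y_i'<x_i'<x_{i+1}$. No degenerate cases arise, because all roots are simple and $z$ is never equal to a root.

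For the trigonometric analogue on $\T$, the same argument would use $\frac{1}{2}\sum_j\cotan\bigl(\frac{z-x_j}{2}\bigr)$, which is decreasing in $z$ and increasing in each $x_j$ on the arc between consecutive roots. There one must check that a lift of the roots exists so that all $x_j,y_j$ lie within a window of length $2\pi$ around $z$.
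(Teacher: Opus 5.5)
Your proof is correct, but it finishes the argument by a different mechanism than the paper.

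The paper does not evaluate the two logarithmic derivatives at a common point. It subtracts $0=\sum_i 1/(x_k'-x_i)$ and $0=\sum_i 1/(y_k'-y_i)$ and solves exactly for the displacement:
\[
y_k'-x_k'=\frac{\sum_i w_i\,(y_i-x_i)}{\sum_i w_i},\qquad w_i=\frac{1}{(x_k'-x_i)(y_k'-y_i)}>0.
\]
Positivity of $w_i$ comes from the interlacing of each family separately: for $i\le k$ both factors are positive, and for $i\ge k+1$ both are negative.

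You instead fix the evaluation point $z=y_i'$. You compare the two sums termwise, using that $a\mapsto 1/(z-a)$ is increasing on each side of $z$. You then conclude by contradiction with the strict decrease of $P'/P$ on $(x_i,x_{i+1})$. Your same-side check is right, but note that it genuinely needs both the hypothesis $x_j\le y_j$ and the contradiction assumption $y_i'<x_i'$. The paper's weights, by contrast, are positive with no hypothesis at all.

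What the identity buys is a quantitative statement directly. Since $y_k'-x_k'$ is a convex combination of the $y_i-x_i$, one gets $\min_i(y_i-x_i)\le y_k'-x_k'\le\max_i(y_i-x_i)$ for any two ordered families. With your argument, this two-sided, sup-norm nonexpansive version needs an extra translation step.

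What your argument buys is that it is a pure monotonicity argument at a single point, the discrete counterpart of the ellipticity used in the viscosity comparison. It also carries over almost verbatim to the trigonometric case with $\frac{1}{2}\sum_j\cotan\bigl(\frac{z-x_j}{2}\bigr)$. There the lift you ask about is simply the indices $k+1,\dots,k+2N$: under your contradiction assumption, all of $x_{k+1},\dots,x_{k+2N}$ and $y_{k+1},\dots,y_{k+2N}$ lie in $(z,z+2\pi)$. On that window $a\mapsto\cotan\bigl(\frac{z-a}{2}\bigr)$ is increasing. The paper's periodic proof instead uses a pigeonhole-type step, namely the existence of one index with $b_i\ge a_i$.
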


\begin{proof}
Let $1\le k\le N-1$. 
As for the Gauss-Lucas theorem we start from the expression $$\cfrac{P'(X)}{P(X)}=\sum_{i=1}^N\cfrac{1}{X-x_i}$$ in $x_k'$ and the same thing for $Q$. 
We get the following identities: 
\begin{equation}
\left\{ 
\begin{split}
0&=\sum_{i=1}^{N}\cfrac{1}{x_k'-x_i}\\
0&=\sum_{i=1}^{N}\cfrac{1}{y_k'-y_i}
\end{split}
\label{system1}
\right.
\end{equation}
Subtracting the two equations of \eqref{system1}: \begin{equation}
\label{exprracine}
0=\sum_{i=1}^{N}\cfrac{1}{x_k'-x_i}-\cfrac{1}{y_k'-y_i}=\sum_{i=1}^{N}\cfrac{y'_k-x_k'+x_i-y_i}{(x_k'-x_i)(y_k'-y_i)}.
\end{equation}
We obtain the following identity: $$y'_k-x'_k=\cfrac{\dis\sum_{i=1}^{N}\cfrac{y_i-x_i}{(x_k'-x_i)(y_k'-y_i)}}{\dis\sum_{i=1}^{N}\cfrac{1}{(x_k'-x_i)(y_k'-y_i)}}.$$
Let us remark that for all $1\le k\le N-1$ and for all $1\le i\le N$, $(x_k'-x_i)(y_k'-y_i)\ge 0$ because the two terms of the product have the same sign. Thanks to the hypothesis and the equation \eqref{exprracine} we get that $y_k'\ge x_k'$.
\end{proof}
Even if there is no order on the circle we can state a comparison principle for the roots of trigonometric polynomials.
\newline
Given $N$ points $(x_k)_{0\le k\le N-1}$ in an interval of length $2\pi$ we periodize these points by defining $(x_k)_{k\in\Z}$ as $x_k=x_{k[N]}+2\left\lfloor \cfrac{k}{N}\right\rfloor\pi$ where $k[N]$ is the rest of the euclidean division of $k$ by $N$.
\begin{proposition}[Periodic discrete comparison principle]
\label{prop:discrete comp principle}
Let $a\in\R$ and $a\le x_0<x_1<...<x_{2N-1}< a+2\pi$, $a\le y_0<y_1<...<y_{2N-1}< a+2\pi$ be two families of $2N$ real numbers and consider the trigonometric polynomials $P_N(X)=\prod_{i=0}^{2N-1}\sin\left(\frac{X-x_i}{2}\right)$ and $Q_N(X)=\prod_{i=0}^{2N-1}\sin\left(\frac{X-y_i}{2}\right)$. We consider the periodic families $(x_i)_{i\in\Z}$ and $(y_i)_{i\in\Z}$ associated to the $(x_i)_{0\le i\le 2N-1}$ and $(y_i)_{0\le i\le 2N-1}$. Let $(x_i')_{i\in\Z}$ and $(y_i')_{i\in\Z}$ be such that $x_i'$ is the only root of the trigonometric polynomial $P_N'$ in $]x_i,x_{i+1}[$ and $y_i'$ is the only root of the trigonometric polynomial $Q_N'$ in $]y_i,y_{i+1}[$.
Assume that for all $i\in\Z$, $x_i\le y_i$, then for all $i\in\Z$, $x_i'\le y_i'$.
\end{proposition}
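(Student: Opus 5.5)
We mimic the proof of Proposition \ref{prop:comp principle}, using the identity \eqref{Formula p'/p} in place of the partial fraction decomposition of $P'/P$. Fix $k\in\Z$. Since $x_k'$ is a root of $P_N'$ and not a root of $P_N$, and similarly for $y_k'$, \eqref{Formula p'/p} gives
\begin{equation*}
0=\sum_{i=0}^{2N-1}\cotan\left(\cfrac{x_k'-x_i}{2}\right),\qquad 0=\sum_{i=0}^{2N-1}\cotan\left(\cfrac{y_k'-y_i}{2}\right).
\end{equation*}
By $2\pi$-periodicity of $\cotan(\cdot/2)$ we may reindex the sums over the window $i=k-N+1,\dots,k+N$ of the periodized families, since these are $2N$ consecutive indices. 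For these indices we have $x_k'-x_i\in(0,2\pi)$ when $k-N+1\le i\le k$ and $x_k'-x_i\in(-2\pi,0)$ when $k+1\le i\le k+N$. The same holds for $y$, with the same index ranges, because $x_k'\in(x_k,x_{k+1})$, $y_k'\in(y_k,y_{k+1})$ and the periodized sequences are increasing with $x_{i+2N}=x_i+2\pi$.

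Subtracting the two identities and using the subtraction formula
\begin{equation*}
\cotan a-\cotan b=\cfrac{\sin(b-a)}{\sin a\,\sin b}
\end{equation*}
with $a=(x_k'-x_i)/2$ and $b=(y_k'-y_i)/2$, we get
\begin{equation*}
0=\sum_{i=k-N+1}^{k+N}\cfrac{\sin\left(\frac{(y_k'-x_k')-(y_i-x_i)}{2}\right)}{\sin\left(\frac{x_k'-x_i}{2}\right)\sin\left(\frac{y_k'-y_i}{2}\right)}.
\end{equation*}
For each $i$ in the window, $a$ and $b$ lie in the same interval, either $(0,\pi)$ or $(-\pi,0)$. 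Hence the denominators $D_i$ are strictly positive.

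We argue by contradiction and suppose $y_k'<x_k'$. The hypothesis gives $y_i-x_i\ge 0$ for all $i$, so each argument $\frac{(y_k'-x_k')-(y_i-x_i)}{2}$ is strictly negative. We must also check that it is greater than $-\pi$, so that the sine is strictly negative. Since $a,b$ lie in the same interval of length $\pi$, we have $b-a\in(-\pi,\pi)$, and therefore every numerator is strictly negative. With every $D_i>0$, the sum is strictly negative, which contradicts the fact that it equals $0$. Thus $x_k'\le y_k'$ for every $k$.

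The main obstacle is the bookkeeping in the choice of window. One must make sure that the same index set places $x_k'-x_i$ and $y_k'-y_i$ on the same side, so that the denominators are positive, and that the differences stay in $(-2\pi,2\pi)$, so that the sine of $b-a$ has the sign of $b-a$. Both facts follow from the interlacing $x_k<x_k'<x_{k+1}$ and $y_k<y_k'<y_{k+1}$, together with the choice of $2N$ consecutive indices centered at $k$.
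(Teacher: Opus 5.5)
Your proof is correct and is essentially the paper's argument. You evaluate the cotangent identity at $x_k'$ and $y_k'$, reindex by periodicity so that each pair $(x_k'-x_i)/2$, $(y_k'-y_i)/2$ lies on the same branch of $\cotan$, and compare termwise using the monotonicity of $\cotan$ on that branch; your formula $\cotan a-\cotan b=\sin(b-a)/(\sin a\sin b)$ just makes this monotonicity explicit.

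The differences are cosmetic. The paper uses the one-sided window $i=k+1,\dots,k+2N$, which puts all arguments in $(-\pi,0)$. It then concludes directly that some index has $b_i\ge a_i$, giving $y_k'-x_k'\ge y_{k+i}-x_{k+i}\ge 0$; this is the contrapositive of your contradiction.
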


\begin{proof}
Let $k\in\Z$. 
We follow the proof of the real case. We evaluate $$\cfrac{P'(X)}{P(X)}=\cfrac{1}{2}\sum_{i=0}^{2N-1}\cotan\left(\frac{X-x_i}{2}\right)$$ in $x_k'$ and the same thing for $Q$. 
We get the following identities: 
\begin{equation}
\left\{ 
\begin{split}
0&=\sum_{i=0}^{2N-1}\cotan\left(\frac{x_k'-x_i}{2}\right)\\
0&=\sum_{i=0}^{2N-1}\cotan\left(\frac{y_k'-y_i}{2}\right)
\end{split}
\label{system2}
\right.
\end{equation}
Since the $(x_i)_{i\in\Z}$ and $(y_i)_{i\in\Z}$ are periodic we can rewrite the previous equations as: 
\begin{equation}
\left\{ 
\begin{split}
0&=\sum_{i=1}^{2N}\cotan\left(\frac{x_k'-x_{k+i}}{2}\right)\\
0&=\sum_{i=1}^{2N}\cotan\left(\frac{y_k'-y_{k+i}}{2}\right)
\end{split}
\label{system3}
\right.
\end{equation}
Subtracting the two equations of \eqref{system3}: \begin{equation}
\label{exprracine2}
0=\sum_{i=1}^{2N}\left[\cotan\left(\frac{x_k'-x_{k+i}}{2}\right)-\cotan\left(\frac{y_k'-y_{k+i}}{2}\right)\right]
\end{equation}
For all $i\in\Z$, let $a_i=(x_k'-x_{k+i})/2$ and $b_i=(y_k'-y_{k+i})/2$. By definition, for all $i\in\{1,...2N\}$, $a_i,b_i\in(-\pi,0)$. Since $\cotan$ is decreasing on $(-\pi,0)$ and $$0=\sum_{i=1}^{2N}\left[\cotan(a_i)-\cotan(b_i)\right],$$ there exists $i$ such that $b_i\ge a_i$. For this $i$ we have $y_k'-y_{k+i}\ge x_k'-x_{k+i}$ which implies $y_k'-x'_k\ge y_{k+i}-x_{k+i}\ge 0$ by hypothesis.
\end{proof}

\subsection{An heuristic of the derivation of the viscosity solutions of the PDE}
In this section we tried to obtain the solutions of the PDE as a limit of a system of particles as in the heuristic of Section \ref{section: heuristic pde}. The problem is technical and difficult to obtain. In \cite{kiselev2022} authors succeed to obtain a notion of convergence of a system of particle towards the smooth solution of the PDE under some assumptions at initial time on the particles and the smoothness of the initial data of the PDE. Quite often the theory of viscosity solutions allows to avoid such assumptions by an extensive use of discrete comparison principles \cite{barles1991convergence}. Even if we were not able to apply this strategy in its entirety, we present the partial proof that we were able to find. Let us also mention that this approach follows the steps of the proof of a similar result obtained in the study of the eigenvalues of large random matrices \cite{bertucci2022spectral,bertucci2025new}.
\newline
\tab For $(\lambda_0,...,\lambda_{2N-1})\in\T^{2N}$ we introduce the empirical measure $\mu_{2N}\in\mesT$ associated to $(\lambda_0,...,\lambda_{2N-1})$: $$\mu_{2N}=\cfrac{1}{2N}\sum_{i=0}^{2N-1}\delta_{\lambda_i}\in\mesT.$$
This definition shall also be used if we consider a $2N$ periodic system of particles $(\lambda_i)_{i\in \Z}\in\T^{\Z}$ (i.e. for all $i\in\Z$, $\lambda_i=\lambda_{i[2N]}$ modulo $2\pi$ with $i[2N]$ the rest of the euclidean division of $i$ by $2N$) by defining the empirical measure on $\mesT$ associated to these particles as $$\mu_{2N}=\cfrac{1}{2N}\sum_{i\in\Z/2N\Z}\delta_{\lambda_i}\in\mesT.$$
\newline
\tab For $a\in\R$, let $a\le\lambda_0^0<\lambda_0^1<...<\lambda_0^{2N-1}<a+2\pi$ be $2N$ initial particles. We associate to this family the trigonometric polynomial $$p_{2n}(x)=\prod_{j=0}^{2N-1}\sin\left(\cfrac{x-\lambda_0^j}{2}\right).$$ We define for $i\in \Z$ the $i^{th}$ initial particle $\lambda_0^i=\lambda_0^{i[2N]}+2\left\lfloor \frac{i}{2N}\right\rfloor\pi$. 
\newline
We construct the following particles $(\lambda_t^{i})_{i\in\Z,t\ge 0}$ by : for $t\in[0,\frac{1}{2N})$, $\lambda_t^{i}=\lambda_0^{i}$ and  for $k\in\N_{\ge 1}$ and $t\in \left[\frac{k}{2N},\frac{k+1}{2N}\right)$, $\lambda_t^{i}$ is the unique root of $p_{2N}^{(k)}$ in $\left(\lambda_{\frac{k-1}{2N}}^{i},\lambda_{\frac{k-1}{2N}}^{i+1}\right)$. Formally, we start from the  family $(\lambda_0^i)_{i\in\Z}$ which are the roots of $p_{2n}$, we wait a time $\Delta t_N=\frac{1}{2N}$, then at the time $t=\frac{1}{2N}$, the particles jump to the roots of the derivative of $p_{2N}$ and we repeat the procedure. 
\newline
For all $t\ge 0$, for all $\theta\in\R$, we define $$F_{2N}(t,\theta):=F_{\mu_{2N}(t)}(\theta),$$ where $\mu_{2N}(t)$ is the empirical measure associated to $(\lambda_t^i)_{i\in \Z}$ and where we recall that $F_\mu$ is the cumulative distribution function of $\mu\in\mesT$ introduced in Section \ref{subsection:Primitiveequation}.
\newline
We also define the upper semi continuous function: $$F^*(t,\theta):=\underset{N\to\infty,\,t_N\to t,\, \theta_N\to\theta}{\limsup}F_{2N}(t_N,\theta_N).$$
The goal of this section is to give the heuristic of the following statement.
\begin{equation}
\nonumber
\boxed{
\begin{split}
&\text{Assume that the empirical measure } \mu_N^0 \text{ of initial conditions defined by: } \mu_N^0=\frac{1}{2N}\sum_{i=0}^{2N-1}\delta_{\lambda_0^i}\\ 
&\text{converges toward a measure }\mu_0\in\mesT \text{ and that there exists } m>0 \text{ such}\text{ that } F^* \text{ satisfies } (H_m).\\
&\text{Then, } F^* \text{ is the unique viscosity solution of } \eqref{eqPrimitive} \text{ which satisfies } F(0,\theta)=F_{\mu_0}(\theta) \text{ almost everywhere.}
\end{split}}
\end{equation}
We cut the heuristic in different steps.
\subsubsection{Step 1: Notation and setting}
\label{subsubsectionstep1}
We shall use the following notations: $u_N\lesssim v_N$, $u_N=O(v_N)$, $u_N=o(v_N)$ and $u_N\sim v_N$ if there exists a constant $C$ independent of $N$ such that $ u_N\le C v_N$, $(u_N/v_N)_N$ is bounded, $(u_N/v_N)_N$ goes to 0 when $N$ goes to $+\infty$ and $u_N=O(v_N)$ and $v_N=O(u_N)$. These notations will always refer to the number of particles $2N$ that shall go to $+\infty$. 
\newline
We want to heuristically explain why $F^*$ is a subsolution of $\eqref{eqPrimitive}$. The supersolution statement should follow by the same arguments.
\newline
First, $F^*$ is upper semi continuous and is in $\mathcal F_{t,2\pi}$.
\newline
Take a small $\delta>0$ to be specified later on, $\phi\in C^{1,1}_{t,2\pi}$ and $(t_0,\theta_0)\in (0,+\infty)\times \R$ such that $(F^*-\phi) (t_0,\theta_0)=0$ and $(F^*-\phi)(t,\theta)< 0$ for any $(t,\theta)\in B((t_0,\theta_0),\delta)-(t_0,\theta_0)$ and $\partial_\theta(\phi(t_0,\theta_0)>m$ as explained in Remark \ref{remark fonction test}.
\newline
We want to prove that: $$\partial_t \phi(t_0,\theta_0)+\cfrac{1}{\pi}\left(\arctan\left(\cfrac{I_{1,\delta}[\phi(t_0,.)](\theta_0)+I_{2,\delta}[F^*(t_0,.)](\theta_0)}{\partial_\theta\phi(t_0,\theta_0)}\right)+\cfrac{\pi}{2}\right)\le 0.$$
In spite of $\phi$ we can consider a function $\phi_1$ such that $\phi_1$ is equal to $\phi$ in $B((t_0,\theta_0),\delta)$, is in $C^{1,1}_{t,2\pi}$, is non-decreasing and satisfies that $\phi(t,\theta+2\pi)=\phi(t,\theta)+1$ for all $t>0$ and $\theta\in\R$. If we prove the last inequality with $\phi_1$ instead of $\phi$ it will implies the result for $\phi$ because $\phi$ and $\phi_1$ are equal around $(t_0,\theta_0)$. We shall now use $\phi_1$ in spite of $\phi$ but we still call it $\phi$.
\newline
We consider $I_\delta=(\theta_0-\delta,\theta_0+\delta)$. We can choose $\delta$ small enough such that $\phi(t,.)$ is strictly increasing on $I_\delta$ for $t$ near of $t_0$ since $\partial_\theta\phi(t_0,\theta_0)>0$. In what follows we choose such $\delta$. We write $J_\delta(t)=\phi(t,I_{\delta})$ which is an open interval of positive length that contains $\phi(t_0,\theta_0)$ again since $\partial_\theta\phi(t_0,\theta_0)>0$ for $t$ near $t_0$.
\newline
We also consider an interval $I$ of length $2\pi$ such that $I_\delta\subset \overset\circ I$. 
\newline
We consider $s_N:=\frac{k(N)}{2N}$ that shall go to $0$ and that will be specified later on.
\newline
We change the numbering of our $\lambda_{t_0-s_N}$ with respect to the interval $I$ which means that if $I$ is $[a,a+2\pi)$ for an $a\in \R$, we write: $$a\le \lambda_{t_0-s_N}^0< \lambda_{t_0-s_N}^1<...<\lambda_{t_0-s_N}^{2N-1}<a+2\pi.$$
For all $i$ mod $2N$, $\lambda_{t_0-s_N}^i $ mod $2\pi$ is associated to a class $\frac{i}{2N}$ mod $1$ defined by $F_N(t_0-s_N,\lambda_{t_0-s_N}^i)=\frac{i}{2N}[1]$. 
\newline
For the $i$ mod $2N$ such that $\frac{i}{2N}$ mod 1 is in $J_\delta(t_0-s_N)$ mod 1, we define $\gamma_{t_0-s_N}^i$ mod $2\pi$ which is equal to $(\phi(t_0-s_N))^{-1}(\frac{i}{2N})$ mod $2\pi$ (this definition makes sense since $\phi(t_0-s_N,\theta+2\pi)=\phi(t_0-s_N,\theta)+1$ for all $\theta$ and we recall that $\phi(t_0-s_N,.)$ is strictly increasing on $I_\delta$ for $N$ large enough).
By abuse of notation we write $\gamma_{t_0-s_N}^i$ the unique real in $I$ which represents the previous class modulo $2\pi$. More exactly $\gamma_{t_0-s_N}^i$ is in $I_\delta$ by definition of $J_\delta$. 
\newline
We define the new system of particles $(\mu^i_.)_{i\in\Z}$ by the initial data given by: 
\begin{equation}
\left\{
\begin{split}
\mu_{t_0-s_N}^i&=\lambda_{t_0-s_N}^i \text{ if } \frac{i}{2N} \text{ mod 1 is not in } J_\delta(t_0-s_N) \text { mod 1}\\
\mu_{t_0-s_N}^i&=\min(\gamma_{t_0-s_N}^i,\lambda_{t_0-s_N}^i) \text{  if } \frac{i}{2N} \text{ mod 1 is in } J_\delta(t_0-s_N) \text { mod 1}
\end{split}
\right.
\end{equation}
Then $(\mu^i_{t})_{i\in\Z,\, t\ge t_0-s_N}$ is defined with the same dynamic as the $(\lambda_t^i)_{i\in\Z,\, t\ge 0}$. More exactly let $q_{2N}$ be the trigonometric polynomial whose roots are $(\mu^i_{t_0-s_N})_{i\in\Z}$. During a time $\Delta t_N=\frac{1}{2N}$, the particles do not move and at time $t_0-s_N+\frac{1}{2N}$ they are replaced by the roots of $q_{2N}'$ and we repeat the process.
\subsubsection{Step 2: Consequence of the discrete comparison principle}
Since $I_\delta\subset \overset\circ I$ and $F^*\le \phi$ in $I_\delta$ we have that for all $i\in\Z$, $\mu_{t_0-s_N}^i\le \lambda_{t_0-s_N}^i$. Hence, by the discrete comparison principle of Proposition \ref{prop:discrete comp principle} we deduce that $\mu_{t_0}^i\le \lambda_{t_0}^i$.
\newline
By definition of $F^*$, for all $N$ we consider an index $i_0(N)$ which satisfies: 
\begin{equation}
\left\{
\begin{split}
\limsup_N \lambda_{t_0-s_N}^{i_0(N)}&\le \theta_0\\
\frac{i_0(N)}{2N}&\to F^*(t_0,\theta_0).
\end{split}
\right.
\end{equation}
Hence for $N$ large enough we have: $$\phi(t_0-s_N,\mu_{t_0-s_N}^{i_0(N)})-\phi(t_0,\mu_{t_0}^{i_0(N)})\ge \cfrac{i_0(N)}{2N}-\phi(t_0,\lambda_{t_0}^{i_0(N)}), $$ by definition of $\mu_{t_0-s_N}^{i}$ and since $\phi$ is non-decreasing. By construction we get $$\liminf_N \phi(t_0-s_N,\mu_{t_0-s_N}^{i_0(N)})-\phi(t_0,\mu_{t_0}^{i_0(N)})\ge F^*(t_0,\theta_0)-\phi(t_0,\theta_0)=F^*(t_0,\theta_0)-F^*(t_0,\theta_0)=0.$$
Choosing $s_N$ of the form $s_N=\frac{k(N)}{2N}$ that goes to 0 sufficiently slowly we obtain
\begin{equation}
\label{liminfborn}
\liminf_N\cfrac{\phi(t_0-s_N,\mu_{t_0-s_N}^{i_0(N)})-\phi(t_0,\mu_{t_0}^{i_0(N)})}{s_N}\ge 0.
\end{equation}
\subsubsection{Step 3: Dynamic of the particles}
We compute the evolution of $\phi$ along the flow of the particles: 
\begin{align*}
\cfrac{\phi(t_0,\mu_{t_0}^{i_0(N)})-\phi(t_0-s_N,\mu_{t_0-s_N}^{i_0(N)})}{s_N}&=\cfrac{\phi(t_0,\mu_{t_0}^{i_0(N)})-\phi(t_0-s_N,\mu_{t_0}^{i_0(N)})}{s_N}\\
&+\cfrac{\phi(t_0-s_N,\mu_{t_0}^{i_0(N)})-\phi(t_0-s_N,\mu_{t_0-s_N}^{i_0(N)})}{s_N}
\end{align*}
Taking the superior limit in the previous equality, \eqref{liminfborn} yields: \begin{equation}
\label{limsuperior equa}
\partial_t\phi(t_0,\theta_0)+\limsup_N\cfrac{\phi(t_0-s_N,\mu_{t_0}^{i_0(N)})-\phi(t_0-s_N,\mu_{t_0-s_N}^{i_0(N)})}{s_N}\le 0.
\end{equation}
Doing a Taylor expansion with respect to the space variable:
\begin{equation}
\label{Taylor exp}
\cfrac{\phi(t_0-s_N,\mu_{t_0}^{i_0(N)})-\phi(t_0-s_N,\mu_{t_0-s_N}^{i_0(N)})}{s_N}=\cfrac{\mu_{t_0}^{i_0(N)}-\mu_{t_0-s_N}^{i_0(N)}}{s_N}\,\,\partial_\theta\phi(t_0-s_N,\mu_{t_0-s_N}^{i_0(N)})+O\left(\cfrac{\left(\mu_{t_0}^{i_0(N)}-\mu_{t_0-s_N}^{i_0(N)}\right)^2}{s_N}\right).
\end{equation}
We see that the quantity that we have to understand is:
\begin{equation}
\label{eq:quantitytostudy}\frac{\mu_{t_0}^{i_0(N)}-\mu_{t_0-s_N}^{i_0(N)}}{s_N}=\frac{1}{k(N)}\sum_{j=0}^{k(N)-1}2N\left[\mu_{t_0-s_N+\frac{j+1}{2N}}^{i_0(N)}-\mu_{t_0-s_N+\frac{j}{2N}}^{i_0(N)}\right].
\end{equation}
We expect that
\begin{equation}
\label{eq:boundexpected}
\cfrac{\mu_{t_0}^{i_0(N)}-\mu_{t_0-s_N}^{i_0(N)}}{s_N}\lesssim 1.
\end{equation}
We prove this estimate in Appendix in Lemma \eqref{lemma: ecart racine proche derive un espace} for the case $\boldmath{s_N=\frac{1}{2N}}$ (so the case $k(N)=1$) which corresponds to when there is just one derivation.
\newline
Assuming \eqref{eq:boundexpected}, this implies that: $$\limsup_N \cfrac{\mu_{t_0}^{i_0(N)}-\mu_{t_0-s_N}^{i_0(N)}}{s_N}<+\infty \text{  and  } \cfrac{\left(\mu_{t_0}^{i_0(N)}-\mu_{t_0-s_N}^{i_0(N)}\right)^2}{s_N}=o(1).$$
Using \eqref{limsuperior equa} and \eqref{Taylor exp}, we should get: 
\begin{equation}
\label{visco etape 3}
\partial_t\phi(t_0,\theta_0)+\limsup_N\cfrac{\mu_{t_0}^{i_0(N)}-\mu_{t_0-s_N}^{i_0(N)}}{s_N}\,\partial_\theta\phi(t_0,\theta_0)\le 0.
\end{equation}
We have to understand how the particles interact to study $\limsup_N\frac{\mu_{t_0}^{i_0(N)}-\mu_{t_0-s_N}^{i_0(N)}}{s_N}$ and more precisely to understand rigorously the flow of polynomial roots under differentiation.
Set $$L(t_0,\theta_0):=\limsup_N\cfrac{\mu_{t_0}^{i_0(N)}-\mu_{t_0-s_N}^{i_0(N)}}{s_N}.$$
Up to considering a subsequence, from now on we will consider  that :$$L(t_0,\theta_0)=\underset{N\to\infty}{\lim}\cfrac{\mu_{t_0}^{i_0(N)}-\mu_{t_0-s_N}^{i_0(N)}}{s_N}.$$
\subsubsection{Step 4: Interaction between particles}
The main difficult part is to show the following inequality for the behaviour of the flow of particles. We present a proof in the case $s_N=\frac{1}{2N}$ that corresponds to one derivation.
\begin{proposition}
\label{prop:interaction particles}
Assume that $s_N=\frac{1}{2N}$, we have the following inequality: 
\begin{equation}
\label{eq: passage to limit1}
\begin{split}
0\ge& \partial_\theta \phi(t_0,\theta_0)\cotan(\pi L(t_0,\theta_0)\partial_\theta \phi(t_0,\theta_0))+\\
&I_{1,\delta}[\phi(t_0,.)](\theta_0)-\cotan\left(\cfrac{\delta}{2}\right)\left(\phi(t_0,\theta_0+\delta)+\phi(t_0,\theta_0-\delta)\right)+\\
&I_{2,\delta}[F^*(t_0,.)](\theta_0)+\cotan\left(\cfrac{\delta}{2}\right)(F^*(t_0,\theta_0+\delta)+F^*(t_0,\theta_0-\delta))\\
&=: \partial_\theta \phi(t_0,\theta_0)\cotan(\pi L(t_0,\theta_0)\partial_\theta \phi(t_0,\theta_0))+\mathcal{I}(\delta,t_0,\theta_0).
\end{split}
\end{equation}
This implies that: 
\begin{equation}
\label{eq:isolating1}
L(t_0,\theta_0)\ge \cfrac{1}{\pi\partial_\theta \phi(t_0,\theta_0)} \arcot\left(-\cfrac{\mathcal I(\delta,t_0,\theta_0)}{\partial_\theta\phi(t_0,\theta_0)}\right),
\end{equation}
where $\arcot$ is the reciprocal function of $\cotan :\, ]0,\pi[\to \R$.
\end{proposition}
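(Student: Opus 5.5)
The plan is to start from the identity defining the new position after one derivation. With $s_N=\frac{1}{2N}$, write $i_0=i_0(N)$, $\mu^i:=\mu^i_{t_0-s_N}$ and $\mu'^{i_0}:=\mu^{i_0}_{t_0}$. Evaluating \eqref{Formula p'/p} for $q_{2N}$ at its critical point gives, after dividing by $2N$,
\begin{equation*}
0=\cfrac{1}{2N}\sum_{i=i_0+1}^{i_0+2N}\cotan\left(\cfrac{\mu'^{i_0}-\mu^{i}}{2}\right).
\end{equation*}
Since $\mu'^{i_0}-\mu^{i_0}=L/(2N)+o(1/N)$, all $\mu^i$ near $\mu^{i_0}$ and $\mu'^{i_0}$ converge to $\theta_0$. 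Indeed $\mu^{i_0}\le\lambda^{i_0}$, $\limsup\lambda^{i_0}\le\theta_0$, and the equality $\phi(\mu^{i_0})\approx i_0/2N\to\phi(t_0,\theta_0)$ forces convergence. I would split the sum into three parts:
\begin{itemize}
\item the near field, $|\mu^i-\theta_0|\le\delta$, i.e.\ the indices with $\frac{i}{2N}\in J_\delta$;
\item the far field;
\item the few indices adjacent to $i_0$ (bounded $|i-i_0|\le K$).
\end{itemize}
The near-field particles are, up to the $\min$ with $\lambda$, exactly the quantiles $\gamma^i=\phi(t_0-s_N,\cdot)^{-1}(i/2N)$ of the smooth profile $\phi$.

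For the far field, $\mu^i=\lambda^i$ outside $J_\delta$, and $\cotan((\mu'^{i_0}-x)/2)$ is bounded and continuous for $|x-\theta_0|\ge\delta$. So the far-field sum is $\int_{|x-\theta_0|>\delta}\cotan(\frac{\theta_0-x}{2})\,d\mu_{2N}(x)$. Integrating by parts against $F_{2N}$ produces $I_{2,\delta}[F_{2N}]$ plus boundary terms $\cotan(\delta/2)\,(F(\theta_0+\delta)+F(\theta_0-\delta))$ (up to the sign and the constant normalisation in $A_0$). Taking $\limsup$ and using that $F_{2N}\to F^*$ upper semicontinuously, with the kernel $1/\sin^2\ge0$, gives the required one-sided inequality with $F^*$. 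The sign works because the far-field cotangent sum is a decreasing functional of the positions.

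For the near field, $\mu^i\le\gamma^i$ and $\cotan$ is monotone on each side. Replacing $\mu^i$ by $\gamma^i$ therefore moves the sum in the direction needed for ``$0\ge$''. The sum over the $\gamma^i$ is a Riemann sum for the principal-value integral against $d\phi$, which yields $I_{1,\delta}[\phi(t_0,\cdot)]$ and boundary terms $-\cotan(\delta/2)(\phi(\theta_0\pm\delta))$. The exception is the mesoscopic window $|i-i_0|\lesssim N^{1/2}$, where Steinerberger's argument applies. There $\gamma^i-\mu'^{i_0}\approx\frac{i-i_0}{2N\partial_\theta\phi}-\frac{L}{2N}$, and Euler's identity $\cotan z=\sum_k (z+k\pi)^{-1}$ gives $\partial_\theta\phi\,\cotan(\pi L\,\partial_\theta\phi)$ up to the normalisation.

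The main obstacle is controlling the particles $\mu^i$ with $|i-i_0|$ small, which need not be near $\gamma^i$; only $\mu^i\le\gamma^i$ holds. I would handle them by monotonicity, since pushing them left only decreases the sum in the relevant direction. I would also use the exact equality at $i_0$ given by maximality, together with the bound \eqref{eq:boundexpected} from the appendix lemma, to keep $L$ finite and the window estimates uniform.

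Assembling the three contributions gives \eqref{eq: passage to limit1}. Finally, \eqref{eq:isolating1} follows by inverting. We have $\pi L\,\partial_\theta\phi\in(0,\pi)$ by the interlacing of roots, because $\mu'^{i_0}$ lies before the next particle, which is at distance about $1/(2N\partial_\theta\phi)$. On $(0,\pi)$ the function $\cotan$ is decreasing, so $\cotan(\pi L\partial_\theta\phi)\le-\mathcal I/\partial_\theta\phi$ is equivalent to $\pi L\partial_\theta\phi\ge\arcot(-\mathcal I/\partial_\theta\phi)$.
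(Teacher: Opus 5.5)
Your decomposition is the paper's. You split the master identity into a far field, a near field and a $\sqrt N$-window. The far field is handled by integration by parts with $\limsup F_{2N}\le F^*$, the near field by a principal-value Riemann sum against $\partial_\theta\phi$, and the window by Euler's identity, followed by inversion of $\cotan$ on $(0,\pi)$.

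The step you add for near indices with $\mu^i=\lambda^i<\gamma^i$ goes the wrong way. For fixed $y=\mu^{i_0}_{t_0}$,
\[
\frac{d}{dx}\cotan\Big(\frac{y-x}{2}\Big)=\frac{1}{2\sin^2\big(\frac{y-x}{2}\big)}>0,
\]
so $\Sigma(x):=\frac{1}{2N}\sum_i\cotan\big(\frac{y-x^i}{2}\big)$ is \emph{increasing} in each position $x^i$ on either side of $y$. This is exactly the monotonicity the paper uses to bound $S_{2,N}$ from below by $\sum_j\int_{(j-1)/2N}^{j/2N}$. It is also the content of Proposition~\ref{prop:discrete comp principle}: moving roots left moves the critical point left.

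Hence $\mu^i\le\gamma^i$ gives $0=\Sigma(\mu)\le\Sigma(\gamma)$. That is an \emph{upper} bound on the near-field sum, equivalently an upper bound on $L$. But deriving \eqref{eq: passage to limit1} from $0=S_{1,N}+S_{2,N}+S_{3,N}$ requires \emph{lower} bounds on $S_{2,N}$ and $S_{3,N}$, and \eqref{eq:isolating1} is a lower bound on $L$.

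Your far-field justification (``a decreasing functional of the positions'') is misstated for the same reason. The far-field bound holds because the sum increases with the positions and the $\lambda^i$ lie asymptotically to the right of the $F^*$-quantiles. With that same monotonicity, the near field, where the true particles lie to the left of the model points $\gamma^i$, cannot be handled by comparison. Moreover, for $i=i_0$ the point $\gamma^{i_0}$ may lie to the right of $\mu^{i_0}_{t_0}$, so ``monotone on each side'' does not even apply.

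The paper does not compare at this point. In $S_{2,N}$ and in the window it sets $\mu^j_{t_0-s_N}=\phi(t_0-s_N,\cdot)^{-1}(j/2N)$ exactly. That is, it uses that the minimum defining $\mu$ is attained by $\gamma$ on the near indices, and the only monotonicity it needs there is that of the integrand in the quantile variable. To close your argument you need this identification, or a quantitative proof that the indices with $\lambda^i<\gamma^i$ contribute $o(1)$. That is not automatic at scale $1/N$ next to $i_0$, and monotonicity cannot supply it.

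There is also a secondary omission. The Euler-identity comparison in the window needs more than \eqref{eq:boundexpected}, which is just interlacing. The terms with $j$ adjacent to $i_0$ require the two-sided gap bound of Lemma~\ref{lemma: ecart racine proche derive un espace}. Its nontrivial part is that $\mu^{i_0}_{t_0}$ stays at distance $\gtrsim 1/N$ from the nearer of its two neighbours. The paper extracts this from the master identity itself, using the near-symmetry $2i_0-j_+-j_-=O(1)$ of the window.
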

The proof of this result in the case $s_N=\frac{1}{2N}$ is referred in Appendix \ref{sectionproofofprop}.
\newline
Using that $\arcot(-\theta)=\arctan(\theta)+\pi/2$ for all $\theta\in\R$, Proposition \ref{prop:interaction particles} and \eqref{visco etape 3}, we get the following result.

\begin{proposition}
Assume that $s_N=\frac{1}{2N}$, we have
\begin{equation}
\label{eq:presque1}
\partial_t\phi(t_0,\theta_0)+\cfrac{1}{\pi}\left(\arctan\left(\cfrac{\mathcal I(\delta,t_0,\theta_0)}{\partial_\theta\phi(t_0,\theta_0)}\right)+\cfrac{\pi}{2}\right)\le 0
\end{equation}

\end{proposition}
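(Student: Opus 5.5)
The plan is to combine inequality \eqref{visco etape 3} with the lower bound \eqref{eq:isolating1} of Proposition \ref{prop:interaction particles}. The only extra ingredients are the sign of $\partial_\theta\phi(t_0,\theta_0)$ and the identity $\arcot(-x)=\arctan(x)+\pi/2$.

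First, recall the normalisation of Step 1. The test function satisfies $\partial_\theta\phi(t_0,\theta_0)>m>0$, as in Remark \ref{remark fonction test}. We work in the case $s_N=\frac{1}{2N}$, where the bound \eqref{eq:boundexpected} is available from the Appendix, so the Taylor remainder in \eqref{Taylor exp} is $o(1)$. Hence \eqref{visco etape 3} holds in the form
\begin{equation*}
\partial_t\phi(t_0,\theta_0)+L(t_0,\theta_0)\,\partial_\theta\phi(t_0,\theta_0)\le 0 .
\end{equation*}
Here I use that $L(t_0,\theta_0)$ is a genuine limit along the extracted subsequence, so the $\limsup$ of the product equals $L(t_0,\theta_0)\,\partial_\theta\phi(t_0,\theta_0)$. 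Passing from $\partial_\theta\phi(t_0-s_N,\mu^{i_0(N)}_{t_0-s_N})$ to $\partial_\theta\phi(t_0,\theta_0)$ needs continuity of $\partial_\theta\phi$ and the convergence $\mu^{i_0(N)}_{t_0-s_N}\to\theta_0$. That convergence follows from the construction of $i_0(N)$ and from the ordering $\mu\le\lambda$ combined with $\mu$ being pinned near $\gamma$ inside $I_\delta$.

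Second, I multiply \eqref{eq:isolating1} by $\partial_\theta\phi(t_0,\theta_0)>0$, which preserves the inequality:
\begin{equation*}
L(t_0,\theta_0)\,\partial_\theta\phi(t_0,\theta_0)\ge \cfrac{1}{\pi}\arcot\left(-\cfrac{\mathcal I(\delta,t_0,\theta_0)}{\partial_\theta\phi(t_0,\theta_0)}\right)=\cfrac{1}{\pi}\left(\arctan\left(\cfrac{\mathcal I(\delta,t_0,\theta_0)}{\partial_\theta\phi(t_0,\theta_0)}\right)+\cfrac{\pi}{2}\right).
\end{equation*}
Chaining this with the previous display gives exactly \eqref{eq:presque1}.

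For completeness I would also recheck how \eqref{eq:isolating1} follows from \eqref{eq: passage to limit1}. By Step 2 and the interlacing of roots, $\pi L\,\partial_\theta\phi$ lies in $[0,\pi]$. On $(0,\pi)$ the map $x\mapsto\cotan x$ is decreasing, so $\cotan(\pi L\partial_\theta\phi)\le -\mathcal I/\partial_\theta\phi$ inverts to $\pi L\partial_\theta\phi\ge\arcot(-\mathcal I/\partial_\theta\phi)$.

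The main obstacle I expect is the endpoint case $\pi L\partial_\theta\phi\in\{0,\pi\}$, where $\cotan$ is infinite. At $\pi$ the bound holds trivially, since $\arcot$ takes values below $\pi$. At $0$, inequality \eqref{eq: passage to limit1} would read $0\ge+\infty$, which is a contradiction, so this case is excluded; that rules it out. The other delicate point is justifying \eqref{visco etape 3} itself, which rests on the one-step estimate \eqref{eq:boundexpected}. I would simply invoke the Appendix lemma for $k(N)=1$.
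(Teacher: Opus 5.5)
Your proposal is correct and is the paper's own argument: the paper obtains \eqref{eq:presque1} by chaining \eqref{visco etape 3} with the lower bound \eqref{eq:isolating1} of Proposition \ref{prop:interaction particles}, multiplied by $\partial_\theta\phi(t_0,\theta_0)>0$, and then using $\arcot(-x)=\arctan(x)+\pi/2$. Your endpoint discussion is harmless but unnecessary, because Lemma \ref{lemma: ecart racine proche derive un espace} already keeps $\pi L(t_0,\theta_0)\,\partial_\theta\phi(t_0,\theta_0)$ strictly inside $(0,\pi)$.
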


\subsubsection{Step 5: Conclusion}
We nearly have the result we wanted.
We shall use an approximation of $\phi$ to conclude. 
Indeed thanks to Arisawa's lemma (\cite{arisawa2008}, Lemma 2.1) we can find a sequence of smooth function $(\phi_k)_{k\in\N}$ such that for all $k$ we have $\phi_k(t_0,\theta_0)=\phi(t_0,\theta_0)$, $\partial_t\phi_k(t_0,\theta_0)=\partial_t\phi(t_0,\theta_0)$ and $\partial_\theta\phi_k(t_0,\theta_0)=\partial_\theta\phi(t_0,\theta_0)$, for all $(t,\theta)\in B((t_0,\theta_0),\delta)$, we have $F^*(t,\theta)\le \phi_k(t,\theta)\le\phi(t,\theta)$ and $\phi_k(t_0,.)$ is monotone and decreased to $F^*(t_0,.)$. 
\newline
Hence we can apply the inequality $\eqref{eq:presque1}$ to $\phi_k$ instead of $\phi$. 
We get that for all $k\in \N$:
\begin{equation}
\begin{split}
&\partial_t \phi_k(t_0,\theta_0)+\cfrac{1}{2}+\cfrac{1}{\pi}\arctan((\partial_\theta \phi_k(t_0,\theta_0))_+^{-1} [I_{1,\delta}[\phi_k(t_0,.)](\theta_0)+I_{2,\delta}[F^*(t_0,.)](\theta_0)-\\&\cotan\left(\cfrac{\delta}{2}\right)(\phi_k(t_0,\theta_0+\delta)+\phi_k(t_0,\theta_0-\delta)-F^*(t_0,\theta_0+\delta)-F^*(t_0,\theta_0-\delta))])\le 0.
\end{split}
\end{equation}
So by the construction of the $\phi_k$ we have that for all $k\in\N$: 
\begin{equation}
\begin{split}
\partial_t \phi(t_0,\theta_0&)+\cfrac{1}{2}+\cfrac{1}{\pi}\arctan((\partial_\theta \phi(t_0,\theta_0))_+^{-1}[ I_{1,\delta}[\phi_k(t_0,.)](\theta_0)+I_{2,\delta}[F^*(t_0,.)](\theta_0)-\\&\cotan\left(\cfrac{\delta}{2}\right)(\phi_k(t_0,\theta_0+\delta)+\phi_k(t_0,\theta_0-\delta)-F^*(t_0,\theta_0+\delta)-F^*(t_0,\theta_0-\delta))])\le 0.
\end{split}
\end{equation}

Then we notice that $\phi_k(t_0,.)-\phi(t_0,.)$ has a maximum in $\theta_0$ and so by the ellipticity of  $I_{1,\delta}$ we deduce that: $$ I_{1,\delta}[\phi(t_0,.)](\theta_0)\le I_{1,\delta}[\phi_k(t_0,.)](\theta_0).$$
So thanks to the previous inequality we get that for all $k\in\N$:
\begin{equation}
\begin{split}
\partial_t \phi(t_0,\theta_0&)+\cfrac{1}{2}+\cfrac{1}{\pi}\arctan((\partial_\theta \phi(t_0,\theta_0))_+^{-1} [I_{1,\delta}[\phi(t_0,.)](\theta_0)+I_{2,\delta}[F^*(t_0,.)](\theta_0)-\\&\cotan\left(\cfrac{\delta}{2}\right)(\phi_k(t_0,\theta_0+\delta)+\phi_k(t_0,\theta_0-\delta)-F^*(t_0,\theta_0+\delta)-F^*(t_0,\theta_0-\delta))])\le 0
\end{split}
\end{equation} 
Now we pass to the limit in $k$ and use the fact that $(\phi_k(t_0,.)$ converges point wise to $F^*(t_0,.)$: 
\begin{equation}
\partial_t \phi(t_0,\theta_0)+\cfrac{1}{\pi}\left(\arctan\left(\cfrac{I_{1,\delta}[\phi(t_0,.)](\theta_0)+I_{2,\delta}[F^*(t_0,.)](\theta_0)}{\partial_\theta (\phi(t_0,\theta_0))_+}\right)+\cfrac{\pi}{2}\right)\le 0.
\end{equation} 
Hence $F^*$ is a subsolution of the viscosity equation. 
\newpage
\appendix
\section{Proof of Proposition \ref{prop:interaction particles} in the particular case $s_N=\frac{1}{2N}$}
Before starting, let us mention that the computations of this part are similar to the estimates obtained in \cite{kiselev2022}. We try to keep some of their notation to make it easier for the reader.
\subsection{Notation}
We recall that $I$ is the interval of length $2\pi$ that contains $I_\delta$ introduced in the first step of the heuristic.
For all $N$, we cut the interval $I$ in three parts: a very near part: $\mathcal N_N:=\{\mu\in\R,\, |\mu-\mu^{i_0(N)}_{t_0}|\le 1/\sqrt{N}\}$, a near part: $I_\delta-\mathcal N_N$ and a far away part: $I-I_\delta$.
We define the sets of indexes associated to these sets: $J_{1,N}=\{j\in\Z\,|\,\mu^j_{t_0-s_N}\in\mathcal N_N\}$, $J_{2,N}=\{j\in\Z\,|\,\mu^j_{t_0-s_N}\in I_\delta-\mathcal N_N\}$ and $J_{3,N}=\{j\in\Z\,|\,\mu^j_{t_0-s_N}\in I-I_\delta\}$. We write explicitly $J_{1,N}=\{j_{-},...,j_{+}-1\}$ for after since we shall have to control the size of $J_{1,N}$.
\newline
As in the heuristic of the proof, the starting point is the relation \eqref{Formula p'/p}: 
\begin{equation}
\label{master eq}
\begin{split}
0&=\cfrac{1}{2N}\sum_{j=1}^{2N}\cotan\left(\cfrac{\mu_{t_0}^{i_0(N)}-\mu_{t_0-s_N}^j}{2}\right)\\
&=\cfrac{1}{2N}\sum_{j\in J_{1,N}}\cotan\left(\cfrac{\mu_{t_0}^{i_0(N)}-\mu_{t_0-s_N}^j}{2}\right)+\cfrac{1}{2N}\sum_{j\in J_{2,N}}\cotan\left(\cfrac{\mu_{t_0}^{i_0(N)}-\mu_{t_0-s_N}^j}{2}\right)\\
&\hspace{7cm}+\cfrac{1}{2N}\sum_{j\in J_{3,N}}\cotan\left(\cfrac{\mu_{t_0}^{i_0(N)}-\mu_{t_0-s_N}^j}{2}\right)\\
&=:S_{1,N}+S_{2,N}+S_{3,N},
\end{split}
\end{equation}
where $S_{1,N}$ corresponds to the interaction between $\mu_{t_0}^{i_0(N)}$ and the particles that are very near it at time $t_0-s_N$, $S_{2,N}$ corresponds to the interaction between $\mu_{t_0}^{i_0(N)}$ and the particles that are near it at time $t_0-s_N$ and $S_{3,N}$ corresponds to the interaction between $\mu_{t_0}^{i_0(N)}$ and the particles that are far away it at time $t_0-s_N$.
We first deal with the near and the far away part.
\subsection{Near part}
\label{subsection:nearpart}
\begin{proposition}
\label{prop: convergence nearpart}
We have the following inequality:
\begin{equation}
\underset{N}{\liminf}\,S_{2,N}\ge 2\pi \left[I_{1,\delta}[\phi(t_0,.)](\theta_0)-\cotan\left(\cfrac{\delta}{2}\right)\left(\phi(t_0,\theta_0+\delta)+\phi(t_0,\theta_0-\delta)\right)\right].
\end{equation}
\end{proposition}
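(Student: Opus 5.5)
The plan is to rewrite $S_{2,N}$ as a Stieltjes integral against the empirical distribution function of the modified particles at time $t_0-s_N$. I then integrate by parts so that the distribution function, rather than its derivative, appears under the kernel. Finally I compare this distribution function with $\phi$ using the one-sided information $F^*\le\phi$ near $(t_0,\theta_0)$. Set $y_N:=\mu^{i_0(N)}_{t_0}$ and $G_N(x):=\frac{1}{2N}\#\{j:\mu^j_{t_0-s_N}\le x\}$, normalized by $G_N(y_N)$; write $H_N:=G_N-G_N(y_N)$. Then
\[
S_{2,N}=\int_{I_\delta\setminus\mathcal N_N}\cotan\Big(\frac{y_N-x}{2}\Big)\,dH_N(x).
\]
Integrating by parts on each of the two intervals $(\theta_0-\delta,y_N-N^{-1/2})$ and $(y_N+N^{-1/2},\theta_0+\delta)$ produces boundary terms and the main term
\[
-\frac12\int_{I_\delta\setminus\mathcal N_N}\frac{H_N(x)}{\sin^2\big((y_N-x)/2\big)}\,dx .
\]
The sign is favourable here: $x\mapsto\cotan((y_N-x)/2)$ is increasing, so an \emph{upper} bound on $H_N$ yields a \emph{lower} bound on $S_{2,N}$.

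The key comparison is $H_N(x)\le \phi(t_0,x)-\phi(t_0,\theta_0)+o(1)$, uniformly on $I_\delta$. It comes from two facts.
\begin{itemize}
\item On $I_\delta$ the modified particles are $\min(\gamma^j,\lambda^j)$, which are controlled by the $\gamma^j$ built from $\phi(t_0-s_N,\cdot)$. Together with $\limsup_N F_{2N}\le F^*\le\phi$ and $s_N\to0$, this gives $G_N\le\phi(t_0,\cdot)+o(1)$ locally uniformly.
\item By the choice of $i_0(N)$ and the monotonicity of the indices under Proposition \ref{prop:discrete comp principle}, $G_N(y_N)=\frac{i_0(N)}{2N}+O(1/N)\to F^*(t_0,\theta_0)=\phi(t_0,\theta_0)$. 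Moreover $y_N\to\theta_0$, using the appendix bound \eqref{eq:boundexpected} with $s_N=1/(2N)$.
\end{itemize}
Inserting this bound and symmetrizing $x=y_N\pm z$, the main term is bounded below by
\[
\frac12\int_{N^{-1/2}<|z|<\delta}\frac{2\phi(t_0,\theta_0)-\phi(t_0,\theta_0+z)-\phi(t_0,\theta_0-z)}{\sin^2(z/2)}\,dz+o(1).
\]
Because $\phi\in C^{1,1}$, this integrand is bounded, so dominated convergence lets the excised region shrink and gives $2\pi\, I_{1,\delta}[\phi(t_0,\cdot)](\theta_0)$ in the limit, with the $\frac{1}{8\pi}$ normalization. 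The outer boundary terms at $\theta_0\pm\delta$ are $\cotan(\mp\delta/2)H_N(\theta_0\pm\delta)$. Bounding $H_N$ again by $\phi(t_0,\theta_0\pm\delta)$, up to the constant $\phi(t_0,\theta_0)$ and $o(1)$, which I expect to be absorbed in or dropped from the crude form stated, gives the $-\cotan(\delta/2)\big(\phi(t_0,\theta_0+\delta)+\phi(t_0,\theta_0-\delta)\big)$ term.

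The main obstacle is the two inner boundary terms at $y_N\pm N^{-1/2}$. There the kernel is of size $\mp 2\sqrt N$, while $H_N(y_N\pm N^{-1/2})$ is only of size $N^{-1/2}$, so each term is $O(1)$ and does not vanish automatically. I would try to show that the two terms combine to a quantity with a favourable sign, or tending to $0$. The natural identity is
\[
-\cotan\Big(\frac{1}{2\sqrt N}\Big)\Big[H_N\big(y_N+N^{-1/2}\big)+H_N\big(y_N-N^{-1/2}\big)\Big],
\]
which is a symmetric second difference of $G_N$ at scale $N^{-1/2}$. For this I need local regularity of the particle counts near $y_N$: the number of particles in $\mathcal N_N$ should be $2N\,\partial_\theta\phi(t_0,\theta_0)N^{-1/2}(1+o(1))$, symmetrically distributed. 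This uses $(H_m)$, the touching of $F^*$ and $\phi$ at $\theta_0$, and the same counting estimate on $|J_{1,N}|$ that the very-near part $S_{1,N}$ analysis requires. If symmetry cannot be obtained, I would use only one-sided bounds and accept an error term that is $o(1)$ after letting $N\to\infty$ before $\delta\to0$.
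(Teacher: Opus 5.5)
\textbf{There is a genuine gap: your key comparison is not precise enough, and this already breaks the main term, not only the inner boundary terms.}

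\emph{The main term.} On $I_\delta\setminus\mathcal N_N$ the weight $\frac12\sin^{-2}(z/2)$ has total mass $\approx 4\sqrt N$. Inserting $H_N\le \phi(t_0,\cdot)-\phi(t_0,\theta_0)+\eta_N$ with a merely qualitative $\eta_N\to0$ therefore costs about $4\sqrt N\,\eta_N$, not $o(1)$.

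\emph{The centring.} The kernel is centred at $y_N$, so the honest symmetrized numerator is $2\phi(t_0,\theta_0)-\phi(t_0,y_N+z)-\phi(t_0,y_N-z)$. This differs from a genuine second difference by $2(\phi(t_0,\theta_0)-\phi(t_0,y_N))$, which is again multiplied by $\asymp\sqrt N$. The choice of $i_0(N)$ gives no rate such as $y_N-\theta_0=o(N^{-1/2})$.

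\emph{Why separate estimates cannot work.} Since $\int k\,dH_N$ is unchanged under $H_N\mapsto H_N+c$, such $\sqrt N$-size pieces are exactly what the inner boundary terms compensate when one function is used throughout. Bounding the main and boundary terms separately with one-sided $o(1)$ information destroys this compensation. Letting $N\to\infty$ before $\delta\to0$ does not help either: these errors are produced at scale $N^{-1/2}$ around $y_N$ and do not depend on $\delta$.

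\emph{What is needed.} The input $\limsup F_{2N}\le F^*\le\phi$ is qualitative. What you need is a comparison of $H_N$ with $\Phi_N:=\phi(t_0-s_N,\cdot)-\phi(t_0-s_N,y_N)$ accurate to $o(N^{-1/2})$ near $y_N$, for instance $|H_N-\Phi_N|=O(1/N)$ on $I_\delta$. Incidentally, the inner boundary contribution is $+\cotan(\frac{1}{2\sqrt N})\,[H_N(y_N+N^{-1/2})+H_N(y_N-N^{-1/2})]$, with a plus sign.

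\textbf{How the paper gets this precision.} For $j\in J_{2,N}$ it writes $\mu^j_{t_0-s_N}=\phi(t_0-s_N,\cdot)^{-1}(j/2N)$. That is, it takes the minimum defining the modified particles to be attained by $\gamma^j$, so the empirical distribution follows $\phi(t_0-s_N,\cdot)$ up to $O(1/N)$. It then never integrates the empirical measure by parts:
\begin{itemize}
\item Monotonicity on each cell gives $S_{2,N}\ge\int\cotan(\frac{y_N-x}{2})\,\partial_x\phi(t_0-s_N,x)\,dx$ over $I_\delta\setminus\mathcal N_N$, with edge effects of order $O(N^{-1/2})$.
\item Since $\mathcal N_N$ is symmetric about $y_N$, subtracting $\partial_x\phi(t_0-s_N,y_N)$ shows the principal value over $\mathcal N_N$ is $O(N^{-1/2})$, by the Lipschitz bound on $\partial_x\phi$.
\item One passes to the limit in the principal value over $I_\delta$, and only then integrates by parts the smooth function $\phi(t_0,\cdot)$.
\end{itemize}

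\textbf{Repairing your route.} Your Stieltjes approach can be fixed the same way: integrate by parts $H_N-\Phi_N$ rather than $H_N$, and every resulting term is $O(N^{-1/2})$. But it needs exactly this quantile input.

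\textbf{The dropped constant.} You cannot simply drop $2\cotan(\delta/2)\phi(t_0,\theta_0)$ from the outer boundary terms. An honest integration by parts (the paper's included) produces $\phi(t_0,\theta_0\pm\delta)-\phi(t_0,\theta_0)$ there. The constant disappears only after combining with the far part in Proposition~\ref{prop:interaction particles}.
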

\begin{proof}
We start from the definition of the $\mu_{t_0-s_N}^j$: $$S_{2,N}=\cfrac{1}{2N}\sum_{j\in J_{2,N}}\cotan\left(\cfrac{\mu_{t_0}^{i_0(N)}-\mu_{t_0-s_N}^j}{2}\right)=\cfrac{1}{2N}\sum_{j\in J_{2,N}}\cotan\left(\cfrac{\mu_{t_0}^{i_0(N)}-(\phi(t_0-s_N))^{-1}(\frac{j}{2N})}{2}\right).$$
By monotonicity of $\phi(t_0-s_N,.)$, we get: 
\begin{align*}
\sum_{j\in J_{2,N}}\int_{\frac{j-1}{2N}}^{\frac{j}{2N}}\cotan\left(\cfrac{\mu_{t_0}^{i_0(N)}-(\phi(t_0-s_N))^{-1}(y)}{2}\right)dy\le S_{2,N}
\end{align*}
The change of variable $y=\phi(t_0-s_N,x)$ yields: 
\begin{equation}
\label{encadrement}
\begin{split}
\sum_{j\in J_{2,N}}\int_{\mu_{t_0-s_N}^{\frac{j-1}{2N}}}^{\mu_{t_0-s_N}^{\frac{j}{2N}}}\cotan\left(\cfrac{\mu_{t_0}^{i_0(N)}-x}{2}\right)\partial_x\phi(t_0-s_N,x)dx\le S_{2,N}
\end{split}
\end{equation}
Since
\begin{align*}
&\left|P.V. \int_{\mathcal N_N}\partial_x\phi(t_0-s_N,x)\cotan\left(\cfrac{\mu_{t_0}^{i_0(N)}-x}{2}\right)dx\right|=\\
&\left|\int_{\mathcal N_N}\left[\partial_x\phi(t_0-s_N,x)-\partial_x\phi(t_0-s_N,\mu_{t_0}^{i_0(N)})\right]\cotan\left(\cfrac{\mu_{t_0}^{i_0(N)}-x}{2}\right)dx\right|\\
&\lesssim \cfrac{\sup_{t\in[(t_0-1)_+,t_0+1]}||\partial_{xx}^2\phi(t,.)||_\infty}{\sqrt{N}}
\end{align*}
goes to 0 when $N$ goes to $+\infty$, taking the limit in $\eqref{encadrement}$ gives: 
$$\underset{N}{\liminf}\, S_{2,N}\ge P.V.\int_{I_\delta}\partial_x\phi(t_0,x)\cotan\left(\cfrac{\theta_0-x}{2}\right)dx =2\pi H_{1,\delta}[\partial_x\phi(t_0,.)](\theta_0).$$
Integrating by part yields the result.
\end{proof}
\subsection{Far away part}
By similar computations as in Section \ref{subsection:nearpart} we get the following result for the far away term.
\begin{proposition}
\label{prop: convergence farpart}
We have the following inequality:
\begin{equation}
\liminf_N S_{3,N}\ge 2\pi \left[I_{2,\delta}[F^*(t_0,.)](\theta_0)+\cotan\left(\cfrac{\delta}{2}\right)(F^*(t_0,\theta_0+\delta)+F^*(t_0,\theta_0-\delta))\right].
\end{equation}
\end{proposition}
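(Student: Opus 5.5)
We follow the proof of Proposition \ref{prop: convergence nearpart}, now on the far index set $J_{3,N}$. There $|\mu_{t_0}^{i_0(N)}-\mu_{t_0-s_N}^j|\ge\delta/2$ for $N$ large, so the kernel $\cotan\big((\mu_{t_0}^{i_0(N)}-x)/2\big)$ is smooth and bounded on the relevant set. We rewrite $S_{3,N}$ as a Stieltjes integral against the empirical cumulative distribution function:
$$S_{3,N}=\int_{I\setminus I_\delta}\cotan\Big(\frac{\mu_{t_0}^{i_0(N)}-x}{2}\Big)\,d G_N(x),$$
where $G_N$ is the cumulative distribution function of the empirical measure of $(\mu^j_{t_0-s_N})_j$. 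On $I\setminus I_\delta$ this measure coincides with that of the $\lambda^j_{t_0-s_N}$, because the particles are modified only for indices with $\frac{j}{2N}\in J_\delta$ and those are placed inside $I_\delta$. Hence $G_N=F_{2N}(t_0-s_N,\cdot)$ there, up to an additive constant that will cancel.

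We integrate by parts on the two arcs $[\theta_0+\delta,\theta_0+2\pi-\delta]$, identified through periodicity. The derivative of the kernel in $x$ is $\frac{1}{2\sin^2((\mu_{t_0}^{i_0(N)}-x)/2)}$. This gives
$$S_{3,N}=\Big[\cotan\Big(\frac{\mu^{i_0(N)}_{t_0}-x}{2}\Big)F_{2N}(t_0-s_N,x)\Big]_{\text{endpoints}}-\int_{I\setminus I_\delta}\frac{F_{2N}(t_0-s_N,x)}{2\sin^2\big((\mu^{i_0(N)}_{t_0}-x)/2\big)}\,dx.$$
The boundary terms produce $\cotan(\delta/2)\big(F_{2N}(\cdot,\theta_0+\delta)+F_{2N}(\cdot,\theta_0-\delta)\big)$ with the signs displayed in the statement, modulo the $2\pi$ shift ($+1$). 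The integral term, combined with the constant $F_{2N}(t_0-s_N,\theta_0)$, reassembles into $2\pi I_{2,\delta}$ after symmetrizing $x=\theta_0\pm\theta'$. The constant part integrates exactly against the boundary contribution. This rests on the identity $\int_{\delta}^{\pi}\frac{d\theta'}{\sin^2(\theta'/2)}=2\cotan(\delta/2)$, and it is what fixes the $\cotan(\delta/2)$ terms.

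To pass to the limit we need three facts:
\begin{itemize}
\item $\mu_{t_0}^{i_0(N)}\to\theta_0$, using \eqref{eq:boundexpected} with $s_N=\frac{1}{2N}$ and the choice of $i_0(N)$;
\item the kernels converge uniformly on $I\setminus I_\delta$;
\item $F_{2N}(t_0-s_N,x)$ is controlled through $\limsup F_{2N}(t_0-s_N,x_N)\le F^*(t_0,x)$ for every $x_N\to x$, by the definition of $F^*$.
\end{itemize}
The integral term enters $I_{2,\delta}$ with a \emph{minus} sign on $F(\theta_0\pm\theta')$. Fatou's lemma, applied to the uniformly bounded functions $-F_{2N}$ (bounded thanks to $\mathcal F_{2\pi}$ and sublinearity), gives $\liminf$ of that part $\ge$ the same expression with $F^*$. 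The boundary terms enter with a plus sign in the stated inequality, so they are handled directly by the pointwise $\limsup$ bound.

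The delicate step is the constant term $F_{2N}(t_0-s_N,\theta_0)$ (or $\frac{i_0(N)}{2N}$). Its sign in the $\liminf$ goes the wrong way for a plain $\limsup$ bound. We resolve this by choosing the constant as $\frac{i_0(N)}{2N}$, which is legitimate because an additive constant drops out of $S_{3,N}$ after the cancellation. Since $\frac{i_0(N)}{2N}\to F^*(t_0,\theta_0)$ exactly, the term converges instead of merely being bounded.

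We expect the main obstacle to be this sign bookkeeping: after the cancellation, every occurrence of $F_{2N}$ must appear with the sign that the upper semicontinuous limit $F^*$ can handle. We would first check it by writing the combined coefficient of each $F_{2N}(x)$ explicitly. If some term has the wrong sign, we would use the monotonicity of $F_{2N}$ and shrink $\delta$ slightly ($\delta'\downarrow\delta$) to absorb the jumps at the endpoints.
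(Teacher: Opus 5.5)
You follow the route the paper's one-line proof points to (Stieltjes integral against the empirical distribution function, integration by parts on the far arc, then the upper envelope $F^*$), but the step you call direct is where it fails. Recentre the kernel at $\theta_0$; this costs $o(1)$ since $\mu^{i_0(N)}_{t_0}\to\theta_0$. Let $G$ be a distribution function with $G(\cdot+2\pi)=G+1$ describing the particles on the far arc. Up to atoms at the endpoints, integration by parts gives
\[
\int_{(\theta_0+\delta,\,\theta_0+2\pi-\delta)}\cotan\Big(\frac{\theta_0-x}{2}\Big)\,dG(x)=\cotan\Big(\frac{\delta}{2}\Big)\big(G(\theta_0+\delta)+G(\theta_0-\delta)\big)-\int_\delta^\pi\frac{G(\theta_0+\theta')+G(\theta_0-\theta')}{2\sin^2(\theta'/2)}\,d\theta',
\]
that is, $2\pi I_{2,\delta}[G](\theta_0)+\cotan(\delta/2)\big(G(\theta_0+\delta)+G(\theta_0-\delta)-2G(\theta_0)\big)$. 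In the last integral $G$ has a negative coefficient, so $\limsup_N F_{2N}(t_0-s_N,x)\le F^*(t_0,x)$ and reverse Fatou work, as you say. The endpoint values, however, enter with a \emph{positive} coefficient. Bounding their $\liminf$ from below would need $\liminf_N F_{2N}(t_0-s_N,\theta_0\pm\delta)\ge F^*(t_0,\theta_0\pm\delta)$, which is the opposite of what an upper relaxed limit gives. Monotonicity and moving $\delta$ do not help: they only compare $F_{2N}$ with its own values elsewhere, which are again controlled only from above. Even the lower envelope would give $F_*$, not $F^*$, and $F_*=F^*$ is what the whole scheme is meant to prove. Concretely, suppose a mass $a$ of $\mu_{2N}(t_0-s_N)$ sits at distance $o(1)$ to the right of $\theta_0+\delta$. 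It is counted in $F^*(t_0,\theta_0+\delta)$, since a usc non-decreasing function is right-continuous, but not in $F_{2N}(t_0-s_N,\theta_0+\delta)$. So putting $F^*$ in the endpoint term adds at least $a\cotan(\delta/2)$, while this mass actually contributes about $-a\cotan(\delta/2)$ to $S_{3,N}$. The endpoint terms must either be kept as $\liminf_N$ of the empirical distribution function at $\theta_0\pm\delta$, or be cancelled against the near part; the paper only uses the sum, in Proposition \ref{prop:interaction particles}.

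There are two further problems in your bookkeeping.

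First, the constant does not drop out. The identity keeps $-2\cotan(\delta/2)G(\theta_0)$, which with your normalization $c_N=i_0(N)/(2N)$ tends to $-2\cotan(\delta/2)F^*(t_0,\theta_0)$. Also, the endpoint coefficient is $\cotan(\delta/2)$, not $2\pi\cotan(\delta/2)$. So even granting the endpoints, you do not land on the displayed right-hand side. In fact that right-hand side cannot hold as written. Replacing $\theta_0$ by $\theta_0+2\pi$ (same $\phi$, $i_0(N)+2N$) leaves $S_{3,N}$ unchanged but raises the right-hand side by $4\pi\cotan(\delta/2)$. Iterating this contradicts $|S_{3,N}|\lesssim\cotan(\delta/2)$. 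Only after adding Proposition \ref{prop: convergence nearpart} do these shifts cancel: the opposite shift occurs there, and $+2\cotan(\delta/2)\phi(t_0,\theta_0)$ appears with $\phi(t_0,\theta_0)=F^*(t_0,\theta_0)$.

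Second, the modified particles do not coincide with the $\lambda^j_{t_0-s_N}$ on $I\setminus I_\delta$. Take an index with $j/(2N)\in J_\delta(t_0-s_N)$ whose $\lambda^j_{t_0-s_N}$ lies to the right of $\theta_0+\delta$. It is moved to $\gamma^j_{t_0-s_N}\in I_\delta$, and as soon as $F^*(t_0,\theta_0+\delta)<\phi(t_0,\theta_0+\delta)$ there are of order $N$ such indices. On the right part of the far arc, the relevant distribution function is therefore essentially $\max\big(F_{2N}(t_0-s_N,\cdot),\phi(t_0-s_N,\theta_0+\delta)\big)$. Its upper envelope is $\max\big(F^*(t_0,\cdot),\phi(t_0,\theta_0+\delta)\big)$ rather than $F^*(t_0,\cdot)$.
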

\subsection{Very near part}
\begin{proposition}
\label{prop: very near part}
We have the following convergence:
\begin{equation}
\label{equation: lim sup S_1start}
\lim_N S_{1,N}= 2\pi\partial_\theta \phi(t_0,\theta_0)\cotan(\pi L(t_0,\theta_0)\partial_\theta \phi(t_0,\theta_0))
\end{equation}
\end{proposition}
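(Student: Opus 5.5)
We need to show that $S_{1,N}$, the sum over the very near particles, converges to $2\pi\partial_\theta\phi\,\cotan(\pi L\partial_\theta\phi)$. The idea is to reproduce rigorously the Euler-identity heuristic of Section \ref{section: heuristic pde}, using the explicit structure of the very near particles.

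\textbf{Step 1: the very near particles form an almost exact lattice.} For indices $j\in J_{1,N}$ the particle $\mu^j_{t_0-s_N}$ lies in $\mathcal N_N\subset I_\delta$. For $N$ large, and at least on the relevant indices, these positions are given by $\gamma^j_{t_0-s_N}=(\phi(t_0-s_N,\cdot))^{-1}(j/2N)$. Since $\phi\in C^{1,1}$ with $\partial_\theta\phi(t_0,\theta_0)=:p>m>0$, a Taylor expansion of the inverse map gives
\[\mu^j_{t_0-s_N}=\mu^{i_0(N)}_{t_0-s_N}+\frac{j-i_0(N)}{2Np}\bigl(1+O(N^{-1/2})+o(1)\bigr)\]
uniformly on $J_{1,N}$. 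The width of $\mathcal N_N$ is $2N^{-1/2}$, so $|J_{1,N}|\sim 2pN^{1/2}$. A point to check here is that the minimum with $\lambda^j$ in the definition of $\mu$ does not destroy this structure near $i_0(N)$. I would argue that, at the touching point, $F^*\le\phi$ forces $\gamma^j\le\lambda^j$ asymptotically. If this fails, I would instead use the bound $\mu^j\le\gamma^j$, which suffices for a one-sided inequality.

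\textbf{Step 2: reduce to the Euler series.} Write $d_N:=\mu^{i_0(N)}_{t_0}-\mu^{i_0(N)}_{t_0-s_N}$. By definition of $L$ with $s_N=1/2N$, we have $2Nd_N\to L$. For $j\in J_{1,N}$ the argument of $\cotan$ is small, so $\cotan(x/2)=2/x+O(x)$. Summing the $O(x)$ error over $J_{1,N}$ gives at most $N^{-1}\cdot N^{1/2}\cdot N^{-1/2}\to 0$. Setting $k=j-i_0(N)$, we get
\[S_{1,N}\approx \frac{1}{N}\sum_{k}\frac{1}{d_N-\frac{k}{2Np}(1+o(1))}=2p\sum_{|k|\lesssim pN^{1/2}}\frac{1}{2Npd_N-k+o(k)}.\]
Here $2Npd_N\to pL=:a$. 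The Lemma \ref{lemma: ecart racine proche derive un espace} bound, together with the interlacing $\mu^{i_0}_{t_0}\in(\mu^{i_0}_{t_0-s_N},\mu^{i_0+1}_{t_0-s_N})$, gives $a\in[0,1]$. The symmetric truncated sum $\sum_{|k|\le K}(a-k)^{-1}$ converges to $\pi\cotan(\pi a)$ by the Euler identity. This yields the limit $2\pi p\cotan(\pi pL)$.

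\textbf{Main obstacle.} The hard part is controlling the relative errors $o(k)$ in the denominators uniformly over $|k|\le pN^{1/2}$. The sum $\sum 1/k$ is only conditionally convergent, so the perturbation of the pairs $\pm k$ must cancel. First I would compare each term with its unperturbed version. Since $\phi\in C^{1,1}$, the perturbation is $O(k^2/(N p^2))$ in the denominator, not merely $o(k)$. The difference of terms is then $O\big((k^2/N)/k^2\big)=O(1/N)$ per term, which sums to $O(N^{-1/2})\to0$ over $N^{1/2}$ terms. This only requires the second-order Taylor control coming from the Lipschitz bound on $\partial_\theta\phi$. The asymmetry of $J_{1,N}$ around $i_0(N)$ contributes at most $O(1)$ extra indices at distance $\sim N^{1/2}$, hence a negligible amount.

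Two degenerate cases remain. If $a=0$ or $a=1$, a denominator blows up. I would handle this by the convention $\cotan(0)=+\infty$, which is consistent with the inequality \eqref{eq:isolating1} being used only one-sidedly, or exclude it via the strict interlacing together with the lower bound from Lemma \ref{lemma: ecart racine proche derive un espace}.
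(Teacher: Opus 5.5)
Your proposal is correct and follows essentially the paper's proof. You replace $\cotan(x/2)$ by $2/x$ at cost $O(N^{-1})$, compare term by term with the lattice $\tilde\mu^j_{t_0-s_N}$ using the second-order bound $|\mu^j_{t_0-s_N}-\tilde\mu^j_{t_0-s_N}|\lesssim (|k|+k^2)/N^2$ (the paper's $\tilde S_{1,1,N}$), discard the $O(1)$ asymmetric indices and the paired Euler tail (the paper's $\tilde S_{1,2,N}$), and conclude with the Euler identity. Like you, the paper simply takes $\mu^j_{t_0-s_N}=\phi(t_0-s_N,\cdot)^{-1}(j/2N)$ on $I_\delta$ and does not address the minimum with $\lambda^j$.

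Two small adjustments are worth making:
\begin{enumerate}
\item Anchor the lattice at the local slope $\partial_\theta\phi(t_0-s_N,\mu^{i_0(N)}_{t_0-s_N})$, as the paper does. With the fixed slope $\partial_\theta\phi(t_0,\theta_0)$ the relative error is only $o(1)$, not $O(k^2/N)$, and unpaired errors of size $o(1)/|k|$ sum to $o(1)\log N$.
\item Drop the $\cotan(0)=+\infty$ convention and rely on Lemma \ref{lemma: ecart racine proche derive un espace}. It keeps $2N\partial_\theta\phi\,(\mu^{i_0(N)}_{t_0}-\mu^{i_0(N)}_{t_0-s_N})$ in a compact subset of $(0,1)$. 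That is needed both for the $k=1$ comparison term and for passing to the limit inside $\cotan$.
\end{enumerate}
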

The proof is quite technical and we give it in Appendix \ref{sectionproofofprop}.

\subsection{Proof of Proposition \ref{prop: very near part}}
\label{sectionproofofprop}
\subsubsection{General Lemmas}
\label{subsection lemma}
This part must be considered as independent of the others (nonetheless, the notations have been chosen consistently with the proof). We fix an integer $N>0$, $\delta>0$ and $\theta_0\in\R$ and we define $I_\delta=]\theta_0-\delta,\theta_0+\delta[$. Let $\phi:\R\to \R$ be a smooth function and let $\psi=\phi'$. Let $K_\delta=\inf_{x\in I_\delta}\psi(x)$. In all this part we suppose that  $K_\delta>0$.
\newline
We define $J_\delta=\phi(I_\delta)$ and $E_{\delta,N}=\{i\in\Z\,|\, \frac{i}{2N}\in J_\delta\}$. We suppose that $E_{\delta,N}$ is not empty (this hypothesis is not really important since for $N$ large enough it is the case because of the fact that $\leb(J_\delta)>0$ since $K_\delta>0$).
\newline
Since $\phi$ is strictly increasing on $I_\delta$ we can define $\mu^i=\phi^{-1}(\frac{i}{2N})$ for $i\in E_{\delta,N}$.
Finally, we fix $i_0\in E_{\delta,N}$. 

\begin{lemma}
\label{lemma: ecart bound 1}
For all $j\in E_{\delta,N}$, we have the following bounds:
\begin{equation}
\label{lemma:bound 1}
|\mu^j-\mu^{i_0}|\ge \cfrac{1}{||\phi'||_{L^\infty(I_\delta)}}\,\cfrac{|i_0-j|}{2N}
\end{equation}
\begin{equation}
\label{lemma:bound 2}
|\mu^j-\mu^{i_0}|\le\cfrac{1}{K_\delta}\,\cfrac{|i_0-j|}{2N}
\end{equation}
\end{lemma}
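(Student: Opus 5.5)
The plan is to apply the mean value theorem to $\phi$ on the segment between $\mu^j$ and $\mu^{i_0}$. Both points lie in $I_\delta$, and $I_\delta$ is an interval, so this segment is contained in $I_\delta$.

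By definition, $\phi(\mu^i)=\frac{i}{2N}$ for every $i\in E_{\delta,N}$. Hence
\[
\frac{|j-i_0|}{2N}=|\phi(\mu^j)-\phi(\mu^{i_0})|=\psi(\xi)\,|\mu^j-\mu^{i_0}|
\]
for some $\xi$ between $\mu^j$ and $\mu^{i_0}$, and in particular $\xi\in I_\delta$.

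Since $\xi\in I_\delta$, the derivative $\psi(\xi)=\phi'(\xi)$ satisfies
\[
0<K_\delta\le \psi(\xi)\le \|\phi'\|_{L^\infty(I_\delta)}.
\]
Here the positivity is the standing assumption $K_\delta>0$, which also gives $\|\phi'\|_{L^\infty(I_\delta)}>0$ so that the quotient below makes sense.

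Dividing the identity by $\psi(\xi)$ gives
\[
|\mu^j-\mu^{i_0}|=\frac{1}{\psi(\xi)}\,\frac{|j-i_0|}{2N}.
\]
Using the upper bound on $\psi(\xi)$ yields \eqref{lemma:bound 1}, and using the lower bound $K_\delta$ yields \eqref{lemma:bound 2}. The case $j=i_0$ is trivial.

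There is no real obstacle here. The only points to check are that $\mu^j$ and $\mu^{i_0}$ both lie in $I_\delta$, which holds because $\frac{j}{2N},\frac{i_0}{2N}\in J_\delta=\phi(I_\delta)$ and $\phi$ is injective on $I_\delta$, and that this interval is convex.
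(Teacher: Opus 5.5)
Your proof is correct and is essentially the paper's argument. The paper writes $\frac{|i_0-j|}{2N}=\left|\int_{\mu^j}^{\mu^{i_0}}\phi'(x)\,dx\right|$ and bounds the integral between $K_\delta|\mu^j-\mu^{i_0}|$ and $\|\phi'\|_{L^\infty(I_\delta)}|\mu^j-\mu^{i_0}|$; you apply the mean value theorem to the same identity, which is an equivalent step.
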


\begin{proof}
By definition, we have the following equality: $$\left|\int_{\mu^j}^{\mu^{i_0}}\phi'(x)dx\right|=\cfrac{|i_0-j|}{2N}.$$
Since $$K_\delta|\mu^j-\mu^{i_0}|\le \left|\int_{\mu^j}^{\mu^{i_0}}\phi'(x)dx\right|\le ||\phi'||_{L^\infty(I_\delta)}|\mu^j-\mu^{i_0}|,$$
we get the result.
\end{proof}

For all $j\in E_{\delta,N}$, we define $\tilde\mu^j=\mu^{i_0}+\cfrac{j-i_0}{2N\psi(\mu^{i_0})}$.
\begin{lemma}
\label{lemma: ecart bound 2}
For all $j\in E_{\delta,N}$, we have the following bounds:
\begin{equation}
\label{lemma:bound 3}
|\mu^j-\tilde\mu^j|\le \cfrac{||\phi''||_{L^{\infty}(I_\delta)}}{K_\delta^3}\,\cfrac{|i_0-j|^2}{8N^2}
\end{equation}
\begin{equation}
\label{lemma:Errorterm}
|V_j|:=\left|\mu^{j+1}-\mu^j-\cfrac{1}{2N\psi(\mu^j)}\right|\le\cfrac{||\phi''||_{L^{\infty}(I_\delta)}}{K_\delta^3}\,\cfrac{1}{8N^2}
\end{equation}
\begin{equation}
\label{lemma:bound 4}
|\mu^j-\tilde\mu^j|\le \cfrac{||\phi''||_{L^{\infty}(I_\delta)}}{K_\delta^3}\,\cfrac{|j-i_0|}{8N^2}+\cfrac{||\phi''||_{L^{\infty}(I_\delta)}}{8N^2 K_\delta^3}\,|i_0-j|^2
\end{equation}
\end{lemma}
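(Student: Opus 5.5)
The plan is to work with the inverse function $g:=\phi^{-1}$, defined on the interval $J_\delta=\phi(I_\delta)$. Since $\psi=\phi'\ge K_\delta>0$ on $I_\delta$, $\phi$ is a smooth strictly increasing bijection $I_\delta\to J_\delta$, so $g$ is smooth there. Differentiating $\phi(g(y))=y$ twice gives
$$g'(y)=\cfrac{1}{\psi(g(y))},\qquad g''(y)=-\cfrac{\phi''(g(y))}{\psi(g(y))^3},$$
and hence, uniformly on $J_\delta$,
$$|g''|\le \cfrac{||\phi''||_{L^\infty(I_\delta)}}{K_\delta^3}.$$
By definition $\mu^j=g(\frac{j}{2N})$ for every $j\in E_{\delta,N}$. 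All three bounds will then be second-order Taylor expansions of $g$ between points $\frac{k}{2N}$ of $J_\delta$.

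For \eqref{lemma:bound 3}, expand $g$ around $y_0=\frac{i_0}{2N}$ and evaluate at $y=\frac{j}{2N}$. Since $J_\delta$ is an interval containing both points, the whole segment lies in $J_\delta$, so Taylor's formula with Lagrange remainder applies. The first-order part is
$$g(y_0)+g'(y_0)\,\cfrac{j-i_0}{2N}=\mu^{i_0}+\cfrac{j-i_0}{2N\psi(\mu^{i_0})}=\tilde\mu^j .$$
The remainder is bounded by
$$\tfrac12\sup|g''|\left(\cfrac{j-i_0}{2N}\right)^2\le \cfrac{||\phi''||_{L^\infty(I_\delta)}}{K_\delta^3}\,\cfrac{|i_0-j|^2}{8N^2},$$
which is exactly \eqref{lemma:bound 3}.

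For \eqref{lemma:Errorterm}, apply the same expansion around $\frac{j}{2N}$, evaluated at $\frac{j+1}{2N}$. This gives $\mu^{j+1}-\mu^j=\frac{1}{2N\psi(\mu^j)}+R$ with
$$|R|\le \tfrac12\sup|g''|\,\cfrac{1}{4N^2}\le\cfrac{||\phi''||_{L^\infty(I_\delta)}}{K_\delta^3}\,\cfrac{1}{8N^2}.$$
Here I implicitly need $j+1\in E_{\delta,N}$ (or at least $\frac{j+1}{2N}\in J_\delta$) so that $\mu^{j+1}$ is defined by $g$. I would state this as the natural range of validity of the statement, i.e. $j,j+1\in E_{\delta,N}$.

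Finally, \eqref{lemma:bound 4} follows immediately from \eqref{lemma:bound 3}, since its right-hand side is that of \eqref{lemma:bound 3} plus a nonnegative term. Alternatively, and closer in spirit to how it will be used, I could telescope
$$\mu^j-\mu^{i_0}=\sum_k\left(\frac{1}{2N\psi(\mu^k)}+V_k\right).$$
The $V_k$ sum is controlled by \eqref{lemma:Errorterm}, giving the linear term $|j-i_0|/(8N^2)$. The differences $\frac{1}{\psi(\mu^k)}-\frac{1}{\psi(\mu^{i_0})}$ are controlled via \eqref{lemma:bound 2} and $|(1/\psi)'|\le ||\phi''||/K_\delta^2$, giving the quadratic term. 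This route yields the same form up to constants, so I would simply invoke \eqref{lemma:bound 3}.

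I do not expect a genuine obstacle. The only care needed is the domain bookkeeping: that the Taylor segments stay in $J_\delta$ (which holds because it is an interval) and that the indices $j+1$ used in $V_j$ belong to $E_{\delta,N}$.
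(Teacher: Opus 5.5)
Your proof is correct and gives exactly the paper's constants, but the route is slightly different.

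\textbf{First and $V_j$ estimates.} The paper works on the $x$-side. It writes
$\tilde\mu^j-\mu^j=\psi(\mu^{i_0})^{-1}\int_{\mu^{i_0}}^{\mu^j}\bigl(\psi(x)-\psi(\mu^{i_0})\bigr)\,dx$
and bounds this by $\frac{\|\phi''\|_{L^\infty(I_\delta)}}{2K_\delta}(\mu^j-\mu^{i_0})^2$. It then converts the spatial gap into an index gap with the Lipschitz bound $|\mu^j-\mu^{i_0}|\le |i_0-j|/(2NK_\delta)$ from Lemma \ref{lemma: ecart bound 1}. In that route the factor $K_\delta^{-3}$ arises as $K_\delta^{-1}\cdot K_\delta^{-2}$.

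You instead Taylor-expand $g=\phi^{-1}$ on the interval $J_\delta$. There $K_\delta^{-3}$ comes out in one step from $g''=-\phi''(g)/\psi(g)^3$, so you never need Lemma \ref{lemma: ecart bound 1}. The $V_j$ estimate is the same computation in either version.

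\textbf{Third estimate.} The paper telescopes $\mu^j-\mu^{i_0}$ through the $V_k$ and bounds $1/\psi(\mu^k)-1/\psi(\mu^{i_0})$. That is the alternative you sketch. Your main observation is right: the right-hand side of the third estimate equals that of the first plus a nonnegative term. So it follows from the first bound, and the telescoping argument adds nothing to the lemma itself. The $V_k$ decomposition only earns its keep later, for instance in Lemma \ref{lemma: cardinality}.

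\textbf{Domain remark.} You point out that the $V_j$ bound needs $\frac{j+1}{2N}\in J_\delta$. The paper leaves this implicit, and it is worth saying.
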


\begin{proof}
We directly compute: 
\begin{align*}
|\tilde\mu^j-\mu^j|&=|\mu^{i_0}+\cfrac{j-i_0}{2N\psi(\mu^{i_0})}-\mu_j|\\
&=\left|\mu^{i_0}-\mu^j+\dis\cfrac{\dis\int_{\mu^{i_0}}^{\mu^j}\psi(x)dx}{\psi(\mu^{i_0})}\right|\\
&\le\cfrac{1}{K_\delta}\left|(\mu^{i_0}-\mu^j)\psi(\mu^{i_0})+\int_{\mu^{i_0}}^{\mu^j}\psi(x)dx \right|\\
&=\cfrac{1}{K_\delta}\left|\int_{\mu^{i_0}}^{\mu^j}\left[\psi(x)-\psi(\mu^{i_0})\right]dx \right|\\
&\le \cfrac{||\psi'||_{L^\infty(I_\delta)}}{K_\delta}\left|\int_{\mu^{i_0}}^{\mu^j}\left|x-\mu^{i_0}\right|dx \right|\\
&\le \cfrac{||\psi'||_{L^\infty(I_\delta)}}{2K_\delta}\left(\mu^j-\mu^{i_0}\right)^2.
\end{align*}
Using \eqref{lemma: ecart bound 2}, yields \eqref{lemma:bound 3}.
\newline
To prove \eqref{lemma:Errorterm} we do the same computation as for $\eqref{lemma:bound 3}$, we get that for all $j\in E_{\delta,N}$, $$|V_j|\le\cfrac{||\phi''||_{L^{\infty}(I_\delta)}}{2K_\delta}\,\left(\mu^{j+1}-\mu^{j}\right)^2\le \cfrac{||\phi''||_{L^{\infty}(I_\delta)}}{K_\delta^3}\,\cfrac{1}{8N^2},$$
using Lemma \ref{lemma: ecart bound 1}
Without loss of generality, assume that $j>i_0$, we have: 
\begin{align*}
\tilde\mu^j-\mu^j&=\mu^{i_0}+\cfrac{j-i_0}{2N\psi(\mu^{i_0})}-\mu_j\\
&=-\sum_{k=i_0}^{j-1}\left(V_k+\cfrac{1}{2N\psi(\mu^k)}\right)+\cfrac{j-i_0}{2N\psi(\mu^{i_0})}\\
&=-\sum_{k=i_0}^{j-1}V_k+\sum_{k=i_0}^{j-1}\left[\cfrac{1}{2N\psi(\mu^{i_0})}-\cfrac{1}{2N\psi(\mu^k)}\right]
\end{align*}
Hence using the \eqref{lemma:bound 2}: 
\begin{align*}
|\tilde\mu^j-\mu^j|&\le \cfrac{||\phi''||_{L^{\infty}(I_\delta)}}{K_\delta^3}\,\cfrac{|j-i_0|}{8N^2}+\cfrac{||\phi''||_{L^{\infty}(I_\delta)}}{2N K_\delta^2}\sum_{k=i_0}^{j-1}|\mu^{i_0}-\mu_k|\\
&\le\cfrac{||\phi''||_{L^{\infty}(I_\delta)}}{K_\delta^3}\,\cfrac{|j-i_0|}{8N^2}+\cfrac{||\phi''||_{L^{\infty}(I_\delta)}}{4N^2 K_\delta^3}\sum_{k=i_0}^{j-1}|i_0-k|\\
&\le \cfrac{||\phi''||_{L^{\infty}(I_\delta)}}{K_\delta^3}\,\cfrac{|j-i_0|}{8N^2}+\cfrac{||\phi''||_{L^{\infty}(I_\delta)}}{8N^2 K_\delta^3}\,|i_0-j|^2
\end{align*}
\end{proof}
\begin{remark}
The terms $V_j$ introduced in \eqref{lemma:Errorterm} can be interpreted as error terms that measure the error when we approximate the function $\phi$ by the $\mu^i$. This error was also introduced in \cite{kiselev2022}.
\end{remark}
\subsubsection{Consequences of Section \ref{subsection lemma}}
\label{section: ecart racine}
As a consequence of Section \ref{subsection lemma} and regularity of $\phi$ in space and time, we get the following results. 
\begin{lemma}
\label{lemma: ecart racine proche}
There exists a constant $K>0$ (independent on $N$) such that:
\begin{enumerate}
\item{for all $N$, for all $j\in J_{1,N}\cup J_{2,N}$ 
\begin{equation}
\label{lemme:ecart racine proche}
\cfrac{1}{K}\cfrac{|i_0(N)-j|}{N}\le |\mu_{t_0-s_N}^j-\mu_{t_0-s_N}^{i_0(N)}|\le K \cfrac{|i_0(N)-j|}{N}
\end{equation}}
\item{for all $N$, for all $j\in J_{1,n}\cup J_{2,N}-\{i_0(N),i_0(N)+1\}$ 
\begin{equation}
\label{lemme: ecart racine et derive proche}
\cfrac{1}{K}\cfrac{|i_0(N)-j|}{N}\le |\mu_{t_0}^{i_0(N)}-\mu_{t_0-s_N}^{j}|\le K \cfrac{|i_0(N)-j|}{N}
\end{equation}}
\end{enumerate} 
\end{lemma}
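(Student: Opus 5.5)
Part (1) comes directly from Lemma \ref{lemma: ecart bound 1}, applied to the function $\phi(t_0-s_N,\cdot)$ on $I_\delta$. Part (2) follows from part (1), Rolle's interlacing, and the fact that $s_N=\frac{1}{2N}$ is a single derivation step.

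\textbf{Uniform constants.} By the choice of $\delta$ in Step 1, $\partial_\theta\phi(t,\cdot)$ is bounded below on $I_\delta$ by some $K_\delta>0$ and above by $\|\partial_\theta\phi\|_\infty$, uniformly for $t$ near $t_0$. Hence for $N$ large the constants in Lemma \ref{lemma: ecart bound 1} can be taken independent of $N$.

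\textbf{Part (1), interior indices.} For $j\in J_{1,N}\cup J_{2,N}$ we have $\mu^j_{t_0-s_N}\in I_\delta$. When both $j$ and $i_0(N)$ satisfy $\frac{j}{2N}\in J_\delta(t_0-s_N)$, the particle is $\mu^j_{t_0-s_N}=\min(\gamma^j_{t_0-s_N},\lambda^j_{t_0-s_N})$. I will use the fact, implicit in the heuristic (from $F^*\le\phi$ near $(t_0,\theta_0)$ and strict maximality), that on $I_\delta$ these particles coincide with the $\gamma^j=(\phi(t_0-s_N,\cdot))^{-1}(\frac{j}{2N})$. Then \eqref{lemma:bound 1}--\eqref{lemma:bound 2} give \eqref{lemme:ecart racine proche} with $K=\max(2\|\partial_\theta\phi\|_\infty,\,1/K_\delta)$.

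\textbf{Part (1), other indices.} The indices in $J_{1,N}\cup J_{2,N}$ that do not lie in $E_{\delta,N}$ are handled by ordering: the particles are increasing in $j$, so the distance is monotone in $|j-i_0(N)|$. I will compare with the last index of $E_{\delta,N}$. This is the point I expect to need care, and I would simply restrict $J_{2,N}$ to indices in $E_{\delta,N}$, shrinking $\delta$ slightly if necessary.

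\textbf{Part (2).} Write $i_0=i_0(N)$ and $\mu^j=\mu^j_{t_0-s_N}$. Since $s_N=\frac{1}{2N}$, the point $\mu^{i_0}_{t_0}$ is the unique root of $q_{2N}'$ in $(\mu^{i_0},\mu^{i_0+1})$. Triangle inequality:
\[
|\mu^{i_0}_{t_0}-\mu^{j}|\le|\mu^{i_0}-\mu^j|+|\mu^{i_0+1}-\mu^{i_0}|\le K\frac{|i_0-j|+1}{N}\le 2K\frac{|i_0-j|}{N}
\]
for $j\ne i_0$. Lower bound, with $j\notin\{i_0,i_0+1\}$:
\begin{itemize}
\item If $j\ge i_0+2$, then $\mu^j-\mu^{i_0}_{t_0}\ge\mu^j-\mu^{i_0+1}\ge\frac{|j-i_0-1|}{KN}\ge\frac{|j-i_0|}{2KN}$.
\item If $j\le i_0-1$, then $\mu^{i_0}_{t_0}-\mu^j\ge\mu^{i_0}-\mu^j\ge\frac{|i_0-j|}{KN}$.
\end{itemize}
Renaming $2K$ as $K$ gives \eqref{lemme: ecart racine et derive proche}.

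The only delicate step is the identification of the perturbed particles with the $\gamma^j$ on $I_\delta$, so that Section \ref{subsection lemma} applies. I expect this to follow from the construction in Step 1 and from $N$ being large.
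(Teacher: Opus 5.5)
Your proposal follows the paper's proof, with more detail: the paper says only that part (1) is a direct application of Lemma~\ref{lemma: ecart bound 1}, and that part (2) follows from part (1) together with the interlacing $\mu^{i_0(N)}_{t_0-s_N}<\mu^{i_0(N)}_{t_0}<\mu^{i_0(N)+1}_{t_0-s_N}$, which your triangle inequality and case analysis make explicit. The paper also takes for granted the identification $\mu^j_{t_0-s_N}=(\phi(t_0-s_N,\cdot))^{-1}(\frac{j}{2N})$ on $I_\delta$ that you single out (the appendix applies Lemma~\ref{lemma: ecart bound 1} this way), but it does not follow from $F^*\le\phi$ and strict maximality alone: these only give $F_{2N}\le\phi+o(1)$ near $(t_0,\theta_0)$, which still allows $\min(\gamma^j_{t_0-s_N},\lambda^j_{t_0-s_N})=\lambda^j_{t_0-s_N}$ for indices near $i_0(N)$, so at that step your argument is exactly as heuristic as the paper's.
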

\begin{proof}
The first point is a direct application of Lemma \ref{lemma: ecart bound 1}. The second one is a consequence of $\eqref{lemme:ecart racine proche}$ and the fact the fact that the roots of a trigonometric polynomial and its derivative interlace.
\end{proof}
The bound \eqref{lemme: ecart racine et derive proche} will also be true for $j=i_0(N)$ and $j=i_0(N)+1$ but the estimate are more difficult to obtain. Nonetheless the upper bound is immediate again by interlacing but the lower bound is less immediate.

\begin{lemma}
\label{lemma: cardinality}
We have the following bound: 
\begin{align*}
|J_{1,N}|&\sim \sqrt N\\
j_+-i_0(N)&\sim \sqrt N\\
2i_0-j_+-j_-&=O(1)
\end{align*}
\end{lemma}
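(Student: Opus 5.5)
The three estimates all follow from Lemma \ref{lemma: ecart racine proche}, applied with the very near window $\mathcal N_N$ of radius $1/\sqrt N$. The idea is to turn the two-sided distance bounds of that lemma into bounds on the indices whose particles fall in $\mathcal N_N$.

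\emph{Setup.} By definition,
$$J_{1,N}=\{j\,:\,|\mu^j_{t_0-s_N}-\mu^{i_0(N)}_{t_0}|\le 1/\sqrt N\}=\{j_-,\dots,j_+-1\}.$$
These indices are consecutive because the particles $\mu^j_{t_0-s_N}$ are increasing in $j$. By interlacing, $\mu^{i_0(N)}_{t_0}$ lies in $(\mu^{i_0}_{t_0-s_N},\mu^{i_0+1}_{t_0-s_N})$. This gap has length $O(1/N)$ by \eqref{lemme:ecart racine proche}. Hence, up to an error $O(1/N)$, the window $\mathcal N_N$ is the window of radius $1/\sqrt N$ centred at $\mu^{i_0}_{t_0-s_N}$.

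\emph{Bounds on $j_+-i_0(N)$ and $i_0(N)-j_-$.} Take $j=j_+-1$, the last index in $J_{1,N}$. Then $|\mu^{j}_{t_0-s_N}-\mu^{i_0}_{t_0}|\le 1/\sqrt N$, and \eqref{lemme: ecart racine et derive proche} (or \eqref{lemme:ecart racine proche} when $j\in\{i_0,i_0+1\}$) gives $|j-i_0|/(KN)\le 1/\sqrt N$, so $j_+-i_0\le K\sqrt N+1$.

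Conversely, $j_+$ itself is not in $J_{1,N}$. Provided $\delta$ is fixed and $N$ is large, $j_+\in J_{2,N}$, since the particles in $I_\delta$ have spacing $O(1/N)\ll 1/\sqrt N$ and so the window cannot reach the far part. The upper bound in \eqref{lemme: ecart racine et derive proche} then gives $1/\sqrt N<K|j_+-i_0|/N$, that is $j_+-i_0\ge \sqrt N/K$.

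The same argument bounds $i_0-j_-$. Together these give $j_+-i_0\sim\sqrt N$ and $|J_{1,N}|=j_+-j_-\sim\sqrt N$.

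\emph{Symmetry estimate $2i_0-j_+-j_-=O(1)$.} This is the main obstacle, because the constant-$K$ bounds above only give $O(\sqrt N)$ for this quantity. Here I would use the finer Lemma \ref{lemma: ecart bound 2}, applied to $\phi(t_0-s_N,\cdot)$ with reference index $i_0$. It gives
$$\mu^j_{t_0-s_N}=\mu^{i_0}_{t_0-s_N}+\frac{j-i_0}{2N\psi}+O\Big(\frac{|j-i_0|^2}{N^2}\Big),\qquad \psi=\partial_\theta\phi(t_0-s_N,\mu^{i_0}_{t_0-s_N}).$$
For $|j-i_0|\lesssim\sqrt N$ the error term is $O(1/N)$, which is comparable to one particle spacing.

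Write $c=\mu^{i_0}_{t_0}$, and let $c-\mu^{i_0}_{t_0-s_N}=:r/(2N\psi)$ with $r=O(1)$ by interlacing. The right endpoint $j_+$ is then determined by $(j_+-i_0)/(2N\psi)=r/(2N\psi)+1/\sqrt N+O(1/N)$. Likewise the left endpoint satisfies $(i_0-j_-)/(2N\psi)=-r/(2N\psi)+1/\sqrt N+O(1/N)$. Each relation holds up to an $O(1)$ index error from the discreteness and the quadratic error term. Subtracting them, the $1/\sqrt N$ terms cancel, and after multiplying by $2N\psi$ (which is bounded, $\psi\ge K_\delta>0$) we get
$$2i_0-j_+-j_-=-2r+O(1)=O(1).$$
The delicate point is checking that the quadratic error really is $O(1/N)$ uniformly over the window $|j-i_0|\le C\sqrt N$. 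This requires $\|\partial^2_{\theta\theta}\phi\|$ and $K_\delta$ to be bounded uniformly in $t$ near $t_0$, which holds because $\phi\in C^{1,1}_{t,2\pi}$ and $\partial_\theta\phi(t_0,\theta_0)>m$.
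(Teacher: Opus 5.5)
Your proposal is correct and follows essentially the paper's argument. The first two estimates come from comparing the window length $N^{-1/2}$ with the order-$1/N$ spacing of Lemma \ref{lemma: ecart racine proche}. The symmetry estimate comes from linearizing $\mu^j_{t_0-s_N}$ around $\mu^{i_0(N)}_{t_0-s_N}$ with slope $1/(2N\partial_\theta\phi)$ and $O(1/N)$ error over an $O(\sqrt N)$ index window, then subtracting the right- and left-endpoint relations so that the $N^{-1/2}$ terms cancel. The only difference is cosmetic: you invoke the cumulative bound \eqref{lemma:bound 3} directly, whereas the paper telescopes the one-step errors \eqref{lemma:Errorterm} (implicitly absorbing the $O(1/N)$ variation of $\partial_\theta\phi$ along the window), which yields the same estimate.
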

\begin{proof}
The two first points are immediate since the length of $\mathcal N_N$ is of order $1/\sqrt{N}$ and the space between two $\mu_{t_0-s_N}^i$ is of order $1/N$ by Lemma \ref{lemma: ecart racine proche}.
\newline
For the second estimate, let us notice that by definition of $j_+$ and $j_-$ and by Lemma \ref{lemma: ecart racine proche}, we have $$\mu_{t_0-s_N}^{j_+}-\mu_{t_0-s_N}^{i_0(N)}=N^{-1/2}+O(N^{-1}),\,\, \mu_{t_0-s_N}^{i_0(N)}-\mu_{t_0-s_N}^{j_-}=N^{-1/2}+O(N^{-1}).$$
But we also have that 
\begin{align}
\label{cardinality calcul 1}
\mu_{t_0-s_N}^{j_+}-\mu_{t_0-s_N}^{i_0(N)}=\sum_{k=i_0(N)}^{j_+-1}(\mu_{t_0-s_N}^{k+1}-\mu_{t_0-s_N}^{k}) \\
\mu_{t_0-s_N}^{i_0(N)}-\mu_{t_0-s_N}^{j_-}=\sum_{k=j_-}^{i_0(N)-1}(\mu_{t_0-s_N}^{k+1}-\mu_{t_0-s_N}^{k}).
\end{align}
Using \eqref{lemma:Errorterm} we get $$\mu_{t_0-s_N}^{k+1}-\mu_{t_0-s_N}^{k}=\cfrac{1}{2N\partial_\theta\phi(t_0-s_N, \mu_{t_0-s_N}^{k})}+O(N^{-2}).$$
Subtracting the two equalities of \eqref{cardinality calcul 1} yields: $$O(N^{-1})=\cfrac{2i_0(N)-j_+-j_-}{2N\partial_\theta\phi(t_0-s_N,\mu_{t_0-s_N}^{i_0(N)})}+O(N^{1/2} N^{-2}+N^{-1}).$$
This gives the result.
\end{proof}

\begin{lemma}
\label{lemma: ecart racine proche derive un espace}
There exists a constant $K>0$ such that: $$\cfrac{1}{KN}\le \min\{ \mu_{t_0}^{i_0(N)}-\mu_{t_0-s_N}^{i_0(N)},\mu_{t_0-s_N}^{i_0(N)+1}-\mu_{t_0}^{i_0(N)} \}\le \max\{ \mu_{t_0}^{i_0(N)}-\mu_{t_0-s_N}^{i_0(N)},\mu_{t_0-s_N}^{i_0(N)+1}-\mu_{t_0}^{i_0(N)} \}\le \cfrac{K}{N}.$$
\end{lemma}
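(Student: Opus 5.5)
The upper bound follows from interlacing. By construction, $\mu_{t_0}^{i_0(N)}$ is the unique root of $q_{2N}'$ in $(\mu_{t_0-s_N}^{i_0(N)},\mu_{t_0-s_N}^{i_0(N)+1})$. Both gaps are therefore at most $\mu_{t_0-s_N}^{i_0(N)+1}-\mu_{t_0-s_N}^{i_0(N)}$, and by \eqref{lemme:ecart racine proche} of Lemma \ref{lemma: ecart racine proche} this is at most $K/N$. So the real content of the lemma is the lower bound. By symmetry (reflect $\theta\mapsto-\theta$ and relabel indices), it suffices to show that $\mu_{t_0}^{i_0(N)}-\mu_{t_0-s_N}^{i_0(N)}\ge 1/(KN)$.

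Set $x=\mu_{t_0}^{i_0(N)}$, $a=\mu_{t_0-s_N}^{i_0(N)}$, $b=\mu_{t_0-s_N}^{i_0(N)+1}$, and argue by contradiction: assume $d:=x-a=o(1/N)$ along a subsequence. Start from the master identity \eqref{master eq}. Its $j=i_0(N)$ term is $\frac{1}{2N}\cotan(d/2)\approx\frac{1}{Nd}$, which tends to $+\infty$. I will show that every other contribution is bounded below by $-C$ for some $C$ independent of $N$; then the identity sum $=0$ is impossible, which gives the contradiction. The key point is that only a lower bound on the remaining terms is needed, not their exact limit.

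The remaining terms split into three groups.
\begin{itemize}
\item[(i)] The far-away indices $J_{3,N}$. Here $|x-\mu^j|\ge\delta/2$ modulo $2\pi$, so each term is bounded by $\cotan(\delta/4)$ in absolute value. The normalised sum $S_{3,N}$ is therefore $O(1)$.
\item[(ii)] The indices $j\le i_0(N)-1$ in $J_{1,N}\cup J_{2,N}$. For these, $x-\mu^j\in(0,\pi)$, so the cotangent terms are positive and can be dropped in a lower bound.
\item[(iii)] The indices $j\ge i_0(N)+1$ in $J_{1,N}\cup J_{2,N}$. These give negative terms, and this group is the main obstacle.
\end{itemize}

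For (iii), use \eqref{lemme: ecart racine et derive proche} for $j\ge i_0(N)+2$, and interlacing for $j=i_0(N)+1$. Since $x\le a+d$, we have $\mu^j-x\ge \mu^j-a-d$, and by \eqref{lemme:ecart racine proche} this is $\ge \frac{j-i_0}{KN}-d\ge\frac{j-i_0}{2KN}$ for $N$ large. Hence each negative term satisfies $\frac{1}{2N}\cotan\big((\mu^j-x)/2\big)\le \frac{C}{j-i_0}$. Summing naively gives a harmonic sum of size $C\log N$, which would not be enough by itself. To cancel it I pair it against (ii): for $1\le k\le \min(i_0-j_-,\,j_+-i_0)$, compare index $i_0-k$ with index $i_0+k$ (together with the shift by one). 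Using Lemma \ref{lemma: ecart bound 2}, the positions are $\tilde\mu^{j}$ up to $O(k^2/N^2)$ errors. Then
\[
\frac{1}{2N}\Big[\cotan\tfrac{x-\mu^{i_0-k}}{2}-\cotan\tfrac{\mu^{i_0+k}-x}{2}\Big]
\]
equals $\frac{1}{N}\big[\frac{1}{k/(2N\psi)+d}-\frac{1}{k/(2N\psi)-d}\big]$ plus errors. The main part is $O(d/k^2)\cdot N$, which is $o(1/k^2)$ and hence summable to $o(1)$. The Taylor errors contribute $O(1/N)\sum k^0\cdot$(bounded), giving $O(N^{-1/2})$ on $J_{1,N}$. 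The unpaired leftover indices, whose number is $O(1)$ by Lemma \ref{lemma: cardinality} on the very near part, each contribute $O(1)$.

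On the near part $J_{2,N}$, I instead compare with the integral, as in the proof of Proposition \ref{prop: convergence nearpart}. Monotonicity gives Riemann-sum bounds, so $S_{2,N}$ is $O(1)$ (the principal-value integrals of the smooth $\partial_x\phi$ are bounded). The shift by $d$ only affects terms at distance $\ge N^{-1/2}$, where it changes the cotangent by at most $d\,N$ times a summable quantity.

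Collecting (i)–(iii), all terms other than $j=i_0(N)$ sum to at least $-C$, while the $j=i_0(N)$ term exceeds $\frac{1}{2Nd}\to\infty$. This contradicts \eqref{master eq}, and therefore $d\ge 1/(KN)$. The delicate step is the symmetric pairing in (iii): it must show that the logarithmically divergent negative sum is compensated by the positive left-hand terms up to $O(1)$. This is exactly where Lemma \ref{lemma: cardinality}, that is $2i_0-j_+-j_-=O(1)$, is used.
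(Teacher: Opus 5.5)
Your proposal is correct and follows essentially the paper's argument: isolate the $j=i_0(N)$ term of the master identity, neutralise the logarithmically divergent very-near sum by pairing $i_0(N)\pm k$ (using $2i_0-j_+-j_-=O(1)$ and the $O(k^2/N^2)$ position errors), and bound $S_{2,N}$ and $S_{3,N}$ by $O(1)$. The differences are cosmetic. Your contradiction hypothesis $d=o(1/N)$ makes the paired terms $o(1)$ directly, whereas the paper uses the upper bound $d\le K/N$ to get $O(Nd)=O(1)$. Your trivial bound $|\cotan|\le\cotan(\delta/4)$ on $J_{3,N}$ replaces the paper's $I_{2,\delta}$ estimate.
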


\begin{proof}
The upper bound is immediate using the interlacing property of the roots. 
\newline
We now focus on the lower bound. We suppose that $\mu_{t_0}^{i_0(N)}$ is closer to $\mu_{t_0-s_N}^{i_0(N)}$ (the other can be do by similar computations). We first obtain thanks to this hypothesis and Lemma \ref{lemma: ecart racine proche} that 
\begin{equation}
\label{first lower bound}
|\mu_{t_0}^{i_0(N)}-\mu_{t_0-s_N}^{i_0(N)+1}|\ge \cfrac{|\mu_{t_0-s_N}^{i_0(N)}-\mu_{t_0-s_N}^{i_0(N)+1}|}{2}\gtrsim \cfrac{1}{N}.
\end{equation}
It remains to show that $$\mu_{t_0}^{i_0(N)}-\mu_{t_0-s_N}^{i_0(N)}\gtrsim \cfrac{1}{N}.$$ Starting from \eqref{master eq}, using Lemma \ref{lemma: cardinality} and Lemma \ref{lemma: ecart racine proche} yields 
\begin{equation}
\label{eq:S1,N bis}
\begin{split}
0=S_{1,N}+S_{2,N}+S_{3,N}&=\cfrac{1}{2N}\sum_{j\in J_{1,N}}\cotan\left(\cfrac{\mu_{t_0}^{i_0(N)}-\mu_{t_0-s_N}^j}{2}\right)+S_{2,N}+S_{3,N}\\
&=\cfrac{1}{2N}\sum_{j\in J_{1,N}}\left(\cfrac{2}{\mu_{t_0}^{i_0(N)}-\mu_{t_0-s_N}^j}+O(\mu_{t_0}^{i_0(N)}-\mu_{t_0-s_N}^j)\right)+S_{2,N}+S_{3,N}\\
&=\cfrac{1}{2N}\sum_{j\in J_{1,N}}\left(\cfrac{2}{\mu_{t_0}^{i_0(N)}-\mu_{t_0-s_N}^j}+O(N^{-1/2})\right)+S_{2,N}+S_{3,N}\\
&=\cfrac{1}{2N}\sum_{j\in J_{1,N}}\cfrac{2}{\mu_{t_0}^{i_0(N)}-\mu_{t_0-s_N}^j}+O(N^{-1})+S_{2,N}+S_{3,N}
\end{split}
\end{equation}
Since now we suppose that $j_+-i_0(N)\ge i_0(N)-j_-$ (the other case can be do by symmetry doing the same computations).
Isolating the index $i_0(N)$ in $S_{1,N}$ gives:
\begin{equation}
\label{eq:lower bound}
\begin{split}
&\cfrac{1}{N}\cfrac{1}{\mu_{t_0}^{i_0(N)}-\mu_{t_0-s_N}^{i_0(N)}}=-\cfrac{1}{2N}\sum_{l=j_-}^{2i_0(N)-j_+}\cfrac{2}{\mu_{t_0}^{i_0(N)}-\mu_{t_0-s_N}^{l}}\\
&-\cfrac{1}{2N}\sum_{l=1}^{j_+-i_0(N)-1}\left[\cfrac{2}{\mu_{t_0}^{i_0(N)}-\mu_{t_0-s_N}^{i_0(N)+l}}+\cfrac{2}{\mu_{t_0}^{i_0(N)}-\mu_{t_0-s_N}^{i_0(N)-l}}\right]+O(N^{-1})-S_{2,N}-S_{3,N}
\end{split}
\end{equation}
First we estimate the first sum. By definition of $j_+$ and $j_-$ and by Lemma \ref{lemma: cardinality} we have: 
\begin{equation}
\label{eq:lower bound 2}
\begin{split}
\left|\cfrac{1}{2N}\sum_{l=j_-}^{2i_0(N)-j_+}\cfrac{2}{\mu_{t_0}^{i_0(N)}-\mu_{t_0-s_N}^{l}}\right|&\le \cfrac{1}{N} (2i_0(N)-j_+-j_-+1)O(N^{1/2})=O(N^{-1/2})
\end{split}
\end{equation}
For the second one, compute $$\cfrac{1}{\mu_{t_0}^{i_0(N)}-\mu_{t_0-s_N}^{i_0(N)+l}}+\cfrac{1}{\mu_{t_0}^{i_0(N)}-\mu_{t_0-s_N}^{i_0(N)-l}}=\cfrac{2\mu_{t_0}^{i_0(N)}-\mu_{t_0-s_N}^{i_0(N)+l}-\mu_{t_0-s_N}^{i_0(N)-l}}{(\mu_{t_0}^{i_0(N)}-\mu_{t_0-s_N}^{i_0(N)+l})(\mu_{t_0}^{i_0(N)}-\mu_{t_0-s_N}^{i_0(N)-l})}.$$
We first estimate the numerator.
As in Section \ref{subsection lemma}, for all $j\in J_{1,N}\cup J_{2,N}$, let $$V_{j,N}=\mu_{t_0-s_N}^{j+1}-\mu_{t_0-s_N}^j-\cfrac{1}{2N\partial_x\phi(t_0-s_N,\mu_{t_0-s_N}^j)}.$$
By doing similar computations as in Lemma \ref{lemma: ecart bound 1} and Lemma \ref{lemma: ecart bound 2} show that there exist $K>0$ such that for all $j\in J_{1,N}\cup J_{2,N}$, for all $N$ $$V_{j,N}\le \cfrac{K}{N^2}.$$
Using the regularity of $\phi$ and the estimate for $V_{j,N}$, we get: 
\begin{align*}
&\mu_{t_0-s_N}^{i_0(N)+l}+\mu_{t_0-s_N}^{i_0(N)-l}=\\
&2\mu_{t_0-s_N}^{i_0(N)}+\sum_{k=1}^l\left[\cfrac{1}{2N\partial_x\phi(t_0-s_N,\mu_{t_0-s_N}^{i_0(N)+k-1})}+V_{i_0(N)+k-1,N}-\cfrac{1}{2N\partial_x\phi(t_0-s_N,\mu_{t_0}^{i_0(N)-k})}+V_{i_0(N)+k,N}\right]\\
&=2\mu_{t_0-s_N}^{i_0(N)}+O(l N^{-2})+O\left(\cfrac{1}{N}\sum_{k=1}^l\left|\mu_{t_0-s_N}^{i_0(N)+k-1}-\mu_{t_0-s_N}^{i_0(N)-k}\right|\right)\\
&=2\mu_{t_0-s_N}^{i_0(N)}+O(l N^{-2})+O\left(\cfrac{1}{N}\sum_{k=1}^l\cfrac{2k}{N}\right)\\
&=2\mu_{t_0-s_N}^{i_0(N)}+O(l N^{-2})+O(l^2N^{-2})
\end{align*}
Using \eqref{first lower bound} and \eqref{lemma: ecart racine proche} and Lemma \ref{lemma: cardinality}, it yields 
\begin{equation}
\label{lower bound 4}
\begin{split}
\cfrac{1}{2N}\sum_{l=1}^{j_+-i_0(N)-1}\left|\cfrac{2}{\mu_{t_0}^{i_0(N)}-\mu_{t_0-s_N}^{i_0(N)+l}}+\cfrac{2}{\mu_{t_0}^{i_0(N)}-\mu_{t_0-s_N}^{i_0(N)-l}}\right|&\le\cfrac{1}{N}\sum_{l=1}^{\sim n^{1/2}}O\left(\cfrac{\mu_{t_0}^{i_0(N)}-\mu_{t_0-s_N}^{i_0(N)}+l^2N^{-2}+lN^{-2}}{l^2N^{-2}}\right)\\
&=O(1+N^{-1/2}+\log(N)N^{-1})=O(1)
\end{split}
\end{equation}
So, \eqref{eq:lower bound}, \eqref{eq:lower bound 2}, \eqref{lower bound 4} give 

\begin{equation}
\label{eq:lower bound 5}
\cfrac{1}{N}\cfrac{1}{\mu_{t_0}^{i_0(N)}-\mu_{t_0-s_N}^{i_0(N)}}=-S_{2,N}-S_{3,N}+O(1)
\end{equation}
Using similar estimates as for Proposition \ref{prop: very near part} and for Proposition \ref{prop: convergence farpart} we can obtain 
\begin{equation}
\left\{
\begin{split}
|S_{2,N}|&\le\pi H_{1,\delta}[\partial_x\phi(t_0,.)](\mu_{t_0-s_N}^{i_0(N)})+O(1)\\
|S_{3,N}|&\le 4\pi I_{2,\delta}[F^*(t_0-s_N,.)](\mu_{t_0-s_N}^{i_0(N)})+\cotan\left(\cfrac{\delta}{2}\right)(F^*(t_0-s_N,\mu_{t_0-s_N}^{i_0(N)}+\delta)+F^*(t_0-s_N),\mu_{t_0-s_N}^{i_0(N)}-\delta))
\end{split}
\right.
\end{equation}
Since $\phi$ is smooth and $F$ is non-decreasing and satisfies for every $t\ge0$ $F(t,.+2\pi)=F(t,.)+1$ we get that the previous quantities are bounded 
\begin{equation}
\left\{
\begin{split}
\left| H_{1,\delta}[\partial_x\phi(t_0-s_N,.)](\mu_{t_0-s_N}^{i_0(N)})\right|&\lesssim \sup_{t\in[(t_0-1)_+,t]}||\partial_{xx}\phi(t,.)||_\infty=O(1)\\
\left|I_{2,\delta}[F^*(t_0-s_N,.)](\mu_{t_0-s_N}^{i_0(N)})\right|&\le O(1)\int_{ \T\cap |\theta|>\delta}\cfrac{1}{\sin^2(\theta/2)}d\theta=O(1)
\end{split}
\right.
\end{equation}
Hence \eqref{eq:lower bound 5} gives 
\begin{equation}
\label{eq:lower bound 6}
\cfrac{1}{N}\cfrac{1}{\mu_{t_0}^{i_0(N)}-\mu_{t_0-s_N}^{i_0(N)}}=O(1),
\end{equation}
which concludes the proof.
\end{proof}

As in Section \ref{subsection lemma} and in Section \ref{section: heuristic pde} we introduce the points $$\tilde\mu_{t_0-s_N}^j:=\mu_{t_0-s_N}^{i_0(N)}+\cfrac{j-i_0}{2N\partial_\theta \phi(t_0-s_N,\mu_{t_0-s_N}^{i_0})}.$$
As an immediate consequence of Lemma $\ref{lemma: ecart racine proche}$, Lemma $\ref{lemma: ecart racine proche derive un espace}$ Lemma \ref{lemma: ecart bound 2} and interlacing of the roots we have the estimate obtained for the bound between $\mu_{t_0}^{i_0(N)}$ and $\mu_{t_0-s_N}^{j}$ are also valid for $\mu_{t_0}^{i_0(N)}$ and $\tilde\mu_{t_0-s_N}^{j}$.

\begin{lemma}
\label{lemma: interlacing racine tilde et derive}
There exists $K$ independent of $N$ such that for all $j\ne i_0(N)$ 
\begin{align*}
&\left|\mu_{t_0}^{i_0(N)}-\tilde\mu_{t_0-s_N}^{j}\right|\ge \cfrac{\left|i_0(N)-j\right|}{KN}\\
&\left|\mu_{t_0-s_N}^{j}-\tilde\mu_{t_0-s_N}^{j}\right|\le K\cfrac{|j-i_0(N)|}{N^2}+K\cfrac{|i_0(N)-j|^2}{N^2}.
\end{align*}
\end{lemma}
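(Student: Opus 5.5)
The second bound follows by rerunning the computation of Lemma \ref{lemma: ecart bound 2} at time $t_0-s_N$. The first bound then follows from the second, combined with Lemma \ref{lemma: ecart racine proche}, Lemma \ref{lemma: ecart racine proche derive un espace} and the interlacing of roots. As in the paper, the estimates are meant for indices $j$ in the region where the $\mu^j_{t_0-s_N}$ are given by $\phi(t_0-s_N,\cdot)^{-1}(j/2N)$, namely $j\in J_{1,N}\cup J_{2,N}$, and in practice for $j\in J_{1,N}$, which is where they are used.

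\textbf{Second inequality.} Apply Section \ref{subsection lemma} to the function $\phi(t_0-s_N,\cdot)$ with $i_0=i_0(N)$. The points $\mu^j$ there are exactly $\mu^j_{t_0-s_N}$, and $\tilde\mu^j$ is $\tilde\mu^j_{t_0-s_N}$. The subtlety is that $\mu^{i_0(N)}_{t_0-s_N}=\min(\gamma^{i_0(N)},\lambda^{i_0(N)})$. Near $\theta_0$, however, one has $\gamma\le\lambda$, since $F^*\le\phi$ there; after shrinking $\delta$, the relevant points are therefore the $\gamma$'s. Estimate \eqref{lemma:bound 4} then gives
$$|\mu^j_{t_0-s_N}-\tilde\mu^j_{t_0-s_N}|\le \frac{\|\partial^2_{\theta}\phi\|}{K_\delta^3}\Big(\frac{|j-i_0|}{8N^2}+\frac{|j-i_0|^2}{8N^2}\Big).$$
Here $K_\delta$ and $\|\partial_\theta^2\phi\|_{L^\infty}$ are taken uniformly for $t$ in a neighbourhood of $t_0$. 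This uniformity holds because $\phi\in C^{1,1}$ and $\partial_\theta\phi(t_0,\theta_0)>m$, so $\partial_\theta\phi\ge m/2$ on $I_\delta$ for $t$ close to $t_0$. The constant is therefore independent of $N$.

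\textbf{First inequality.} Split according to the size of $|j-i_0(N)|$.

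For $j=i_0(N)+1$, directly $\tilde\mu^{i_0+1}-\mu^{i_0}_{t_0-s_N}=1/(2N\partial_\theta\phi)$. Lemma \ref{lemma: ecart racine proche derive un espace} gives $\mu^{i_0}_{t_0}-\mu^{i_0}_{t_0-s_N}\le \mu^{i_0+1}_{t_0-s_N}-\mu^{i_0}_{t_0-s_N}-1/(KN)$. Combined with the second bound, which says $\mu^{i_0+1}_{t_0-s_N}=\tilde\mu^{i_0+1}+O(N^{-2})$, this yields $\tilde\mu^{i_0+1}-\mu^{i_0}_{t_0}\ge 1/(KN)-O(N^{-2})\gtrsim 1/N$.

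For $j=i_0(N)-1$ and more generally $j<i_0(N)$, interlacing gives $\mu^{i_0}_{t_0}>\mu^{i_0}_{t_0-s_N}$, while $\tilde\mu^j<\mu^{i_0}_{t_0-s_N}$ by at least $|j-i_0|/(2N\|\partial_\theta\phi\|_\infty)$. The bound is then immediate.

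For $j\ge i_0(N)+2$ and $|j-i_0|$ small, the triangle inequality is used: \eqref{lemme: ecart racine et derive proche} gives $|\mu^{i_0}_{t_0}-\mu^j_{t_0-s_N}|\ge |j-i_0|/(KN)$, and the error $|\mu^j-\tilde\mu^j|\lesssim |j-i_0|^2/N^2$ is at most half of this as long as $|j-i_0|\le cN$.

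For $|j-i_0|\ge cN$, use the explicit form of $\tilde\mu^j$ instead. We have $|\tilde\mu^j-\mu^{i_0}_{t_0-s_N}|=|j-i_0|/(2N\partial_\theta\phi)$, while $|\mu^{i_0}_{t_0}-\mu^{i_0}_{t_0-s_N}|\le K/N$. Hence $|\mu^{i_0}_{t_0}-\tilde\mu^j|\ge |j-i_0|/(2N\|\partial_\theta\phi\|)-K/N\gtrsim |j-i_0|/N$ once $|j-i_0|$ exceeds a fixed constant. The finitely many remaining small $|j-i_0|$ are covered by the previous regime.

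\textbf{Main obstacle.} The delicate step is the neighbour $j=i_0(N)+1$. There the trivial bounds degenerate to $O(1/N)$ with an $O(N^{-2})$ error, and one genuinely needs the lower bound of Lemma \ref{lemma: ecart racine proche derive un espace}, the fact that the derivative root stays at distance $\gtrsim 1/N$ from both neighbours. Everything else is bookkeeping with uniform-in-$t$ constants.
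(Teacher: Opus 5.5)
Your proposal is correct and is essentially the paper's argument. The paper only asserts that the lemma is an immediate consequence of Lemma~\ref{lemma: ecart bound 2} (applied at time $t_0-s_N$), Lemma~\ref{lemma: ecart racine proche}, Lemma~\ref{lemma: ecart racine proche derive un espace} and interlacing, and your case split (interlacing for $j<i_0(N)$, the $\gtrsim 1/N$ gap for $j=i_0(N)+1$, and the triangle inequality or the explicit form of $\tilde\mu^j_{t_0-s_N}$ otherwise) unpacks exactly that claim.

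One loose point, which you share with the paper, is the identification $\mu^j_{t_0-s_N}=\phi(t_0-s_N,\cdot)^{-1}(j/2N)$ near $\theta_0$. The inequality $F^*\le\phi$ controls only the $\limsup$ of $F_{2N}$, so it does not by itself give $\gamma^j\le\lambda^j$ for each finite $N$, even after shrinking $\delta$.
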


We can now prove the main proposition of this section.
\begin{proposition}
We have the following result:
\begin{equation}
\label{equation: lim sup S_1}
\lim_N S_{1,N}=2\pi\partial_\theta \phi(t_0,\theta_0)\cotan(\pi L(t_0,\theta_0)\partial_\theta \phi(t_0,\theta_0))
\end{equation}
\end{proposition}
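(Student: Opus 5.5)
The plan is to replace the true positions $\mu_{t_0-s_N}^j$ ($j\in J_{1,N}$) by the equally spaced points $\tilde\mu_{t_0-s_N}^j$, which should cost only $o(1)$, and then to evaluate the resulting sum with the Euler identity for $\cotan$, as in the heuristic of Section \ref{section: heuristic pde}. Throughout, write
\[
\Delta_N:=\mu_{t_0}^{i_0(N)}-\mu_{t_0-s_N}^{i_0(N)},\qquad
a_N:=\partial_\theta\phi(t_0-s_N,\mu_{t_0-s_N}^{i_0(N)}).
\]
Since $s_N=\frac{1}{2N}$, we have $L(t_0,\theta_0)=\lim_N 2N\Delta_N$, and Lemma \ref{lemma: ecart racine proche derive un espace} guarantees $2N\Delta_N\in[1/K,2K]$. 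We also have $a_N\to\partial_\theta\phi(t_0,\theta_0)$, since $\mu_{t_0-s_N}^{i_0(N)}\to\theta_0$.

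\textbf{Step 1: expansion of $\cotan$.} Every argument in $S_{1,N}$ is $O(N^{-1/2})$, so $\cotan(x/2)=2/x+O(x)$. Summed over $|J_{1,N}|\sim\sqrt N$ terms with prefactor $\frac{1}{2N}$, the $O(x)$ part contributes $O(N^{-1})$, exactly as in \eqref{eq:S1,N bis}. It therefore suffices to study
\[
\frac1N\sum_{j\in J_{1,N}}\frac{1}{\mu_{t_0}^{i_0(N)}-\mu_{t_0-s_N}^j}.
\]

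\textbf{Step 2: replacement by $\tilde\mu$.} For $j\in J_{1,N}$, write $l=j-i_0(N)$ with $|l|\lesssim\sqrt N$. The difference between the summand above and the same summand with $\tilde\mu_{t_0-s_N}^j$ in place of $\mu_{t_0-s_N}^j$ equals
\[
\frac{\tilde\mu_{t_0-s_N}^j-\mu_{t_0-s_N}^j}{\bigl(\mu_{t_0}^{i_0(N)}-\mu_{t_0-s_N}^j\bigr)\bigl(\mu_{t_0}^{i_0(N)}-\tilde\mu_{t_0-s_N}^j\bigr)}.
\]
For $l\neq 0,1$, Lemma \ref{lemma: interlacing racine tilde et derive} and Lemma \ref{lemma: ecart racine proche} bound this by
\[
\frac{K(|l|+l^2)N^{-2}}{l^2N^{-2}/K^2}=O(1).
\]
After multiplying by $\frac1N$ and summing over $\sqrt N$ indices, the total error is $O(N^{-1/2})$. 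The indices $l=0,1$ need separate care: for $l=0$ the two terms are identical because $\tilde\mu^{i_0}=\mu^{i_0}$; for $l=1$ the denominators are $\gtrsim N^{-2}$ by Lemma \ref{lemma: ecart racine proche derive un espace}, while the numerator is $O(N^{-2})$, so after the factor $\frac1N$ this contributes $O(N^{-1})$.

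\textbf{Step 3: the explicit sum.} Substituting the equally spaced points gives
\[
\frac1N\sum_{l=j_--i_0}^{j_+-1-i_0}\frac{2Na_N}{2Na_N\Delta_N-l}
=2a_N\sum_{l}\frac{1}{x_N-l},\qquad x_N:=2Na_N\Delta_N\in(0,1)\text{ bounded away from }0,1.
\]
The index range is nearly symmetric: $|l|\le M_N\sim\sqrt N$, with an asymmetry of $O(1)$ terms by Lemma \ref{lemma: cardinality}, each of size $O(N^{-1/2})$. Hence
\[
\sum_{|l|\le M_N}\frac{1}{x_N-l}\to\pi\cotan(\pi x)\quad\text{uniformly for }x\text{ in compacts of }(0,1),
\]
by the Euler identity (symmetric partial sums converge with tail $O(1/M_N)$). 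Since $x_N\to L(t_0,\theta_0)\,\partial_\theta\phi(t_0,\theta_0)$, we obtain
\[
S_{1,N}\to 2\pi\,\partial_\theta\phi(t_0,\theta_0)\cotan\bigl(\pi L(t_0,\theta_0)\partial_\theta\phi(t_0,\theta_0)\bigr).
\]

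\textbf{Main obstacle.} The delicate point is Step 2 near the singular indices, and more generally the uniformity of the error bounds. The terms with $l$ close to $0$ are only controlled because Lemma \ref{lemma: ecart racine proche derive un espace} provides the nondegenerate gap $\Delta_N\gtrsim 1/N$; without this gap the replacement would fail. One must also check that $x_N$ stays in a compact subset of $(0,1)$, which again follows from the same lemma together with the interlacing property of the roots.
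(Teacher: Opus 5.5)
Your proposal is correct and follows essentially the same route as the paper. You expand $\cotan(x/2)=2/x+O(x)$ to reduce to the sum $\tilde S_{1,N}$, replace $\mu_{t_0-s_N}^j$ by the equally spaced points $\tilde\mu_{t_0-s_N}^j$ at cost $O(N^{-1/2})$ using the lemma comparing $\mu$ and $\tilde\mu$, and compare the nearly symmetric truncated sum with the Euler series for $\cotan$; the paper phrases this last step as bounding the tail $\tilde S_{1,2,N}$ by pairing the terms $\pm l$. Your separate treatment of the index $l=1$, and your check that $x_N=2Na_N\Delta_N$ stays in a compact subset of $(0,1)$ (which is needed to pass to the limit inside $\cotan$), make explicit two points the paper leaves implicit.
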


\begin{proof}
Let us remark that by definition of $S_{1,N}$ and Lemma \ref{lemma: cardinality}: 
\begin{equation}
\label{eq:S1,N}
\begin{split}
S_{1,N}&=\cfrac{1}{2N}\sum_{j\in J_{1,N}}\cotan\left(\cfrac{\mu_{t_0}^{i_0(N)}-\mu_{t_0-s_N}^j}{2}\right)\\
&=\cfrac{1}{2N}\sum_{j\in J_{1,N}}\left(\cfrac{2}{\mu_{t_0}^{i_0(N)}-\mu_{t_0-s_N}^j}+O(\mu_{t_0}^{i_0(N)}-\mu_{t_0-s_N}^j)\right)\\
&=\cfrac{1}{2N}\sum_{j\in J_{1,N}}\left(\cfrac{2}{\mu_{t_0}^{i_0(N)}-\mu_{t_0-s_N}^j}+O(N^{-1/2})\right)\\
&=\cfrac{1}{2N}\sum_{j\in J_{1,N}}\cfrac{2}{\mu_{t_0}^{i_0(N)}-\mu_{t_0-s_N}^j}+O(N^{-1})
\end{split}
\end{equation}
Hence to compute the limit of $S_{1,N}$, we have to deal with the limit of $$\tilde S_{1,N}:=\cfrac{1}{2N}\sum_{j\in J_{1,N}}\cfrac{2}{\mu_{t_0}^{i_0(N)}-\mu_{t_0-s_N}^j}.$$
As explained in Section \ref{section: heuristic pde} we shall compare this sum with $\cotan$ thanks to the the Euler identity using the points $\tilde\mu_{t_0-s_N}^j$. 
We shall prove that 
\begin{equation}
\label{eq:ecart S-cotan}
\tilde S_{1,N}-2\pi\partial_\theta\phi(t_0-s_N,\mu_{t_0-s_N}^{i_0(N)})\cotan\left(\pi\partial_\theta\phi(t_0-s_N,\mu_{t_0-s_N}^{i_0(N)})\cfrac{\mu_{t_0}^{i_0(N)}-\mu_{t_0-s_N}^{i_0(N)}}{s_N}\right)=o(1)
\end{equation}
Taking the limit in \eqref{eq:ecart S-cotan} gives \eqref{equation: lim sup S_1}.
Let us now focus on $\eqref{eq:ecart S-cotan}$. Using the Euler identity:
\begin{align*}
\tilde S_{1,N}-&2\pi\partial_\theta\phi(t_0-s_N,\mu_{t_0-s_N}^{i_0(N)})\cotan\left(\pi\partial_\theta\phi(t_0-s_N,\mu_{t_0-s_N}^{i_0(N)})\cfrac{\mu_{t_0}^{i_0(N)}-\mu_{t_0-s_N}^{i_0(N)}}{s_N}\right)=\\
&\cfrac{1}{2N}\sum_{j\in J_{1,N}}\left[\cfrac{2}{\mu_{t_0}^{i_0(N)}-\mu_{t_0-s_N}^j}-\cfrac{2}{\mu_{t_0}^{i_0(N)}-\tilde\mu_{t_0-s_N}^j}\right]-\cfrac{1}{2N}\sum_{j\notin J_{1,N}} \cfrac{2}{\mu_{t_0}^{i_0(N)}-\tilde\mu_{t_0-s_N}^j}\\
&=:\tilde S_{1,1,N}+\tilde S_{1,2,N}
\end{align*}
First we estimate $\tilde S_{1,2,N}$:
\begin{align*}
\tilde S_{1,2,N}&=\cfrac{1}{2N}\sum_{j\notin J_{1,N}} \cfrac{2}{\mu_{t_0}^{i_0(N)}-\tilde\mu_{t_0-s_N}^j}\\
&=\cfrac{1}{2N}\sum_{j\ge j_{+}-i_0(N)} \left[\cfrac{2}{\mu_{t_0}^{i_0(N)}-\tilde\mu_{t_0-s_N}^{i_0+j}}+\cfrac{2}{\mu_{t_0}^{i_0(N)}-\tilde\mu_{t_0-s_N}^{i_0(N)-j}}\right]+\cfrac{1}{2N}\sum_{j=2i_0(N)-j_++1}^{j_{-}-1} \cfrac{2}{\mu_{t_0}^{i_0(N)}-\tilde\mu_{t_0-s_N}^j}
\end{align*}
By Lemma \ref{lemma: cardinality}, Lemma \ref{lemma: ecart racine proche} and Lemma \ref{lemma: ecart racine proche derive un espace} and interlacing of the root we deduce that the second term of the sum is of order $O(N^{-1})O(1)O(N^{-1/2}N)=O(N^{-1/2})$. So we get: $$\tilde S_{1,2,N}=\cfrac{1}{2N}\sum_{j\ge j_{+}-i_0(N)} \left[\cfrac{2}{\mu_{t_0}^{i_0(N)}-\tilde\mu_{t_0-s_N}^{i_0+j}}+\cfrac{2}{\mu_{t_0}^{i_0(N)}-\tilde\mu_{t_0-s_N}^{i_0(N)-j}}\right]+O(N^{-1/2}).$$
Again using Lemma \ref{lemma: ecart racine proche} and Lemma \ref{lemma: ecart racine proche derive un espace} we have that for all $j\in\N$:
\begin{equation}
\begin{split}
\label{eq:symmetrie k}
\frac{2}{\mu_{t_0}^{i_0(N)}-\tilde\mu_{t_0-s_N}^{i_0(N)+j}}&+\frac{2}{\mu_{t_0}^{i_0(N)}-\tilde\mu_{t_0-s_N}^{i_0(N)-j}}=\\
&\frac{2}{\mu_{t_0}^{i_0(N)}-\mu_{t_0-s_N}^{i_0(N)}-\frac{j}{2N\partial_\theta\phi(t_0-s_N,\mu_{t_0-s_N}^{i_0(N)})}}+\frac{2}{\mu_{t_0}^{i_0(N)}-\mu_{t_0-s_N}^{i_0(N)}+\frac{j}{2N\partial_\theta\phi(t_0-s_N,\mu_{t_0-s_N}^{i_0(N)})}}\\
&\lesssim  \cfrac{|\mu_{t_0}^{i_0(N)}-\mu_{t_0-s_N}^{i_0(N)}|}{j^2 N^{-2}}\\
&\lesssim \cfrac{N}{j^2}.
\end{split}
\end{equation}
It yields that:
$$\left|\tilde S_{1,2,N}\right|\lesssim \cfrac{1}{2N}\sum_{j\gtrsim \sqrt N}\cfrac{N}{j^2} +O(N^{-1/2})\lesssim O(N^{-1/2}).$$
Finally let us deal with the term $\tilde S_{1,1,N}$.
We compute using Lemma \ref{lemma: interlacing racine tilde et derive}, Lemma \ref{lemma: ecart racine proche} Lemma \ref{lemma: ecart racine proche derive un espace} and Lemma \ref{lemma: cardinality}
\begin{align*}
\left|\tilde S_{1,1,N}\right|=&\cfrac{1}{2N}\left|\sum_{j\in J_{1,N}-\{i_0(N)\}}\left[\cfrac{2}{\mu_{t_0}^{i_0(N)}-\mu_{t_0-s_N}^j}-\cfrac{2}{\mu_{t_0}^{i_0(N)}-\tilde\mu_{t_0-s_N}^j}\right]\right|\\
=&\cfrac{1}{N}\left|\sum_{j\in J_{1,N}-\{i_0(N)\}}\cfrac{\tilde\mu_{t_0-s_N}^j-\mu_{t_0-s_N}^j}{(\mu_{t_0}^{i_0(N)}-\mu_{t_0-s_N}^j)(\mu_{t_0}^{i_0(N)}-\tilde\mu_{t_0-s_N}^j)}\right|\\
&\lesssim \cfrac{1}{N}\sum_{j\in J_{1,N}-\{i_0(N)\}}\cfrac{|j-i_0(N)|N^{-2}}{(j-i_0(N))^2 N^{-2}}+\cfrac{|j-i_0(N)|^2 N^{-2}}{(j-i_0(N))^2 N^{-2}}\\
&\lesssim \cfrac{1}{N}\sum_{j\in J_{1,N}-\{i_0(N)\}}\left[\cfrac{1}{(j-i_0(N)}+1\right]\\
&\lesssim \cfrac{1}{N}\sum_{k\lesssim\sqrt{N}}\cfrac{1}{k}+\cfrac{|J_{1,N}|}{N}\\
&\lesssim \log(N) N^{-1}+N^{-1/2}=O(N^{-1/2})
\end{align*}
To summarize we proved that: 
$$ S_{1,N}-2\pi\partial_\theta\phi(t_0-s_N,\mu_{t_0-s_N}^{i_0(N)})\cotan\left(\pi\partial_\theta\phi(t_0-s_N,\mu_{t_0-s_N}^{i_0(N)})\cfrac{\mu_{t_0}^{i_0(N)}-\mu_{t_0-s_N}^{i_0(N)}}{s_N}\right)=O(N^{-1/2}).$$
This gives the result.
\end{proof}

\subsection*{Acknowledgments}
The first author acknowledges a partial support from the Chair FDD (ILB) and the Lagrange Mathematics and Computing Research Center. This work was partially funded by the ERC project PaDiESeM.

\renewcommand{\MR}[1]{}
\bibliographystyle{smfplain}
\bibliography{refpolynome}

\providecommand{\bysame}{\leavevmode ---\ }
\providecommand{\og}{``}
\providecommand{\fg}{''}
\providecommand{\smfandname}{\&}
\providecommand{\smfedsname}{\'eds.}
\providecommand{\smfedname}{\'ed.}
\providecommand{\smfmastersthesisname}{M\'emoire}
\providecommand{\smfphdthesisname}{Th\`ese}
\begin{thebibliography}{10}

\bibitem{angst2024almost}
{\scshape J.~Angst, D.~Malicet {\normalfont \smfandname} G.~Poly} -- {\og
  Almost sure behavior of the critical points of random polynomials\fg},
  \emph{Bulletin of the London Mathematical Society} \textbf{56} (2024), no.~2,
  p.~767--782.

\bibitem{arisawa2006}
{\scshape M.~Arisawa} -- {\og A new definition of viscosity solutions for a
  class of second-order degenerate elliptic integro-differential equations\fg},
  in \emph{Annales de l'Institut Henri Poincar{\'e} C, Analyse non
  lin{\'e}aire}, vol.~23, Elsevier, 2006, p.~695--711.

\bibitem{arisawa2008}
\bysame , {\og A remark on the definitions of viscosity solutions for the
  integro-differential equations with {L}{\'e}vy operators\fg}, \emph{Journal
  de math{\'e}matiques pures et appliqu{\'e}es} \textbf{89} (2008), no.~6,
  p.~567--574.

\bibitem{barles2008}
{\scshape G.~Barles {\normalfont \smfandname} C.~Imbert} -- {\og Second-order
  elliptic integro-differential equations: viscosity solutions' theory
  revisited\fg}, in \emph{Annales de l'IHP Analyse non lin{\'e}aire}, vol.~25,
  2008, p.~567--585.

\bibitem{barles1991convergence}
{\scshape G.~Barles {\normalfont \smfandname} P.~E. Souganidis} -- {\og
  Convergence of approximation schemes for fully nonlinear second order
  equations\fg}, \emph{Asymptotic analysis} \textbf{4} (1991), no.~3,
  p.~271--283.

\bibitem{bertucci2022spectral}
{\scshape C.~Bertucci, M.~Debbah, J.-M. Lasry {\normalfont \smfandname} P.-L.
  Lions} -- {\og A spectral dominance approach to large random matrices\fg},
  \emph{Journal de Math{\'e}matiques Pures et Appliqu{\'e}es} \textbf{164}
  (2022), p.~27--56.

\bibitem{bertucci2024spectral}
{\scshape C.~Bertucci, J.-M. Lasry {\normalfont \smfandname} P.-L. Lions} --
  {\og A spectral dominance approach to large random matrices: part ii\fg},
  \emph{Journal de Math{\'e}matiques Pures et Appliqu{\'e}es} \textbf{192}
  (2024), p.~103630.

\bibitem{bertucci2025new}
{\scshape C.~Bertucci {\normalfont \smfandname} V.~Pesce} -- {\og A new
  approach for the unitary dyson brownian motion through the theory of
  viscosity solutions\fg}, \emph{arXiv preprint arXiv:2504.16551} (2025).

\bibitem{bloch1932roots}
{\scshape A.~Bloch {\normalfont \smfandname} G.~P{\'o}lya} -- {\og On the roots
  of certain algebraic equations\fg}, \emph{Proceedings of the London
  Mathematical Society} \textbf{2} (1932), no.~1, p.~102--114.

\bibitem{crandall1992}
{\scshape M.~G. Crandall, H.~Ishii {\normalfont \smfandname} P.-L. Lions} --
  {\og User’s guide to viscosity solutions of second order partial
  differential equations\fg}, \emph{Bulletin of the American mathematical
  society} \textbf{27} (1992), no.~1, p.~1--67.

\bibitem{curgus2004contraction}
{\scshape B.~{\'C}urgus {\normalfont \smfandname} V.~Mascioni} -- {\og A
  contraction of the {L}ucas polygon\fg}, \emph{Proceedings of the American
  Mathematical Society} \textbf{132} (2004), no.~10, p.~2973--2981.

\bibitem{do2015real}
{\scshape Y.~Do, H.~Nguyen {\normalfont \smfandname} V.~Vu} -- {\og Real roots
  of random polynomials: expectation and repulsion\fg}, \emph{Proceedings of
  the London Mathematical Society} \textbf{111} (2015), no.~6, p.~1231--1260.

\bibitem{do2023random}
{\scshape Y.~Do, O.~Nguyen {\normalfont \smfandname} V.~Vu} -- {\og Random
  orthonormal polynomials: Local universality and expected number of real
  roots\fg}, \emph{Transactions of the American Mathematical Society}
  \textbf{376} (2023), no.~09, p.~6215--6243.

\bibitem{dunnage1966number}
{\scshape J.~Dunnage} -- {\og The number of real zeros of a random
  trigonometric polynomial\fg}, \emph{Proceedings of the London Mathematical
  Society} \textbf{3} (1966), no.~1, p.~53--84.

\bibitem{dyson1962}
{\scshape F.~J. Dyson} -- {\og A {B}rownian-motion model for the eigenvalues of
  a random matrix\fg}, \emph{Journal of Mathematical Physics} \textbf{3}
  (1962), no.~6, p.~1191--1198.

\bibitem{edelman1995many}
{\scshape A.~Edelman {\normalfont \smfandname} E.~Kostlan} -- {\og How many
  zeros of a random polynomial are real?\fg}, \emph{Bulletin of the American
  Mathematical Society} \textbf{32} (1995), no.~1, p.~1--37.

\bibitem{farmer2006crystallization}
{\scshape D.~W. Farmer {\normalfont \smfandname} M.~Yerrington} -- {\og
  Crystallization of random trigonometric polynomials\fg}, \emph{Journal of
  statistical physics} \textbf{123} (2006), p.~1219--1230.

\bibitem{galligo2024anti}
{\scshape A.~Galligo, J.~Najnudel {\normalfont \smfandname} T.~Vu} -- {\og
  Anti-concentration applied to roots of randomized derivatives of
  polynomials\fg}, \emph{Electronic Journal of Probability} \textbf{29} (2024),
  p.~1--20.

\bibitem{hall2023zeros}
{\scshape B.~C. Hall, C.-W. Ho, J.~Jalowy {\normalfont \smfandname}
  Z.~Kabluchko} -- {\og Zeros of random polynomials undergoing the heat
  flow\fg}, \emph{arXiv preprint arXiv:2308.11685} (2023).

\bibitem{hough2009zeros}
{\scshape J.~B. Hough, M.~Krishnapur, Y.~Peres {\normalfont et~al.}} --
  \emph{Zeros of gaussian analytic functions and determinantal point
  processes}, vol.~51, American Mathematical Soc., 2009.

\bibitem{jalowy2025zeros}
{\scshape J.~Jalowy, Z.~Kabluchko {\normalfont \smfandname} A.~Marynych} --
  {\og Zeros and exponential profiles of polynomials i: Limit distributions,
  finite free convolutions and repeated differentiation\fg}, \emph{arXiv
  preprint arXiv:2504.11593} (2025).

\bibitem{jalowy2025zeros2}
\bysame , {\og Zeros and exponential profiles of polynomials ii: Examples\fg},
  \emph{arXiv preprint arXiv:2509.11248} (2025).

\bibitem{kabluchko2015critical}
{\scshape Z.~Kabluchko} -- {\og Critical points of random polynomials with
  independent identically distributed roots\fg}, \emph{Proceedings of the
  American Mathematical Society} \textbf{143} (2015), no.~2, p.~695--702.

\bibitem{kabluchko2021repeated}
\bysame , {\og Repeated differentiation and free unitary poisson process\fg},
  \emph{arXiv preprint arXiv:2112.14729} (2021).

\bibitem{kabluchko2022lee}
\bysame , {\og Lee-yang zeroes of the {C}urie-{W}eiss ferromagnet, unitary
  {H}ermite polynomials, and the backward heat flow\fg}, \emph{arXiv preprint
  arXiv:2203.05533} (2022).

\bibitem{kac1943average}
{\scshape M.~Kac} -- {\og On the average number of real roots of a random
  algebraic equation\fg},  (1943).

\bibitem{kiselev2022}
{\scshape A.~Kiselev {\normalfont \smfandname} C.~Tan} -- {\og The flow of
  polynomial roots under differentiation\fg}, \emph{Annals of PDE} \textbf{8}
  (2022), no.~2, p.~16.

\bibitem{littlewood1938number}
{\scshape J.~E. Littlewood {\normalfont \smfandname} A.~C. Offord} -- {\og On
  the number of real roots of a random algebraic equation\fg}, \emph{Journal of
  the London Mathematical Society} \textbf{1} (1938), no.~4, p.~288--295.

\bibitem{michelen2024almost}
{\scshape M.~Michelen {\normalfont \smfandname} X.-T. Vu} -- {\og Almost sure
  behavior of the zeros of iterated derivatives of random polynomials\fg},
  \emph{Electronic Communications in Probability} \textbf{29} (2024), p.~1--10.

\bibitem{michelen2024zeros}
\bysame , {\og Zeros of a growing number of derivatives of random polynomials
  with independent roots\fg}, \emph{Proceedings of the American Mathematical
  Society} \textbf{152} (2024), no.~06, p.~2683--2696.

\bibitem{pemantle2013distribution}
{\scshape R.~Pemantle {\normalfont \smfandname} I.~Rivin} -- {\og The
  distribution of zeros of the derivative of a random polynomial\fg}, in
  \emph{Advances in Combinatorics: Waterloo Workshop in Computer Algebra, W80,
  May 26-29, 2011}, Springer, 2013, p.~259--273.

\bibitem{steinerberger2019nonlocal}
{\scshape S.~Steinerberger} -- {\og A nonlocal transport equation describing
  roots of polynomials under differentiation\fg}, \emph{Proceedings of the
  American Mathematical Society} \textbf{147} (2019), no.~11, p.~4733--4744.

\bibitem{stevens1969average}
{\scshape D.~C. Stevens} -- {\og The average number of real zeros of a random
  polynomial\fg}, \emph{Communications on Pure and Applied Mathematics}
  \textbf{22} (1969), no.~4, p.~457--477.

\bibitem{stoyanoff1925theoreme}
{\scshape A.~Stoyanoff} -- {\og Sur un th{\'e}or{\`e}me de {M}. {M}arcel
  {R}iesz\fg}, \emph{Nouvelles annales de math{\'e}matiques: journal des
  candidats aux {\'e}coles polytechnique et normale} \textbf{1} (1925),
  p.~97--99.

\bibitem{szeg1939orthogonal}
{\scshape G.~Szeg} -- \emph{Orthogonal polynomials}, vol.~23, American
  Mathematical Soc., 1939.

\end{thebibliography}
\end{document}